\newtheorem{thm}{Theorem}[section]
\newtheorem{lem}[thm]{Lemma}
\newtheorem{prop}[thm]{Proposition}
\newtheorem{cor}[thm]{Corollary}
\newtheorem{conj}[thm]{Conjecture}
\theoremstyle{definition}
\newtheorem{definition}[thm]{Definition}
\newtheorem{setting}[thm]{Setting}
\newtheorem{remark}[thm]{Remark}
\newtheorem{example}[thm]{Example}
\newtheorem{question}[thm]{Question}
\numberwithin{equation}{section}
\def\min{\mbox{\rm min}}
\def\max{\mbox{\rm max}}
\def\deg{\mbox{\rm deg}}
\def\het{\mbox{\rm ht}}
\def\Spec{\mbox{\rm Spec}}
\title{Rees algebras of ideals of star configurations}
\author{A. Costantini \thanks{University of California, Riverside, Riverside CA 92521, USA.  \emph{e-mail}: \texttt{alessanc@ucr.edu}}, B. Drabkin \thanks{Singapore University of Technology and Design, Singapore, Singapore \emph{e-mail}: \texttt{benjamin\_drabkin@sutd.edu.sg}}, L. Guerrieri \thanks{Jagiellonian University, Instytut Matematyki, 30-348 Krak\'{o}w, Poland. \emph{e-mail}: \texttt{lorenzo.guerrieri@uj.edu.pl}
}
}
\date{$\,$}
\begin{document}

\maketitle

\begin{abstract}
   In this article we study the defining ideal of Rees algebras of ideals of star configurations. We characterize when these ideals are of linear type and provide sufficient conditions for them to be of fiber type. In the case of star configurations of height two, we give a full description of the defining ideal of the Rees algebra, by explicitly identifying a minimal generating set.
\end{abstract}

\section{Introduction}
 
 Ideals of star configurations arise in algebraic geometry, in connection with the study of intersections of subvarieties or subschemes in a projective space. Given a family of hypersurfaces meeting properly in $\mathbb{P}^n$,  a \emph{star configuration of codimension $c$} is the union of all the codimension $c$ complete intersection subschemes obtained by intersecting $c$ of the hypersurfaces (see \cite{GHMN}). The terminology derives from the special case of ten points located at pairwise intersections of five lines in $\mathbb{P}^2$, with the lines positioned in the shape of a star. 
  
 From a commutative algebra perspective, ideals defining star configurations represent an interesting class, since a great amount of information is known about their free resolutions, Hilbert functions and symbolic powers (see for instance \cite{{GHM},{GHMN},{Galetto},{LinShenI},{Paolo},{shifted},{BisuiGHN},{TXie},{LinShenII}}). In this article we study their Rees algebras, about which little is currently known (see for instance \cite{{HHVladoiu},{GSimisT},{Nicklasson19},{BTXie}}).

 If $I = (g_1, \ldots, g_{\mu})$ is an ideal in a Noetherian ring $R$, the \emph{Rees algebra} of $I$ is the subalgebra 
    $$ \mathcal{R}(I) \coloneq R[It] = R[g_1 t, \dots, g_{\mu}t] \subseteq R[t]$$
 of the polynomial ring $R[t]$. In particular, $\,g_1 t, \dots, g_{\mu}t\,$ are $R$-algebra generators of $\mathcal{R}(I)$, and the algebraic structure of $\mathcal{R}(I)$ is classically understood by determining the ideal of relations among these generators. The latter is called the \emph{defining ideal} of the Rees algebra, and its generators are called the \emph{defining equations} of $\mathcal{R}(I)$. Geometrically, $\mathrm{Proj}(\mathcal{R}(I))$ is the blow-up of the affine scheme $X=\Spec(R)$ along the subscheme $V(I)$. 
  
 Determining the defining ideal of Rees algebras is usually difficult. Indeed, although the defining equations of degree one can be easily determined from a presentation matrix of the given ideal, a full understanding of the defining ideal of $\mathcal{R}(I)$ often requires prior knowledge of a free resolution of $I$ and of its powers $I^m$. 
 On the other hand, only a few classes of ideals have well-understood free resolutions, and usually a free resolution for $I$ does not provide information on the free resolutions of its powers $I^m$. Nevertheless, the problem becomes manageable if one imposes algebraic conditions on a presentation matrix of $I$ (see for instance \cite{{UVeqLinPres},{MU},{KPU}}), especially when one can exploit methods from algebraic combinatorics (see for instance \cite{{Villarreal},{HHVladoiu},{FouliLin},{twoBorel},{GSimisT},{HaMorey},{ALLin}}).

 The rich combinatorial structure of ideals of star configurations sometimes allows to deduce information on their Rees algebra. For instance, \emph{monomial star configurations}, which are constructed choosing the hypersurfaces to be coordinate hyperplanes in $\mathbb{P}^n$, are monomial ideals associated with discrete polymatroids, and hence are of fiber type by work of Herzog, Hibi and Vladoiu (see \cite[3.3]{HHVladoiu}). Recall that an ideal $\,I \subseteq R= K[x_1, \ldots, x_n]\,$ is said to be of \emph{fiber type} if the non-linear equations of the Rees algebra $\mathcal{R}(I)$ are given by the defining equations of the \emph{fiber cone} $\,F(I) \coloneq \mathcal{R}(I) \otimes_R K$. 
 
 In order to study \emph{linear star configurations}, which are constructed choosing arbitrary hyperplanes in $\mathbb{P}^{n}$, one can instead exploit the combinatorial properties of hyperplane arrangements. In particular, Garrousian, Simis and Toh\u{a}neanu proved that ideals of this kind of height two are of fiber type, and provided a (non-minimal) generating set for the defining ideal of their Rees algebra (see \cite[4.2 and 3.5]{GSimisT}). Similar results were obtained for linear star configurations of height three in $\mathbb{P}^2$ by Burity, Toh\u{a}neanu and Xie (see \cite[3.4 and 3.5]{BTXie}), who also conjectured that ideals of linear star configurations of any height are of fiber type.

 In this context, it is then natural to ask the following question. 
  
 \begin{question}
   \label{QuestionFiberType} \hypertarget{QuestionFiberType}{}
      Let $\mathcal{F} = \lbrace F_1, \ldots, F_t \rbrace$ be a family of homogeneous polynomials in $K[x_1, \ldots, x_n]$ and let $I_{c,\mathcal{F}}$ be the ideal of the star configuration of height $c$ obtained from the hypersurfaces defined by the $F_i$'s. Under what conditions on $\mathcal{F}$ is $I_{c,\mathcal{F}}$ of fiber type? 
 \end{question}
 
 Although in general ideals of star configurations may not be of fiber type, we show that this is always the case when the elements of $\mathcal{F}$ form a regular sequence. More precisely, our first main result (see \cref{powerstarconf} and \cref{degree2}) is the following.
 
 \begin{thm}
  \label{mainsec3} \hypertarget{mainsec3}{}
    Let $\mathcal{F} = \lbrace F_1, \ldots, F_t \rbrace$ be a homogeneous regular sequence in $K[x_1, \ldots, x_n]$. Let $I$ be the ideal of a star configuration of height $c \geq 2$ constructed on the hypersurfaces defined by the elements of $\mathcal{F}$. Then for any $m \geq 1$, $I^m$ is of fiber type. Moreover, the defining equations of the fiber cone $F(I^m)$ have degree at most two.  
 \end{thm}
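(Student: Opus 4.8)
The plan is to reduce the statement to the combinatorially tractable case of a monomial ideal by a flat base change, and then to invoke known facts about polymatroidal ideals and algebras of Veronese type. First I would record the standard description of the star configuration ideal attached to a regular sequence. Writing $d := t-c+1$, one has
\[
  I \;=\; \bigl(\, F_{i_1}\cdots F_{i_d} \;:\; 1\le i_1<\cdots<i_d\le t \,\bigr),
\]
so that $I = JR$, where $J\subseteq A := K[F_1,\dots,F_t]$ is the monomial ideal generated by all squarefree products of $d$ of the $F_i$. Since $F_1,\dots,F_t$ is a homogeneous regular sequence in the Cohen--Macaulay ring $R$, it extends to a homogeneous system of parameters $F_1,\dots,F_n$; hence $R$ is a finite free module over $K[F_1,\dots,F_n]$, the $F_i$ are algebraically independent over $K$, and $R$ is in particular a flat $A$-algebra. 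Thus $A$ is a genuine polynomial ring in $t$ variables, and $J$ is, after renaming variables, the matroidal ideal of the uniform matroid $U_{d,t}$.

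Next I would descend everything along the flat extension $A\hookrightarrow R$. Flatness yields $\mathcal{R}_R(I^m)\cong\mathcal{R}_A(J^m)\otimes_A R$ and, after reducing modulo the respective graded maximal ideals, $F(I^m)\cong F_A(J^m)$, for every $m\ge 1$; flatness also makes a minimal free resolution of $J^m$ over $A$ base-change to one of $I^m$ over $R$. Therefore the linear part of the defining ideal of $\mathcal{R}_R(I^m)$, and likewise the preimage in $R[T]$ of the fiber-cone relations, is obtained from the corresponding object over $A$ by extension of scalars. Consequently $I^m$ is of fiber type over $R$ if and only if $J^m$ is of fiber type over $A$, and the fiber cones $F(I^m)$ and $F_A(J^m)$ have defining ideals with generators in the same degrees; so it suffices to treat the monomial ideal $J$.

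For the monomial ideal $J$ I would argue as follows. The ideal $J$ is polymatroidal, hence so is every power $J^m$, and therefore $J^m$ is of fiber type by Herzog--Hibi--Vladoiu \cite{HHVladoiu}. For the degree bound it remains to understand the fiber cone $F_A(J^m)$, that is, the toric ring generated by the minimal monomial generators of $J^m$. Using the integer decomposition property of the $(d,t)$-hypersimplex (equivalently, the polymatroidal structure), $J^m$ is generated by exactly the monomials $F_1^{a_1}\cdots F_t^{a_t}$ with $\sum_{i=1}^{t} a_i = dm$ and $0\le a_i\le m$. Hence $F_A(J^m)$ is an algebra of Veronese type — the subalgebra of $K[F_1,\dots,F_t]$ generated by all monomials of degree $dm$ whose individual exponents are bounded by $m$ — and such algebras are known to have a quadratic (indeed Koszul) defining ideal. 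Combined with the descent of the previous paragraph, this gives both conclusions of the theorem.

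The main obstacle will be the toric statement in the last step: quadratic generation of toric rings of general polymatroids is White's conjecture, which is still open, so the proof must use the very special shape here — a dilated uniform matroid — to land inside the class of Veronese-type algebras where quadratic generation is known. A secondary point requiring care is the descent step, where one must check that being of fiber type and the degrees of the fiber-cone equations are genuinely preserved under $A\hookrightarrow R$; this is exactly where flatness of $R$ over $A$ is used, and it would fail for a mere module-finite extension, which is unavailable when $t<n$.
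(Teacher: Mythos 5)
Your argument is correct, but it is not the route the paper takes; in fact it is precisely the alternative proof that the authors acknowledge (and decline to follow) in the remark after \cref{degree2}. You reduce to the monomial case by observing that $I=JR$ for the squarefree Veronese ideal $J=I_{d,t}\subseteq A=K[F_1,\dots,F_t]$ and that $A\to R$ is flat (your Noether-normalization argument works, though flatness also follows directly from the local criterion, since the Koszul homology $H_1(F_1,\dots,F_t;R)$ vanishes for a regular sequence); you then quote Herzog--Hibi--Vladoiu for the fiber-type property of polymatroidal ideals and the De Negri / Herzog--Hibi sorting results for quadratic generation of the Veronese-type fiber cone, correctly noting that one must land in this special class rather than appeal to White's conjecture. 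The paper instead proves \cref{powerstarconf} and \cref{degree2} by a self-contained rewriting argument: the generators of $I^m$ are monomials in the $F_i$, the defining ideal of $\mathcal{R}(I^m)$ is spanned by the Taylor-type binomial relations $T_{\alpha,\beta}$ of \cref{Talphabeta}, and an explicit induction (\cref{assumption1} plus the divisibility bookkeeping in the two theorems) reduces every such relation to linear and quadratic fiber relations. Your approach buys brevity at the cost of importing Gr\"obner-basis and polymatroid machinery; the paper's approach is longer but elementary, avoids monomial orders entirely, and as a by-product gives a new proof of the known monomial case. The one step in your write-up worth making fully explicit is the descent of the fiber-type property: from $0\to\mathcal{J}_A\to A[T]\to\mathcal{R}_A(J^m)\to 0$ and flatness one gets $\mathcal{J}_R=\mathcal{J}_A\,R[T]$, and since $\mathcal{L}_R=\mathcal{L}_A R[T]$ and $F(I^m)\cong F_A(J^m)$ (the graded maximal ideal of $A$ maps into that of $R$), the decomposition $\mathcal{J}=\mathcal{L}+\mathcal{I}S$ transfers verbatim; as stated this is asserted rather than checked, but it is routine.
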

 
 Our key observation is that, under these assumptions, $I$ and its powers $I^m$ are generated by \emph{monomials in the $F_i$'s}, i.e. elements of the form $\, F_1^{i_1} \cdots F_t^{i_t}$. Hence, the defining ideal of the Rees algebra of $I^m$ can be deduced from its Taylor resolution (see \cite[Chapter IV]{Taylor}). This method was previously used by several authors to study Rees algebras of squarefree monomial ideals (see for instance \cite{{Villarreal},{FouliLin},{Kumar2},{HaMorey}}). 
 We remark that the content of \cref{mainsec3} was already known in the case when $\mathcal{F} = \lbrace x_1, \ldots, x_n \rbrace$ (see \cite[3.3]{HHVladoiu} and \cite[5.3(b)]{HHpolymatroids}). However, our proof is substantially different, since we only perform algebraic manipulations on the generators of $I^m$, while the proof of \cite[5.3(b)]{HHpolymatroids} heavily relies on the use of Gr\"obner bases and monomial orders (via results of De Negri \cite[2.5 and 2.6]{DeNegri}, see \cite[5.2 and 5.3]{HHpolymatroids}).

 In the light of \cref{mainsec3}, it is natural to explore how moving away from the assumption that the elements of $\,\mathcal{F} = \lbrace F_1, \ldots, F_t \rbrace\,$ form a regular sequence affects the structure of the defining ideal of the Rees algebra $\mathcal{R}(I_{c, \mathcal{F}})$. Our second main result (see \cref{Gs} and \cref{nonlineartypelocus}) shows that the length of a maximal regular sequence in the family $\mathcal{F}$ characterizes the linear-type property of ideals of linear star configurations. Recall that an ideal is said to be of \emph{linear type} if the defining ideal of its Rees algebra only consists of linear equations. 
 
 More deeply, when the $F_i$'s are all linear, in \cref{sregseq-minorsU} we characterize the regular sequences contained in $\mathcal{F}$ in terms of the matrix whose entries are the coefficients of the $F_i$'s. This turns out to be particularly useful in the case of linear star configurations of height two. Indeed, an ideal $I$ of this kind is perfect, hence it is classically known that the maximal minors of the \emph{Jacobian dual} of a Hilbert-Burch matrix of $I$ are defining equations of the Rees algebra $\mathcal{R}(I)$ (see \cref{sectionbackground} for the definition of Jacobian dual matrices). Thanks to \cref{sregseq-minorsU}, we can interpret the defining ideal described by Garrousian, Simis and Toh\u{a}neanu in \cite[4.2 and 3.5]{GSimisT} in terms of the associated primes of the ideal of maximal minors of the Jacobian dual. More precisely, our main result (see \cref{orlikterao} and \cref{intersection}) is the following.
 
 \begin{thm}
   \label{mainsec6} \hypertarget{mainsec6}{}
     Let $I$ be the ideal of a linear star configuration of height two. Then, the defining ideal of the Rees algebra $\mathcal{R}(I)$ is $\mathcal{L}+ \mathcal{P}$, where $\mathcal{L}$ consists of linear equations and (under mild assumptions) $\mathcal{P}$ is the only associated prime of the ideal of maximal minors of a Jacobian dual for $I$ that is not generated by monomials. 
  \end{thm}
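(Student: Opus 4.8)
The plan is to funnel the whole computation into one primary decomposition. Write $S=K[x_1,\dots,x_n,T_1,\dots,T_\mu]$ for the ambient polynomial ring of $\mathcal{R}(I)$, with $\mathbf{x}=x_1,\dots,x_n$ and $\mathbf{T}=T_1,\dots,T_\mu$, and let $\mathcal{J}$ be the defining ideal of $\mathcal{R}(I)$. Since $I$ is the ideal of a linear star configuration of height two it is a perfect ideal of height two and admits a Hilbert--Burch matrix $\varphi$ with linear entries; as recalled in \cref{sectionbackground}, the Jacobian dual $B=B(\varphi)$ is a matrix of linear forms in the $T_i$ with $\mathbf{T}\cdot\varphi=\mathbf{x}\cdot B$, so $\mathcal{L}=(\mathbf{T}\cdot\varphi)$ is the linear part of $\mathcal{J}$ and the ideal $I_{\max}(B)\subseteq K[\mathbf{T}]$ of maximal minors of $B$ is contained in $\mathcal{J}$. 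The first step is to promote this to an equality $\mathcal{J}=\mathcal{L}+I_{\max}(B)$. For this I would invoke Garrousian--Simis--Toh\u{a}neanu \cite{GSimisT}: an ideal of this kind is of fiber type, so the nonlinear part of $\mathcal{J}$ is the extension of the defining ideal $\mathcal{P}:=\mathcal{J}\cap K[\mathbf{T}]$ of the fiber cone $F(I)$; comparing their explicit generators of $\mathcal{P}$ with the maximal minors of $B$ --- using \cref{sregseq-minorsU} to read the entries of $B$ off the coefficient matrix of the $F_i$'s --- shows that $\mathcal{L}+I_{\max}(B)$ and $\mathcal{L}+\mathcal{P}$ already coincide, and both equal $\mathcal{J}$. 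This identification, together with the fact that $F(I)$ is isomorphic to the Orlik--Terao algebra of the underlying arrangement and hence a domain (so that $\mathcal{P}$ is prime), is what I would package as \cref{orlikterao}.

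Since $\mathcal{P}$ is a prime containing $I_{\max}(B)$ (indeed $I_{\max}(B)\subseteq\mathcal{J}\cap K[\mathbf{T}]=\mathcal{P}$), the equality $\mathcal{J}=\mathcal{L}+\mathcal{P}$ is then in hand, and everything reduces to showing that $\mathcal{P}$ is \emph{precisely} the non-monomial part of $\mathrm{Ass}(I_{\max}(B))$: that $\mathcal{P}\in\mathrm{Ass}(I_{\max}(B))$ and that every other associated prime of $I_{\max}(B)$ is generated by monomials in the $T_i$. This is the heart of the matter, and the place where the combinatorics of the arrangement enters. The plan is to exhibit an explicit primary decomposition $I_{\max}(B)=\mathcal{P}\cap Q_1\cap\dots\cap Q_r$: the candidate extra components are located by asking on which coordinate subspaces $V(T_i:i\in\Sigma)\subseteq\mathbb{A}^\mu$ the matrix $B$ drops rank for a ``combinatorial'' rather than a ``generic'' reason, which by \cref{sregseq-minorsU} happens exactly along those subfamilies $\{F_i:i\notin\Sigma\}$ that contain no regular sequence of the expected length; one then checks that the corresponding primary ideals $Q_j$ are supported on such coordinate subspaces --- hence generated by monomials --- and that $\mathcal{P}\cap\bigcap_j Q_j$ equals $I_{\max}(B)$ exactly, not merely up to radical. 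I would state this as \cref{intersection}. Combined with the previous paragraph it yields $\mathcal{J}=\mathcal{L}+I_{\max}(B)=\mathcal{L}+\mathcal{P}$ with $\mathcal{P}$ the unique associated prime of $I_{\max}(B)$ not generated by monomials.

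The ``mild assumptions'' are exactly what makes $\mathcal{P}$ a genuine, nonzero, non-monomial prime. When $\mathcal{F}$ is a regular sequence, $I$ is governed by \cref{mainsec3}, the map defining $F(I)$ becomes birational onto $\mathbb{P}^{\mu-1}$, one has $\mathcal{P}=0$ (or $I_{\max}(B)$ is itself monomial), and the statement degenerates; one therefore assumes $\mathcal{F}$ is not a regular sequence --- equivalently, by \cref{sregseq-minorsU}, that the coefficient matrix of the $F_i$'s has rank strictly less than $|\mathcal{F}|$ --- together with the standing hypothesis that the defining hyperplanes are pairwise distinct. Under these hypotheses $\mathcal{P}\neq 0$, and the circuit relations of the arrangement exhibit $\mathcal{P}$ as non-monomial.

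I expect the primary decomposition to be the main obstacle: proving that $I_{\max}(B)$ has no minimal or embedded non-monomial prime other than $\mathcal{P}$, and that the candidate intersection is exactly $I_{\max}(B)$ and not merely an ideal with the same radical. Determinantal ideals of matrices of linear forms seldom admit such clean primary decompositions, and here it hinges on faithfully translating the rank behaviour of $B(\varphi)$ into the matroid-theoretic data of the arrangement through \cref{sregseq-minorsU}. A secondary, more bookkeeping, difficulty is pinning down the sharp form of the ``mild assumptions'' and the small configurations they must exclude.
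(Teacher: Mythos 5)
Your overall architecture (use \cite{GSimisT} to identify the nonlinear part of $\mathcal{J}$ with the fiber-cone ideal, then exhibit a primary decomposition of the ideal of maximal minors of the Jacobian dual whose only non-monomial component is that prime) matches the paper's, but your first step contains a genuine error. The claim $\mathcal{J}=\mathcal{L}+I_{\max}(B)$ is false precisely in the cases this theorem is designed to cover, namely when $I_{2,\mathcal{F}}$ fails the $G_n$ condition. A bidegree count shows $(\mathcal{L}+I_{\max}(B))\cap K[\mathbf{T}]=I_{\max}(B)$, so the asserted equality would force $I_{\max}(B)=\mathcal{P}$; but the paper computes a minimal generating set $\{m_{\Theta}\}$ of $I_{\max}(B)$ (\cref{minorsJacDual}) and shows each $m_{\Theta}$ factors as a squarefree monomial in the $T_i$ times an irreducible polynomial $h_{\Theta}$ (\cref{irreduciblefactor}); when $G_n$ fails some of these monomial factors are nontrivial, and then $I_{\max}(B)\subsetneq \mathcal{P}=(h_{\Theta})$ --- see the explicit example after \cref{idealP}, where $\mathcal{P}=(m_6,m_3,m_2,h_1)\supsetneq I_4(B)$. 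So ``comparing the explicit generators of $\mathcal{P}$ with the maximal minors of $B$'' does not show they coincide; it shows they differ by exactly these monomial factors, and the real content of \cref{orlikterao} is the reverse inclusion: every Garrousian--Simis--Toh\u{a}neanu generator $\partial D$ lies in $(h_{\Theta})$, which the paper proves by an induction on $r$ together with an explicit Cramer's-rule identity expressing $\partial D$ as a $K[\mathbf{T}]$-combination of the $h_{\Theta_i}$. Your plan contains no substitute for this step, and without it the identification of $\mathcal{P}$ with an ideal built from $I_{\max}(B)$ (hence with an associated prime of $I_{\max}(B)$) does not get off the ground.

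Two smaller points. First, the ``mild assumptions'' are not ``$\mathcal{F}$ is not a regular sequence'': \cref{intersection} assumes $m_{\Theta}\neq 0$ for every $\Theta$, equivalently (\cref{assumption}) that every $h\times(h+1)$ submatrix of the coefficient matrix $U$ has maximal rank, i.e.\ $I_{\max}(B)$ lies in no monomial prime of height $<r$; this is weaker than $G_n$ but much stronger than what you propose, and the case $\mathcal{F}$ a regular sequence is already excluded because then $t=n$ and there is no Jacobian-dual statement to make. Second, your description of the monomial components is right in spirit (they correspond via \cref{sregseq-minorsU} and \cref{monomialprimes} to vanishing minors of $U$), but proving the intersection equals $I_{\max}(B)$ on the nose relies on the lemmas showing that the various $h_{\Theta}$ attached to a common vanishing minor are associates (\cref{associated}), which again presupposes the explicit generators $m_{\Theta}$ and their factorizations rather than abstract rank considerations.
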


 The proof of \cref{mainsec6} proceeds in three steps. First, we identify a minimal generating set for the ideal of maximal minors of a Jacobian dual for $I$ (see \cref{minorsJacDual}). Next, we prove that suitable irreducible factors of these minimal generators span all the non-linear equations described in \cite{GSimisT} (see \cref{orlikterao}), hence they are the minimal non-linear equations of the Rees algebra. Finally, using \cref{sregseq-minorsU} and under mild assumptions on the matrix of coefficients of the $F_i$'s, we provide a primary decomposition of the ideal of maximal minors of the Jacobian dual and show that $\mathcal{P}$ satisfies the required property (see \cref{intersection}). Our approach is entirely algebraic, combining linear algebra with divisibility arguments. As a biproduct, it allows us to identify the degrees of each non-linear generator of the defining ideal of $\mathcal{R}(I)$ (see \cref{degreesOT}), which was not obvious from the combinatorial arguments appearing in the proof of \cite[3.5]{GSimisT}.
 
 We remark that our methods can be potentially extended to ideals of linear star configurations of height greater than two, since the notion of Jacobian dual matrix is defined with no restriction on the height of the ideal under examination (see \cite[p. 191]{VasconcelosJac}). Moreover, it seems reasonable to believe that the defining ideal of the fiber cone can be determined from the associated primes of the ideal of maximal minors of the Jacobian dual matrix in other cases as well. In fact, the defining ideal of the fiber cone is sometimes known to coincide with the radical of the ideal of maximal minors of a Jacobian dual. This is, for instance, the case for certain equigenerated ideals with a linear presentation (including perfect ideals of height three that are linearly presented, see \cite[7.1, 7.2 and 7.4]{KPU}), or certain equigenerated homogeneous ideals of arbitrary height (see \cite[3.5]{HHVladoiu}).

 \bigskip 
 
 We now describe how this article is structured. 
 
 In \cref{sectionbackground} we collect the necessary background on ideals of star configurations and Rees algebras that we will use throughout. In
 \cref{sectionregular} we study the Rees algebras of (powers of) ideals of star configurations defined with respect to a regular sequence $\,\mathcal{F}= \lbrace  F_1, \ldots, F_t \rbrace$. The main results of this section are \cref{powerstarconf} and \cref{degree2}.
 
 \cref{sectionLinearType} is devoted to the linear type property of ideals of star configurations. In particular, \cref{nonlineartypelocus} provides a criterion for such an ideal to be of linear type. In the case of linear star configurations, we also fully characterize when these ideals are of linear type locally up to a certain height (see \cref{Gs} and \cref{nonlineartypelocus}). 
 
 In the remaining sections we focus on linear star configurations of height two. In \cref{sectionJacobiandual} we construct a minimal generating set for the ideal of maximal minors of their Jacobian dual, which we exploit in \cref{sectionheight2} in order to characterize the defining ideal of the Rees algebra of linear star configurations of height two (see \cref{orlikterao} and \cref{intersection}).

\section{Background}
 \label{sectionbackground} \hypertarget{sectionbackground}{}

 In this section we collect the necessary background information on ideals of star configurations and Rees algebras. 
 
\subsection{Star configurations}
  
 Throughout this article $R=K[x_1, \ldots, x_n]\,$ denotes a standard graded polynomial ring over a field $K$ and $\,\mathcal{F}= \lbrace  F_1, \ldots, F_t \rbrace\,$ denotes a set of homogeneous elements in $R$. For any integer $c\,$ with $\,1 \leq c \leq t$, let
   $$ I_{c,\mathcal{F}}= \bigcap_{1 \leq i_1 < \ldots < i_c \leq t} (F_{i_1}, \ldots, F_{i_c}), $$
   
 \begin{definition}
   If $\,1 \leq c \leq \mathrm{min}\{n,t\}$ and any subset of $\mathcal{F}$ of cardinality $c+1$ is a regular sequence, then $\, I_{c,\mathcal{F}} \,$ is called the \emph{ideal of the star configuration of height $c$ on the set $\mathcal{F}$}. 
    
   We say that $\, I_{c,\mathcal{F}} \,$ defines a \emph{linear star configuration} if in addition all the $F_i$ are homogeneous of degree one, and a \emph{monomial star configuration} if $\, \mathcal{F} = \lbrace x_1, \ldots, x_n \rbrace$.
 \end{definition}
 
 The following proposition summarizes useful results about ideals of star configurations.  
 
 \begin{prop}
   \label{generatorsI} \hypertarget{generatorsI}{}
      Let $I_{c,\mathcal{F}}$ be the ideal of the star configuration of height $c$ on the set $\,\mathcal{F} = \lbrace  F_1, \ldots, F_t \rbrace$. For $s \leq t$, denote $\, \displaystyle{J_{s,\mathcal{F}} = \sum_{1 \leq i_1 < \ldots < i_s \leq t} (F_{i_1} \cdots F_{i_s})}$. Then:
     \begin{enumerate}[\rm(1)]
         \item {\rm(\cite[2.3]{GHMN})} $\, \displaystyle{\, I_{c,\mathcal{F}} =  J_{t-c+1,\mathcal{F}} }$. In particular, the minimal number of generators of $I_{c,\mathcal{F}}$ is $\,\mu(I_{c,\mathcal{F}}) = \binom{t}{t-c+1} \geq t,\,$ and $\,\mu(I_{c,\mathcal{F}}) = t\,$ if $\,c=2$.
         \item {\rm (\cite[3.3 and 3.5]{GHMN})} $\,I_{c,\mathcal{F}}\,$ is a perfect ideal and has a linear resolution.
        \item {\rm (\cite[2.2 and 3.1]{BTXie} and \cite[3.1, 5.2 and 5.3]{ConcaH})} $\,$If the $F_i$'s are all linear or $\, \mathcal{F}= \lbrace x_1, \ldots, x_n \rbrace$, then $\,I_{c,\mathcal{F}}\,$ has linear powers (i.e.$\!$ for every $m$, $\,I^m_{c,\mathcal{F}}\,$ has a linear resolution). 
     \end{enumerate}
 \end{prop}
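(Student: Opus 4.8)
The plan is to assemble the three assertions from the cited literature, indicating the mechanism behind each; all of them ultimately rest on the hypothesis that every subset of $\mathcal{F}$ of cardinality $c+1$ is a regular sequence, equivalently that each ideal $(F_{i_1},\ldots,F_{i_c})$ occurring in the defining intersection is a complete intersection of height $c$.

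For (1), I would first dispatch the elementary inclusion $J_{t-c+1,\mathcal{F}}\subseteq I_{c,\mathcal{F}}$: given a generator $F_{j_1}\cdots F_{j_{t-c+1}}$ of $J_{t-c+1,\mathcal{F}}$ and any ideal $(F_{i_1},\ldots,F_{i_c})$ in the intersection, the index sets $\{j_1,\ldots,j_{t-c+1}\}$ and $\{i_1,\ldots,i_c\}$ have cardinalities summing to $t+1$, hence they meet, so some $F_{j_k}$ lies in $(F_{i_1},\ldots,F_{i_c})$ and so does the product. The reverse inclusion $I_{c,\mathcal{F}}\subseteq J_{t-c+1,\mathcal{F}}$ is the substance of \cite[2.3]{GHMN}; I would reprove it by induction on $t$, peeling off the form $F_t$ and using that the regular-sequence hypothesis is inherited both modulo $F_t$ and after inverting $F_t$, so that the relevant intersections and colon ideals of the complete intersections behave as expected. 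These $\binom{t}{t-c+1}$ products generate $I_{c,\mathcal{F}}$, and they form a minimal generating set: this can be read off the explicit resolution of part (2), or verified directly by a degree and divisibility argument. Finally $\binom{t}{t-c+1}=\binom{t}{c-1}\ge t$ whenever $c\ge 2$, with value exactly $t$ when $c=2$.

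For (2), I would invoke \cite[3.3 and 3.5]{GHMN}: there one shows $\het I_{c,\mathcal{F}}=c$, so that perfection amounts to the Cohen--Macaulayness of $R/I_{c,\mathcal{F}}$, and constructs an explicit minimal free resolution (for instance by iterated linkage starting from the complete intersections $(F_{i_1},\ldots,F_{i_c})$), from which the asserted linear shape of the resolution is immediate. For (3) I would treat the two cases of the hypothesis separately: when the $F_i$ are linear, \cite[2.2 and 3.1]{BTXie} expresses $I_{c,\mathcal{F}}$, up to a degree shift, in terms of products of ideals generated by linear forms, and the results of Conca--Herzog on such products \cite[3.1, 5.2 and 5.3]{ConcaH} then yield that all powers have a linear resolution; when $\mathcal{F}=\{x_1,\ldots,x_n\}$ the ideal $I_{c,\mathcal{F}}$ is polymatroidal, and polymatroidal ideals are known to have linear powers.

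The main obstacle is the reverse inclusion in (1): upgrading the set-theoretic identity to an equality of ideals genuinely uses the regular-sequence hypothesis, since the pigeonhole argument above only gives one containment, and the induction requires some care to check that the hypothesis descends both to $R/(F_t)$ and to the localization and that no superfluous generators are introduced along the way. Establishing minimality of the generating set is a secondary but related technical point.
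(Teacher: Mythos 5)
Your proposal is correct and takes essentially the same approach as the paper: Proposition~\ref{generatorsI} is a background statement that the paper does not prove but simply attributes to the cited references, exactly as you do. Your supplementary details --- the pigeonhole argument for the inclusion $J_{t-c+1,\mathcal{F}}\subseteq I_{c,\mathcal{F}}$ and the identity $\binom{t}{t-c+1}=\binom{t}{c-1}\ge t$ for $2\le c\le t$, with equality when $c=2$ --- are accurate.
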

 
   

\subsection{Rees algebras}
 
 Although some of the definitions included in this subsection make sense over any Noetherian ring and for any ideal, we assume throughout that $R \coloneq K[x_1, \ldots, x_n]$ is a polynomial ring over a field $K$ and that $\,I=(g_1, \ldots, g_{\mu})\,$ is a graded ideal of $R$. The \emph{Rees algebra} of $I$ is the subalgebra 
   $$ \mathcal{R}(I) \coloneq \bigoplus_{i \geq 0} I^i t^i \subseteq R[t]$$
 of the polynomial ring $R[t]$, where $I^0 \coloneq R$. Notice that $\, \mathcal{R}(I) = R[g_1 t, \dots, g_{\mu}t] \,$ is a graded ring (with grading inherited from that of $R[t]$) and there is a natural graded epimorphism 
     $$  \varphi \colon S \coloneq R[T_1, \ldots, T_{\mu}] \twoheadrightarrow \mathcal{R}(I) ,$$
 defined by $\,\varphi(T_i) = g_i t\,$ for all $\,1 \leq i \leq {\mu}$. The ideal $\mathcal{J} \coloneq \mathrm{ker}(\varphi)$ is called the \emph{defining ideal} of the Rees algebra $\mathcal{R}(I)$, and the generators of $\mathcal{J}$ are called the \emph{defining equations} of $\mathcal{R}(I)$.  Notice that $\,\displaystyle{\mathcal{J} = \bigoplus_{s \geq 0} \mathcal{J}_s}\,$ is a graded ideal (in the $T_i$ variables) of the polynomial ring $S$.
 
 The linear equations of $\mathcal{R}(I)$ can be easily determined from a presentation matrix of $I$. Specifically, if 
    $$ R^s \stackrel{M}{\longrightarrow} R^{\mu} \longrightarrow I \to 0 $$
 is a presentation of $I$, then $\mathcal{J}_1 = (\lambda_1, \ldots, \lambda_s)$, where the $\lambda_i$'s are homogeneous linear polynomials in $S$ satisfying the matrix equation
   $$ [\lambda_1, \ldots, \lambda_s] = [T_1, \ldots, T_{\mu}] \cdot M. $$
 We denote $\mathcal{J}_1$ by $\mathcal{L}$. The ideal $I$ is said to be \emph{of linear type} if $\mathcal{J} = \mathcal{L}$. 
 
 Given an arbitrary ideal $I$, one should expect that the defining ideal of the Rees algebra $\mathcal{R}(I)$ also contains non-linear equations. The latter are usually difficult to determine, however if the $g_i$ all have the same degree, the non-linear equations can sometimes be identified by analyzing the \emph{fiber cone} of $I$, which is defined as
   $$ F(I) \coloneq \mathcal{R}(I) \otimes_R K = K[g_1, \dots, g_{\mu}] \cong \frac{K[T_1, \ldots, T_{\mu}]}{\mathcal{I}} .$$
 \noindent Indeed, by construction one always has $\,\mathcal{L} + \mathcal{I}S \subseteq \mathcal{J},\,$ and the ideal $I$ is called of \emph{fiber type} when the latter inclusion is an equality. In fact, the rings $S$ and $\mathcal{R}(I)$ can be given natural structures of bigraded $K$-algebras by setting $\deg (T_i) = (0,1)$ and $\deg (x_i) = (d_1, 0)$, where $d_i$ is the degree of $x_i$ in $R$. Then, $\,\displaystyle{\mathcal{J} = \bigoplus_{i,j \geq 0} \mathcal{J}_{(i,j)}}\,$ is a bigraded ideal and $I$ is of fiber type if and only if the defining ideal of $\mathcal{R}(I)$ is 
   $$ \mathcal{J} =  [\mathcal{J}]_{(\ast,1)} + [\mathcal{J}]_{(0,\ast)}.$$

 When $I$ is a perfect ideal of height two, one can best exploit the information contained in a Hilbert-Burch presentation matrix $M$ of $I$ through the notion of a Jacobian dual matrix, which was first introduced in \cite{VasconcelosJac}. Recall that the \emph{Jacobian dual} $B(M)$ of $M$ is an $ n \times (\mu -1)$ matrix with coefficients in $S$, satisfying the equation
   $$ [x_1, \ldots, x_n] \cdot B(M) = [T_1, \ldots, T_{\mu}] \cdot M. $$
 
 A priori one could find several possible Jacobian duals associated with a given presentation $M$, however $B(M)$ is uniquely determined in the case when the entries of $M$ are linear polynomials in $R=K[x_1, \ldots, x_n]$.  Moreover, if $\,s = \max \{ n, \mu -1 \} \,$ and $I_s (B(M))$ denotes the ideal of maximal minors of $B(M)$, then the defining ideal $\mathcal{J}$ of $\mathcal{R}(I)$ always satisfies the inclusion 
  \begin{equation}
    \label{eqJacdual} \hypertarget{eqJacdual}{}
      \mathcal{L} + I_s (B(M)) \subseteq \mathcal{J}
  \end{equation}

 Notice that, if $M$ has linear entries, then the entries of $B(M)$ are in $K[T_1, \ldots, T_{\mu}]$. Hence, the images of the generators of $I_s (B(M))$ in the fiber cone $F(I)$ are in the defining ideal $\mathcal{I}$ of $F(I)$. Although the inclusion in \cref{eqJacdual} is usually strict, \cref{morey-ulrich} below provides an instance when equality holds. Before we state the theorem we need to recall the following definition, which we will use often throughout this article. 
 
 \begin{definition}
   \label{defGs} \hypertarget{defGs}{}
   An ideal $I$ satisfies the \emph{$G_s$ condition} if $\mu(I_{\mathfrak{p}}) \leq \dim R_{\mathfrak{p}}$ for every $\mathfrak{p} \in V(I)$ of height at most $s-1$. Equivalently, if $M$ is any presentation matrix of $I$, then $I$ satisfies the $G_s$ condition if and only if for every $\,1 \leq i \leq s-1$, $\,\het (I_{\mu(I)-i}(M)) \geq i+1$.
   
   \noindent Moreover, $I$ is said to satisfy $G_{\infty}$ if it satisfies the $G_s$ condition for every integer $s$.
 \end{definition}

\begin{remark}
  \label{lintypeGinfty} \hypertarget{lintypeGinfty}{}
  Ideals of linear type always satisfy $G_{\infty}$.
\end{remark}
 Under the assumption that $I$ satisfies $G_n$, the Rees algebra of $I$ is described by the following result of Morey and Ulrich. Recall that the Krull dimension of the fiber cone $F(I)$ is called the \emph{analytic spread} of $I$ and is denoted with $\ell(I)$.
 \begin{thm}[{\rm \cite[1.3]{MU}}] 
  \label{morey-ulrich} \hypertarget{morey-ulrich}{}
   Let $R=K[x_1, \ldots, x_n]$ and assume that $K$ is infinite. Let $I \subseteq R$ be a linearly presented perfect ideal of height 2 with $\mu(I) \geq n+1$ and assume that $I$ satisfies $G_n$. Let $M$ be a Hilbert-Burch matrix resolving $I$, then the defining ideal of $\mathcal{R}(I)$ is 
     $$\mathcal{J}= \mathcal{L}+I_n(B(M))$$ 
   where $B(M)$ is the Jacobian dual of $M$. Moreover, $\mathcal{R}(I)$ and $F(I)$ are Cohen-Macaulay, $I$ is of fiber type and $\ell(I)=n$.  
 \end{thm}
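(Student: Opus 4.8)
The plan is to prove the nontrivial inclusion $\mathcal{J} \subseteq \mathcal{L} + I_n(B(M))$, the reverse one being \cref{eqJacdual}. Write $S = R[T_1,\dots,T_\mu]$ with $\mu = \mu(I)$, so that $\dim S = n+\mu$, and recall that $\mathcal{R}(I) = S/\mathcal{J}$ is a domain of dimension $\dim R + 1 = n+1$, whence $\het\mathcal{J} = \mu - 1$; also $\mu - 1 \ge n$ by hypothesis. Two preliminary observations organize the argument. First, since the Hilbert--Burch matrix $M$ has linear entries, each generator of $\mathcal{L}$ lies in $(x_1,\dots,x_n)S$, and comparing the two sides of $[x_1,\dots,x_n]\cdot B(M) = [T_1,\dots,T_\mu]\cdot M$ shows that $B(M)$ is the unique $n\times(\mu-1)$ matrix whose entries are linear forms in $K[T_1,\dots,T_\mu]$. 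Second, once the equality $\mathcal{J} = \mathcal{L} + I_n(B(M))$ is known, reducing modulo $(x_1,\dots,x_n)S$ kills $\mathcal{L}$ and fixes $I_n(B(M))$, so $F(I) = K[T_1,\dots,T_\mu]/I_n(B(M))$ and the fiber-type assertion is immediate; the Cohen--Macaulayness of $F(I)$ and the value of $\ell(I)$ will then follow from a determinantal analysis of this quotient.

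That determinantal analysis I would carry out first. Since $B(M)$ is $n\times(\mu-1)$ with $\mu-1\ge n$, the generic (determinantal) bound gives $\het I_n(B(M)) \le \mu - n$. The $G_n$ hypothesis, in the form of the height inequalities $\het I_{\mu-i}(M)\ge i+1$ for $1\le i\le n-1$ of \cref{defGs}, transported across the identity $[x]\,B(M) = [T]\,M$, yields the reverse inequality, so $\het I_n(B(M)) = \mu - n$ exactly. Hence $I_n(B(M))$ is a perfect determinantal ideal of the expected grade, resolved by the Eagon--Northcott complex, and $K[T_1,\dots,T_\mu]/I_n(B(M))$ is Cohen--Macaulay of dimension $n$. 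Combined with the first paragraph this will give, once the main equality is in hand, that $F(I)$ is Cohen--Macaulay with $\ell(I) = \dim F(I) = n$.

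The heart of the proof is the Rees algebra itself, where I would invoke the approximation-complex machinery for ideals of small codimension (the circle of ideas of Herzog--Simis--Vasconcelos and Morey--Ulrich). One forms the complex over $S$ assembled from the Koszul complex on $x_1,\dots,x_n$ and the columns of $B(M)$ --- whose zeroth homology is $S/(\mathcal{L}+I_n(B(M)))$ --- and shows that it is acyclic and that it resolves $\mathcal{R}(I)$. Acyclicity is verified by the Buchsbaum--Eisenbud criterion, equivalently the Peskine--Szpiro acyclicity lemma, which demands precisely lower bounds on the grades of the ideals of minors of $B(M)$; these are exactly the bounds extracted from $G_n$ in the previous step. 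Once the complex is known to be acyclic and to compute $\mathcal{R}(I)$, one obtains simultaneously $\mathcal{J} = \mathcal{L}+I_n(B(M))$ and that $\mathcal{R}(I)$ is a perfect $S$-module of grade $\mu-1$, hence Cohen--Macaulay. After this, fiber type and $F(I) = K[T]/I_n(B(M))$ follow as in the first paragraph, and the Cohen--Macaulayness of $F(I)$ and $\ell(I)=n$ from the second.

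If one prefers to decouple the homological input, an alternative for the inclusion $\mathcal{J}\subseteq\mathcal{L}+I_n(B(M))$ is to first prove $S/(\mathcal{L}+I_n(B(M)))$ Cohen--Macaulay, hence unmixed, of dimension $n+1$ via the acyclicity argument, and then establish the equality by a local check: for $\mathfrak{p}\in V(I)$ of height at most $n-1$ the $G_n$ condition forces $\mu(I_\mathfrak{p})\le\dim R_\mathfrak{p}$, so $I_\mathfrak{p}$ is a perfect grade-two, linearly presented ideal satisfying $G_\infty$, and is therefore of linear type; thus $\mathcal{J}$ and $\mathcal{L}+I_n(B(M))$ agree away from $V(x_1,\dots,x_n)$, and an unmixedness and dimension count (using $\het I_n(B(M)) = \mu-n$ to rule out any spurious component containing $(x_1,\dots,x_n)S$) closes the gap. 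Either way, the main obstacle is the acyclicity step: choosing the correct complex and squeezing from $G_n$ exactly the grade inequalities the acyclicity lemma requires is the delicate point, whereas the determinantal computations and the localization bookkeeping are routine.
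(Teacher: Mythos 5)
This statement is not proved in the paper at all: it is quoted verbatim as \cite[1.3]{MU} and used as a black box (the only inclusion the paper itself establishes is $\mathcal{L}+I_n(B(M))\subseteq\mathcal{J}$, recorded in \cref{eqJacdual}). So there is no in-paper proof to compare yours against; what you have written is a reconstruction of the Morey--Ulrich argument, and it should be judged on those terms.

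As such a reconstruction, your outline has the right architecture --- in fact your ``alternative'' final paragraph (Cohen--Macaulayness, hence unmixedness, of $S/(\mathcal{L}+I_n(B(M)))$, plus the observation that $G_n$ forces $I_\mathfrak{p}$ to be of linear type for $\mathfrak{p}$ of small height, so that $\mathcal{J}/\mathcal{L}$ is supported on $V((x_1,\dots,x_n))$ and is then killed by an unmixedness and height count) is closer to the actual strategy of \cite{MU} and \cite{UVeqLinPres} than the first route. But two steps you rely on are asserted rather than proved, and they are precisely the hard ones. First, the claim that the inequalities $\het I_{\mu-i}(M)\ge i+1$ ``transported across the identity'' $[x]\,B(M)=[T]\,M$ give $\het I_n(B(M))\ge \mu-n$ is not a formal consequence of that identity: the minors of $M$ live in $R$ and those of $B(M)$ in $K[T_1,\dots,T_\mu]$, and relating their heights requires an argument about the dimension of $\mathrm{Sym}(I)$ (or of the special fiber), not just a transposition of matrices. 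Second, the acyclic complex over $S$ with zeroth homology $S/(\mathcal{L}+I_n(B(M)))$ whose existence you invoke is not the plain Eagon--Northcott complex of $B(M)$ tensored with a Koszul complex; constructing it and verifying the Buchsbaum--Eisenbud grade conditions from $G_n$ is the substance of the Morey--Ulrich and Ulrich--Vasconcelos papers. As written, your proposal is therefore a correct plan with the decisive lemmas left open, which is a reasonable way to treat a theorem the paper itself imports from the literature, but it is not a self-contained proof.
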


 In \cref{sectionheight2} we will also use some results on Rees algebras of ideals generated by $a$-fold products of linear forms from \cite{GSimisT}, which we briefly recall here. Let $\,\mathcal{F} = \lbrace  F_1, \ldots, F_t \rbrace$ be a set of homogeneous linear polynomials in $R=K[x_1, \ldots, x_n]$ and suppose that $t \geq n$. 
 Notice that each $F_i$ defines a line through the origin in $\mathbb{P}^{n-1}$, hence the set $\mathcal{F}$ defines a \emph{central hyperplane arrangement} in $\mathbb{P}^{n-1}$. A \emph{linear dependency} among $s$ of the given linear forms is a relation of the form
 \begin{equation}
   \label{dependency} \hypertarget{dependency}{}
     D \colon c_{i_1} F_{i_1} + \ldots + c_{i_s} F_{i_s} = 0.
 \end{equation}
 Given a linear dependency $D$, one can define the following homogeneous polynomial in $S=R[T_1, \ldots, T_{\mu}]$
 \begin{equation}
    \label{deltadependency} \hypertarget{deltadependency}{}
     \partial D \colon \sum_{j=1}^s c_{i_j} \prod_{j \neq k=1}^s T_{i_k} .
 \end{equation}
 
 The following result of Garrousian, Simis and Toh\u{a}neanu relates the $\partial D$'s to the fiber cone and Rees algebra of ideals generated by $(t-1)$-fold products of linear forms.
 \begin{thm} 
    \label{garrousian-simis-tohaneanu} \hypertarget{garrousian-simis-tohaneanu}{}
   With the notation above, let $\, \displaystyle{ I = \sum_{1 \leq i_1 < \ldots < i_{t-1} \leq t} (F_{i_1} \cdots F_{i_{t-1}}) }$. Then,
    \begin{enumerate}[\rm(1)]
        \item {\rm (\cite[4.2]{GSimisT})} $I$ is of fiber type.
        \item {\rm (\cite[3.5]{GSimisT})} The defining ideal of the fiber cone $F(I)$ is generated (possibly not minimally) by all elements of the form $\partial D$ as in \cref{deltadependency}, where $D$ varies within the set of linear dependencies among any $\,t-1$ of the $F_i$'s. 
        \item {\rm (\cite[4.9 and 4.10]{GSimisT})} $\mathcal{R}(I)$ and $F(I)$ are Cohen-Macaulay.
    \end{enumerate}
 \end{thm}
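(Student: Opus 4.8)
\medskip

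\noindent\textit{Proof proposal.} These are results of Garrousian--Simis--Toh\u{a}neanu; here is how I would organize a proof. Set $f_i = \prod_{j\neq i}F_j$ and $\Phi = \prod_{k=1}^t F_k$, so that $I = (f_1,\dots,f_t)$ and $F_if_i = \Phi$ for every $i$; by \cref{generatorsI}, $I$ is perfect of height two with linear resolution, minimally generated by the $f_i$. Two families of relations are visible immediately: the linear Rees relations $F_iT_i - F_jT_j \in \mathcal L$ coming from $F_if_i = F_jf_j$, and, for each linear dependency $D\colon \sum_j c_{i_j}F_{i_j}=0$, the element $\partial D$, since substituting $T_i\mapsto f_it$ into $\partial D$ yields, after factoring, a nonzero multiple of $\sum_j c_{i_j}F_{i_j}=0$. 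So $\mathcal L + (\text{all }\partial D)\subseteq \mathcal J$ for free; the content of the theorem is the reverse inclusion together with Cohen--Macaulayness.

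Part (2) I would handle by identifying the fiber cone. Since $f_i = \Phi/F_i$, dividing the parametrization $T_i\mapsto f_i$ through by $\Phi$ shows that a homogeneous form $\sum_{|\alpha|=d}c_\alpha T^\alpha$ vanishes on the $f_i$ if and only if $\Phi^d\sum_\alpha c_\alpha\prod_iF_i^{-\alpha_i}=0$, i.e.\ if and only if it vanishes on the $F_i^{-1}$; hence $F(I)$ is isomorphic, as a standard-graded $K$-algebra, to the Orlik--Terao algebra $K[F_1^{-1},\dots,F_t^{-1}]\subseteq K(x_1,\dots,x_n)$ of the arrangement $\mathcal F$. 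The defining ideal of the Orlik--Terao algebra is classically generated by the relations obtained from the linear dependencies of $\mathcal F$ by clearing denominators, and clearing denominators in $D$ reproduces exactly $\partial D$; so the $\partial D$'s generate $\mathcal I$. If one prefers to avoid citing the Orlik--Terao presentation, $\mathcal I$ can be computed by elimination: a nonzero relation among the $f_i$ of least $T$-degree must, after cancelling the powers of $\Phi$, come from a minimal dependency, off which one divides one factor of $F$ at a time.

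For parts (1) and (3) the plan is to combine part (2) with Cohen--Macaulayness. By \cref{generatorsI}(3) the powers $I^m$ all have linear resolutions; feeding these, together with the exact sequences $0\to I^{m+1}\to I^m\to I^m/I^{m+1}\to 0$, into a depth computation, and using that the Orlik--Terao algebra $F(I)$ is Cohen--Macaulay, one shows $\mathcal R(I)$ is Cohen--Macaulay of dimension $n+1$. Now $S/(\mathcal L + \mathcal I S)$ surjects onto $\mathcal R(I)$; matching the bigraded Hilbert series of the two sides — the left one read off from the presentations of $\mathcal L$ and $\mathcal I$, the right one from $\bigoplus_m I^mt^m$ and the linear resolutions of the $I^m$ — forces the surjection to be an isomorphism, which is fiber type and transports Cohen--Macaulayness to $\mathcal R(I)$ and $F(I)$. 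When $I$ moreover satisfies the $G_n$ condition — which, by a local analysis at the primes $(F_i,F_j)\in V(I)$ and their overprimes, holds whenever the $F_i$ are in linear general position (though it genuinely fails for some linear star configurations, e.g.\ five linear forms in four variables, four of which are linearly dependent while every three are independent) — one may bypass this and invoke \cref{morey-ulrich} directly, taking $M$ to be the bidiagonal Hilbert--Burch matrix whose columns record the syzygies $F_if_i = F_{i+1}f_{i+1}$, so that $\mathcal J = \mathcal L + I_n(B(M))$.

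The main obstacle is the depth/Hilbert-series argument in the third paragraph in full generality: because $G_n$ can fail, \cref{morey-ulrich} is not available across the board, and one must independently establish that $\mathcal R(I)$ is Cohen--Macaulay and that the two Hilbert series agree. By comparison, the identification of $F(I)$ with the Orlik--Terao algebra and the resulting generation of $\mathcal I$ by the $\partial D$'s in part (2) is essentially formal once the grading bookkeeping is set up.
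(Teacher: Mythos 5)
This statement is quoted background: the paper attributes each part to \cite{GSimisT} and supplies no proof of its own, so there is no in-paper argument to compare against; what can be assessed is whether your sketch would independently establish the result. Your treatment of part (2) is sound and is in fact the route taken in \cite{GSimisT}: since $f_i=\Phi/F_i$, a degree-$d$ form evaluated at the $f_i$ differs from its evaluation at the $F_i^{-1}$ by the nonzero factor $\Phi^d$, so $F(I)$ is the Orlik--Terao algebra of the arrangement, and the generation of its defining ideal by the $\partial D$'s is the known Orlik--Terao/Proudfoot--Speyer presentation (a genuine theorem you are citing, not a formality, but a legitimate citation). Your verification that $\mathcal L+(\partial D)\subseteq\mathcal J$ also checks out.

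The gap is in parts (1) and (3), and you partly acknowledge it. The Hilbert-series plan is circular as stated: to read off the bigraded Hilbert series of $S/(\mathcal L+\mathcal I S)$ you would need a resolution or at least depth information for that quotient, which is essentially the content of the theorem; and ``linear powers plus the exact sequences $0\to I^{m+1}\to I^m\to I^m/I^{m+1}\to 0$'' does not by itself yield Cohen--Macaulayness of $\mathcal R(I)$ (there is no general implication of that shape). The argument that actually closes this in \cite{GSimisT} is different: one shows $\mathcal L+\mathcal I S$ is a \emph{prime} ideal with $\dim S/(\mathcal L+\mathcal I S)=n+1=\dim\mathcal R(I)$, whence the surjection $S/(\mathcal L+\mathcal I S)\twoheadrightarrow\mathcal R(I)$ of affine domains of equal dimension is an isomorphism (a surjection of a domain onto a quotient of the same dimension has zero kernel); Cohen--Macaulayness of $F(I)$ is then the Proudfoot--Speyer theorem for Orlik--Terao algebras and is inherited by $\mathcal R(I)$ via their degeneration argument. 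Your fallback via \cref{morey-ulrich} is fine but, as you note, only covers the $G_n$ case, which genuinely excludes configurations the theorem covers. So the proposal correctly isolates where the difficulty lies but does not supply the missing step for (1) and (3).
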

 
 

\section{Star configurations on a regular sequence} 
  \label{sectionregular} \hypertarget{sectionregular}{}
  
  In this section we assume the following setting.

 \begin{setting}
  \label{settingregular} \hypertarget{settingregular}{}
    Let $R=K[X_1, \ldots, X_n]\,$ be a polynomial ring over a field $K$ and let $\,\mathcal{F}= \lbrace  F_1, \ldots, F_t \rbrace\,$ be a homogeneous regular sequence in $R$. For $\,1 \leq c \leq \mathrm{min}\{n,t\}$, let
      $$ I= I_{c,\mathcal{F}}= \bigcap_{1 \leq i_1 < \ldots < i_c \leq t} (F_{i_1}, \ldots, F_{i_c}) $$
    be the ideal of the star configuration of height $c$ on the set $\mathcal{F}$. For a fixed $m \geq 1$, let $\,g_1, \ldots, g_{\mu}$ be a minimal generating set for the $m$-th power $I^m$ of $I$, and write $S=R[T_1, \ldots, T_{\mu}]$.
 \end{setting}

 Since the elements of $\mathcal{F}$ form a regular sequence, from \cref{generatorsI} it follows that the $I$ is minimally generated by \emph{monomials in the $F_i$}, i.e. products $\,F_1^{i_1} \ldots F_{n}^{i_n}\,$ for some integers $i_j \geq 0$. Notice that the exponents $i_j$ are uniquely determined because the $F_i$'s form a regular sequence. Moreover, since every $m$-th power $I^m$ is minimally generated by $m$-fold products of minimal generators of $I$, with the assumptions and notations of \cref{settingregular} we may also assume that $\,g_1, \ldots, g_{\mu\,}$ are monomials in the $F_i$'s. 
 
 The defining ideal of $\, \mathcal{R}(I^m)$ can then be determined from the Taylor resolution of $I^m$ (see \cite[Chapter IV]{Taylor}). More precisely, for $\,1 \leq s \leq t\,$ let 
   $$\mathcal{I}_{s} = \lbrace \alpha =(i_1, \ldots, i_s) \, | \, 1 \leq i_1 \leq \ldots \leq i_s \leq t \rbrace .$$
 For each $\,\alpha \in \mathcal{I}_{s}\,$ denote  $\,g_{\alpha}= g_{i_1} \cdots g_{i_s},\,$ $\,T_{\alpha}= T_{i_1} \cdots T_{i_s}\,$ and 
   \begin{equation}
     \label{Talphabeta} \hypertarget{Talphabeta}{}
       T_{\alpha, \beta} = \frac{g_{\beta}}{\gcd(g_{\alpha}, g_{\beta})} \,T_{\alpha}- \frac{g_{\alpha}}{\gcd(g_{\alpha}, g_{\beta})}\, T_{\beta}.
   \end{equation}
 Then,  the defining ideal of $\,\mathcal{R}(I^{m\,})$ is
   $$ \mathcal{J} = \mathcal{J}_1 + \bigcup_{s=2}^{\infty} \mathcal{J}_s,  $$
 where $\, \displaystyle{\mathcal{J}_s = \lbrace T_{\alpha, \beta} \, | \, \alpha, \beta \in \mathcal{I}_{s} \,  \rbrace} \,$ for every $s \geq 2\,$.
   

 Since $\mathcal{R}(I^{m\,})$ is Noetherian, there exists an $N \geq 2$ so that $\,\displaystyle{\mathcal{J}_s \subseteq \sum_{i=1}^N \mathcal{J}_i}\,$ for all $s\geq N$. In order to estimate such an $N$, the first step is to identify redundant relations, which we do in the following lemma. To simplify the notation, for multi-indices $\alpha = (i_1, \cdots , i_s)$ and  $\beta= (j_1,\cdots, j_s)$, denote $\,\theta= \displaystyle{ \frac{g_{\beta}}{\gcd(g_{\alpha}, g_{\beta})}}\,$ and $\,\delta= \displaystyle{ \frac{g_{\alpha}}{\gcd(g_{\beta}, g_{\alpha})}}$. Then, \cref{Talphabeta} can be rewritten as
  \begin{equation}
    \label{relation} \hypertarget{relation}{}
    T_{\alpha,\beta} =  \theta\, T_{i_1}\cdots T_{i_s} - \delta\, T_{j_1}\cdots T_{j_s} 
  \end{equation}

 \begin{lem}
  \label{assumption1} \hypertarget{assumption1}{}
    With the notation above, let $\,\displaystyle{\theta_1 \coloneq \frac{g_{j_1}}{\gcd(g_{i_1},g_{j_1})}}$. Assume that, up to reordering the indices $i_k$ and $j_k$ in \cref{relation}, $\theta_1$  divides $\theta$. Then, for $s \geq 2$ the relation in \cref{relation} can be expressed as an $S$-linear combination of relations in $\mathcal{J}$ of degree at most $s-1$.  
 \end{lem}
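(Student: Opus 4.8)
The plan is to reduce the degree-$s$ binomial $T_{\alpha,\beta}$ by a single Gr\"obner-style ``variable swap''. Since $F_1,\dots,F_t$ is a regular sequence (so the $F_i$ are algebraically independent over $K$), each $g_k$ is a monomial in the $F_i$ with a well-defined exponent vector in $\mathbb{Z}_{\ge 0}^t$, and all the products, $\gcd$'s and $\operatorname{lcm}$'s appearing below are again such monomials; divisibility among them can be checked coordinatewise, with $\gcd$ and $\operatorname{lcm}$ taken as componentwise $\min$ and $\max$. After the relabelling allowed in the statement I may assume that $i_1,j_1$ are the first entries of $\alpha,\beta$ and that $\theta_1=g_{j_1}/\gcd(g_{i_1},g_{j_1})$ divides $\theta$. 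I then introduce the multi-index $\gamma\in\mathcal{I}_s$ obtained from $\alpha$ by replacing $i_1$ with $j_1$ (reordering if needed, which affects neither $T_\gamma$ nor $g_\gamma$), and write $\alpha'=(i_2,\dots,i_s)$, $\beta'=(j_2,\dots,j_s)\in\mathcal{I}_{s-1}$.

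First I would record two factorizations. As $\alpha$ and $\gamma$ agree outside the first slot, $\gcd(g_\alpha,g_\gamma)=g_{\alpha'}\gcd(g_{i_1},g_{j_1})$, so with $\delta_1:=g_{i_1}/\gcd(g_{i_1},g_{j_1})$,
\[
T_{\alpha,\gamma}=\theta_1 T_\alpha-\delta_1 T_\gamma=(T_{i_2}\cdots T_{i_s})\,\ell,\qquad \ell:=\theta_1 T_{i_1}-\delta_1 T_{j_1},
\]
and $\ell\in\mathcal{J}_1=\mathcal{L}$ because $\theta_1 g_{i_1}=\delta_1 g_{j_1}=\operatorname{lcm}(g_{i_1},g_{j_1})$. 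Likewise $g_\gamma=g_{j_1}g_{\alpha'}$ and $g_\beta=g_{j_1}g_{\beta'}$ give $\gcd(g_\gamma,g_\beta)=g_{j_1}\gcd(g_{\alpha'},g_{\beta'})$, hence $T_{\gamma,\beta}=T_{j_1}\,T_{\alpha',\beta'}$ with $T_{\alpha',\beta'}\in\mathcal{J}_{s-1}$. Thus $T_{\alpha,\gamma}$ is an $S$-multiple of the degree-$1$ relation $\ell$, and $T_{\gamma,\beta}$ is an $S$-multiple of the degree-$(s-1)$ relation $T_{\alpha',\beta'}$; since $s\ge 2$, both are relations in $\mathcal{J}$ of degree $\le s-1$.

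Next I would glue the three binomials together. Setting $L=\operatorname{lcm}(g_\alpha,g_\beta)$, $L'=\operatorname{lcm}(g_\alpha,g_\gamma)$, $L''=\operatorname{lcm}(g_\gamma,g_\beta)$, $p=L''/\gcd(L',L'')$ and $q=L'/\gcd(L',L'')$, so that $pL'=qL''=\operatorname{lcm}(L',L'')$, and using $\theta_1=L'/g_\alpha$, $\delta_1=L'/g_\gamma$ together with the analogous expressions for the coefficients of $T_{\gamma,\beta}$, the $T_\gamma$ terms cancel and
\[
p\,T_{\alpha,\gamma}+q\,T_{\gamma,\beta}=\frac{\operatorname{lcm}(L',L'')}{g_\alpha}\,T_\alpha-\frac{\operatorname{lcm}(L',L'')}{g_\beta}\,T_\beta .
\]
One always has $L\mid\operatorname{lcm}(L',L'')$ (because $g_\alpha\mid L'$ and $g_\beta\mid L''$); if moreover $\operatorname{lcm}(L',L'')=L$, the right-hand side is exactly $T_{\alpha,\beta}$, and so
\[
T_{\alpha,\beta}=\bigl(p\,T_{i_2}\cdots T_{i_s}\bigr)\,\ell+\bigl(q\,T_{j_1}\bigr)\,T_{\alpha',\beta'}
\]
is the desired $S$-linear combination of relations in $\mathcal{J}$ of degree $\le s-1$.

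The crux — and, I expect, the main obstacle — is the equality $\operatorname{lcm}(L',L'')=L$, which is precisely where the hypothesis $\theta_1\mid\theta$ is spent. I would verify it coordinatewise in $\mathbb{Z}_{\ge 0}^t$: on a fixed coordinate, let $u,w$ be the entries there of the exponent vectors of $g_{i_1},g_{j_1}$, and $A$ (resp.\ $B$) the sum of the corresponding entries of those of $g_{i_2},\dots,g_{i_s}$ (resp.\ $g_{j_2},\dots,g_{j_s}$). Then on that coordinate the exponent vectors of $\theta_1$ and $\theta$ have entries $\max(0,w-u)$ and $\max\bigl(0,(w+B)-(u+A)\bigr)$, while those of $\operatorname{lcm}(L',L'')$ and of $L$ have entries $\max(A+u,\,A+w,\,w+B)$ and $\max(u+A,\,w+B)$. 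Hence $\operatorname{lcm}(L',L'')=L$ amounts to $A+w\le\max(u+A,w+B)$ in every coordinate, and a short case split on whether $w\le u$ shows this is equivalent to $\max(0,w-u)\le\max\bigl(0,(w+B)-(u+A)\bigr)$ in every coordinate, i.e.\ to $\theta_1\mid\theta$. Everything else is routine bookkeeping; the conceptual content is simply the recognition that $\theta_1\mid\theta$ is exactly the condition making the swap reduction return $T_{\alpha,\beta}$ on the nose rather than a proper multiple of it.
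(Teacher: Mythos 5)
Your proof is correct and, after unwinding, is the paper's argument in different clothing: your coefficient $p$ equals $\theta'=\theta/\theta_1$ and $q\,T_{\alpha',\beta'}$ equals the paper's remainder term $-(\delta\,T_{j_2}\cdots T_{j_s}-\delta_1\theta'\,T_{i_2}\cdots T_{i_s})$, so your final decomposition $T_{\alpha,\beta}=p\,T_{i_2}\cdots T_{i_s}\,\ell+q\,T_{j_1}T_{\alpha',\beta'}$ coincides with the identity the paper writes down directly. The only difference is presentational: the paper verifies the identity by a one-line computation and spends the hypothesis $\theta_1\mid\theta$ to make $\theta'$ an honest element of $R$, whereas you package the same divisibility as the lcm condition $\operatorname{lcm}(L',L'')=L$ in an S-pair-style reduction through the intermediate index $\gamma$.
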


 \begin{proof}
   Write $\theta = \theta_1 \theta'$ and define $\,\displaystyle{\delta_1 \coloneq \frac{g_{i_1}}{\gcd(g_{i_1},g_{j_1})}}$. Notice that $\theta_1 T_{i_1} - \delta_1T_{j_1} \in \mathcal{J}_1$. Using this linear relation we can rewrite
     $$ \theta \,T_{i_1}\cdots T_{i_s} - \delta \,T_{j_1}\cdots T_{j_s} = \theta'\,T_{i_2}\cdots T_{i_s}(\theta_1 T_{i_1} - \delta_1T_{j_1}) - T_{j_1}( \delta \,T_{j_2}\cdots T_{j_s} -\delta_1 \theta'\,T_{i_2}\cdots T_{i_s}).$$
   Now, the term $\,\theta_1 T_{i_1} - \delta_1T_{j_1}\,$ is clearly in $\mathcal{J}_1$ and the term $\,\delta \,T_{j_2}\cdots T_{j_s} -\delta_1 \theta'\,T_{i_2}\cdots T_{i_s}\,$ is in $\mathcal{J}_{s-1}$. Indeed, observe that
     $$ \theta' g_{i_2}\cdots g_{i_s} = \frac{\theta g_{\alpha} }{\theta_1g_{i_1}} =  \frac{\delta g_{\beta}}{\delta_1g_{j_1}} = \frac{\delta g_{j_2}\cdots g_{j_s}}{\delta_1}, $$ 
   and therefore $\, \delta_1 \theta' g_{i_2}\cdots g_{i_s} = \delta g_{j_2}\cdots g_{j_s}. $ 
 \end{proof}
 
 Notice that, whenever there exists a grading so that the $F_i \in \mathcal{F}$ have the same degree, then $I$ is generated by elements of the same degree $\alpha(I)$.
 Then, each power $I^m$ is generated by all the monomials in the $F_i$'s of degree $\,\alpha(I)m\,$ that are not divisible by $F^{m+1}$ for any $F \in \mathcal{F}$. Moreover, the fiber cone $F(I^m)$ is isomorphic to the \emph{toric ring} of $\,\lbrace g_1, \ldots, g_{\mu} \rbrace$. In particular, its defining ideal is generated by all binomials of the form $\, T_{\alpha} - T_{\beta}\, $ so that $\, g_{\alpha} = g_{\beta}$ (see \cite[Proposition 10.1.1]{HH} for a proof).

 \begin{thm}
  \label{powerstarconf} \hypertarget{powerstarconf}{}
   With the assumptions and notations of \cref{settingregular}, assume also that $I$ is generated by elements of the same degree with respect to some $\mathbb{Z}_{> 0}$-grading. Then, the ideal $I^m$ is of fiber type.
 \end{thm}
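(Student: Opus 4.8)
The plan is to show that every relation $T_{\alpha,\beta} \in \mathcal{J}_s$ with $s \geq 2$ can be written as an $S$-linear combination of linear relations in $\mathcal{L} = \mathcal{J}_1$ and relations in the toric ideal $\mathcal{I}$ of $F(I^m)$ (pulled back to $S$). Since $I^m$ is generated by monomials $g_1, \dots, g_\mu$ in the $F_i$'s, and since the $F_i$'s form a regular sequence, the monoid of exponent vectors behaves exactly like a monomial monoid: $\gcd$, divisibility, and factorization of the $g_i$'s are governed entirely by the exponent vectors in the $F_i$'s. This means the combinatorics of the Taylor complex presentation applies verbatim, and the only task is to reduce the degree in the $T$-variables down to the fiber-type bound.

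First I would set up the reduction: given a binomial relation $T_{\alpha,\beta} = \theta\, T_{i_1}\cdots T_{i_s} - \delta\, T_{j_1}\cdots T_{j_s}$ as in \cref{relation}, I want to peel off one variable at a time. The key dichotomy is whether, after suitable reordering, the divisibility hypothesis of \cref{assumption1} holds for some pairing of an $i_k$ with a $j_\ell$ — i.e. whether $\theta_1 = g_{j_1}/\gcd(g_{i_1},g_{j_1})$ divides $\theta$. If it does, \cref{assumption1} immediately rewrites the relation in terms of a linear relation and a relation of degree $s-1$, and induction on $s$ finishes. So the heart of the matter is the remaining case: when no such pairing exists. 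I would argue that in that case the monomials $g_\alpha$ and $g_\beta$ must actually be \emph{equal} (or at least that $T_{\alpha,\beta}$ is, up to sign and rescaling, a pure toric binomial $T_\alpha - T_\beta$ with $g_\alpha = g_\beta$), which by the description of the defining ideal of $F(I^m)$ as generated by binomials $T_\alpha - T_\beta$ with $g_\alpha = g_\beta$ (cited from \cite[Proposition 10.1.1]{HH}) places $T_{\alpha,\beta}$ in $\mathcal{I}S$. Concretely: since $I^m$ is equigenerated, each $g_i$ has the same $\mathcal{F}$-degree; if $g_\alpha \neq g_\beta$ then $\theta \neq \delta$ are nontrivial, and I would show that the failure of every divisibility relation $\theta_1 \mid \theta$ forces a contradiction by a counting/exponent argument on the regular sequence — essentially, one of the $g_{j_\ell}$ must have an $F$-exponent exceeding what $\theta$ can supply, contradicting that $\theta\, g_\alpha = \delta\, g_\beta$ with all $g$'s built from the same set of $F_i$'s.

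The cleanest way to organize this is probably: (i) pass to the exponent monoid $\mathbb{Z}_{\geq 0}^t$ via $g_i \mapsto$ its $\mathcal{F}$-exponent vector $\mathbf{a}_i$, so that everything reduces to a statement about monomial ideals / affine monoids; (ii) observe the equigeneratedness gives $\mathbf{a}_i \cdot \mathbf{d} = \alpha(I)m$ for the degree vector $\mathbf{d}$ of $\mathcal{F}$; (iii) run the standard argument (as used for squarefree monomial ideals in the references \cite{Villarreal,FouliLin,HaMorey}) showing the Taylor relations of degree $\geq 2$ are generated in degrees $\leq 2$ modulo $\mathcal{L}$, together with the toric relations. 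Then translate back. I expect the main obstacle to be the no-divisibility case — making precise why it forces $g_\alpha = g_\beta$ and hence lands in the fiber cone ideal — and in particular handling multiplicities correctly when the same generator $g_i$ is repeated in the multi-index $\alpha$ or $\beta$ (the multi-indices are weakly increasing, so repetitions are allowed). One must be careful that "reordering the indices $i_k$ and $j_k$" in the hypothesis of \cref{assumption1} is always available, and that after the inductive peeling the resulting degree-$(s-1)$ relation is again a genuine Taylor relation $T_{\alpha',\beta'}$ (or a combination of such), not just some random binomial; this bookkeeping, while routine, is where sign and $\gcd$ errors creep in. Once the reduction to degree $2$ modulo $\mathcal{L}$ is done, noting that degree-$2$ relations either reduce further via \cref{assumption1} or are toric binomials gives exactly $\mathcal{J} = \mathcal{L} + \mathcal{I}S$, i.e. $I^m$ is of fiber type.
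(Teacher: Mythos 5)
Your setup (reduce via \cref{assumption1}, induct on $s$, land in the toric ideal) matches the paper's, but the central claim you rely on in the remaining case is false. You assert that if no reordering of the indices makes the divisibility hypothesis $\theta_1 \mid \theta$ of \cref{assumption1} hold, then a counting argument on exponents forces $g_\alpha = g_\beta$, so that $T_{\alpha,\beta}$ is already a toric binomial. Here is a counterexample inside \cref{settingregular}: take $\mathcal{F}=\{F_1,\dots,F_6\}$ a regular sequence, $c=4$ (so $I=J_{3,\mathcal{F}}$ is generated by the squarefree cubics in the $F_i$), $m=2$, $s=2$, and the generators of $I^2$
\[
g_{i_1}=F_3F_4F_5^2F_6^2,\quad g_{i_2}=F_1F_2F_5^2F_6^2,\quad g_{j_1}=F_1^2F_3^2F_5F_6,\quad g_{j_2}=F_2^2F_4^2F_5F_6 .
\]
Then $g_\alpha=(1,1,1,1,4,4)\neq g_\beta=(2,2,2,2,2,2)$ as exponent vectors, $\theta=F_1F_2F_3F_4$ and $\delta=F_5^2F_6^2$, yet for each of the four pairings $(i_k,j_\ell)$ the quantity $g_{j_\ell}/\gcd(g_{i_k},g_{j_\ell})$ contains the square of some $F_p$ with $p\le 4$ and hence does not divide $\theta$ (and symmetrically $g_{i_k}/\gcd(g_{i_k},g_{j_\ell})$ contains some $F_p$, $p\le4$, hence does not divide $\delta$). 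So the relation is neither toric nor reducible by \cref{assumption1} in any ordering, and your "contradiction by exponent counting" cannot exist.

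What is actually needed in this case --- and what constitutes the real content of the paper's proof --- is an \emph{exchange move}: one shows (this is where the genuine counting argument lives) that there exist forms $G_1,H_1\in\mathcal{F}$ and generators $g_{h_1},g_{h_2}$ of $I^m$ with $g_{h_1}g_{h_2}=g_{j_1}g_{j_2}$, so that subtracting the monomial multiple $\delta\, T_{j_3}\cdots T_{j_s}(T_{h_1}T_{h_2}-T_{j_1}T_{j_2})$ of a \emph{quadratic toric relation} replaces $\beta$ by a new multi-index $\gamma$ for which the obstruction $\theta_1$ drops by a factor $G_1$; iterating finitely many times produces a Taylor relation that does satisfy the hypothesis of \cref{assumption1}. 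In your counterexample one such step replaces $g_{j_1},g_{j_2}$ by $g_{h_1}=F_1F_2F_3^2F_4^2$-type rebalanced generators, and the process terminates. Without this exchange step (or an equivalent sorting argument à la \cite{HHpolymatroids}, which you allude to but do not carry out), the induction does not close, so the proposal as written does not prove the theorem.
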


 \begin{proof}
 We need to prove that all the relations of the form described in \cref{relation} can be expressed as an $S$-linear combination of linear relations and fiber-type relations. Working by induction on $s$, it suffices to show that any relation $\,\displaystyle{T_{\alpha,\beta} =  \theta\, T_{i_1}\cdots T_{i_s} - \delta \,T_{j_1}\cdots T_{j_s}}\,$ of degree $s \geq 2$ that is not of fiber type can be expressed as an $S$-linear combination of relations of smaller degree in the $T_i$ variables. 
   
 Since $T_{\alpha,\beta}$ is not of fiber type, there exists a form $F_1 \in \mathcal{F}$ so that $F_1$ divides $\theta$. Moreover, since for any $F \in \mathcal{F}$ we know that $F^{m+1}$ cannot divide any generator of $I^m$, by possibly relabeling the indices, we may assume that $F_1$ divides $g_{j_1}$ and $F_1^m$ does not divide $g_{i_1}$.
 As in \cref{assumption1}, let $\,\displaystyle{ \theta_1 \coloneq \frac{g_{j_1}}{\gcd(g_{i_1},g_{j_1})}}$. If $\theta_1$ divides $\theta$, we are done by \cref{assumption1}. Hence we assume the opposite condition. Write 
     $$\theta_1 = F_1^{\,p_1} \cdots F_a^{\,p_a} G_1^{\,q_1} \cdots G_b^{\,q_b} $$
 where the $F_k$'s and $G_k$'s are elements of $\mathcal{F}$ such that, for all $k$, $F_k^{\,p_k}$ divides $\theta$, while no $G_k^{\,q_k}$ divides $\theta$. Notice that $p_1 \geq 1$ by what was said above about $F_1$. Moreover, since $T_{\alpha,\beta}$ does not satisfy the assumption of \cref{assumption1}, necessarily we must have that $b \geq 1$, so there exists at least one such form $G_1$. In particular, notice that $G_1$ divides $g_{j_1}$ but $G_1^{\,m}$ does not divide $g_{i_1}$.
 
 Similarly, let $\,\displaystyle{\delta_1 \coloneq  \frac{g_{i_1}}{\gcd(g_{i_1},g_{j_1})}}\,$. If $\delta_1$ divides $\delta$, the thesis follows by applying \cref{assumption1} to $\delta$ and $\delta_1$. So, assume that $\delta_1$ does not divide $\delta$. Then, there exist a form $H_1 \in \mathcal{F}$ and an integer $r_1 \geq 1$ such that $H_1^{\,r_1}$ divides $\delta_1$ but does not divide $\delta$. In particular, $H_1$ divides $g_{i_1}$ and $H_1^m$ does not divide $g_{j_1}$. Moreover, since $\gcd(\theta_1,\delta_1)=1$, we also know that $H_1$ does not divide $\theta_1$ and $G_1$ does not divide $\delta_1$. 
 
 Now, rewriting $ \, \displaystyle{g_{i_1} =  \gcd(g_{i_1}, g_{j_1}) \delta_1}$  and $ \, \displaystyle{g_{j_1} =  \gcd(g_{i_1}, g_{j_1}) \theta_1}$, we get that
 \begin{equation}
   \label{gik-gkkequation} \hypertarget{gik-gkkequation}{}
     \theta\, \delta_{1\,} g_{i_2}\cdots g_{i_s} = \delta \, \theta_{1\,} g_{j_2}\cdots g_{j_s}
 \end{equation}
 We claim that either there exists a $g_{j_k}$ with $k \geq 2$ such that $H_1$ divides $g_{j_k}$ and $G_1^m$ does not divide $g_{j_k}$, or there exists a $g_{i_k}$ such that $G_1$ divides $g_{i_k}$ and $H_1^m$ does not divide $g_{i_k}$. Indeed, suppose that for all $k\geq 2$, $H_1$ divides $g_{j_k}$ if and only if $G_1^m$ divides $g_{j_k}$ and $H_1^m$ divides $g_{i_k}$ if and only if $G_1$ divides $g_{i_k}$. Assume that $H_1$ divides exactly $c$ of the $g_{j_k}$'s for $k \geq 2$ and that $G_1$ divides exactly $d$ of the $g_{i_k}$'s for $k \geq 2$. Then, the degree of $G_1$ on each side of \cref{gik-gkkequation} is at least $q_1 +cm$ and at most $q_1 -1 +dm$. Similarly, the degree of $H_1$ on each side of \cref{gik-gkkequation} is at least $r_1 +dm$ and at most $r_1 -1 +cm$. Hence, we must simultaneously have that $cm \leq dm -1$ and $dm \leq cm -1$, which is impossible. This proves our claim. 
 Up to relabeling the $j_k$'s or $i_k$'s, we may assume that $k=2$. 

 From the discussion above it follows that there exist generators $g_{h_1},g_{h_2}$ of $I^m$ such that either $H_1g_{j_1} = G_1g_{h_1}$ and $H_1g_{h_2} = G_1g_{j_2}$, or $G_1g_{i_1} = H_1g_{h_1}$ and $G_1g_{h_2} = H_1g_{i_2}$. 
 Let us consider the first case (the second case is equivalent). We can write 
  \begin{eqnarray*}
    T_{\alpha,\beta} & = & \theta T_{i_1}\cdots T_{i_s} - \delta T_{j_1}\cdots T_{j_s}\\
   & = & \theta T_{i_1}\cdots T_{i_s} - \delta T_{h_1}T_{h_2}\cdots T_{j_s} + \delta T_{j_3}\cdots T_{j_s}( T_{h_1}T_{h_2} -  T_{j_1}T_{j_2}).
  \end{eqnarray*}
 The last term is a multiple of a fiber-type relation by a monomial in $S$. Moreover, from our choice of $h_1$ and $h_2$ it follows that $\,\displaystyle{\delta = \frac{g_{\alpha}}{\gcd(g_{\alpha}, g_{\gamma})} },\,$ where $\gamma= (h_1, h_2, j_3, \ldots, j_s)$. Hence, 
    $$ T_{\alpha,\beta}^{(1)}= \theta T_{i_1}\cdots T_{i_s} - \delta T_{h_1}T_{h_2}\cdots T_{j_s} \in \mathcal{J}_s$$ 
 and we only need to prove our claim for this new relation.
Set $\,\displaystyle{\theta_1^{(1)} \coloneq \frac{g_{h_1}}{\gcd(g_{i_1},g_{h_1})}=\frac{\theta_1}{G_1}}$ (where the latter equality holds because $\,\displaystyle{\gcd(g_{i_1}, g_{h_1}) = H_1 \gcd(g_{i_1}, g_{j_1})}$).  
Then there are two possibilities: either this new relation $T_{\alpha,\beta}^{(1)}$ satisfies the assumption of \cref{assumption1} and the proof is complete, or we iterate the process considering another form $G_k\in\{G_1,\ldots,G_b\}$. As before, by subtracting a multiple of a fiber-type relation from $T_{\alpha,\beta}^{(1)}$, we reduce to a new relation $T_{\alpha,\beta}^{(2)}$ such that the term corresponding to $\theta_1^{(1)}$ is $\,\displaystyle{\theta_1^{(2)} \coloneq \frac{\theta_1^{(1)}}{G_k}}$. Since the monomial $F_1^{p_1} \cdots F_a^{p_a}$ divides $\theta$, iterating this argument finitely many times, we reduce to prove our claim for a relation satisfying the assumption of \cref{assumption1}. 
\end{proof}
 

 \begin{thm}  
   \label{degree2} \hypertarget{degree2}{}
  With the assumptions and notations of \cref{settingregular}, assume also that $I$ is generated by elements of the same degree with respect to some $\mathbb{Z}_{>0}$-grading. Then for all $m\geq 1$, the defining ideal of the Rees algebra $\mathcal{R}(I^m)$ is generated in degree at most 2 in the $T$-variables.
 \end{thm}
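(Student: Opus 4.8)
The plan is to use \cref{powerstarconf} to reduce the statement to a purely combinatorial fact about the generators, and then to run an exchange argument on their exponent vectors in the $F_p$'s. By \cref{powerstarconf} the ideal $I^m$ is of fiber type, so its defining ideal is $\mathcal{J}=\mathcal{L}+\mathcal{I}S$, where $\mathcal{L}$ is generated in $T$-degree one and $\mathcal{I}\subseteq K[T_1,\dots,T_{\mu}]$ is the defining ideal of the fiber cone $F(I^m)$. Since each $g_i$ is a monomial in the regular sequence $F_1,\dots,F_t$ and, by hypothesis, we may assume all $F_i$ homogeneous of a common degree, the minimal generators of $I^m$ are exactly the monomials $w=\prod_{p=1}^t F_p^{a_p}$ with $\sum_p a_p=(t-c+1)m=:D$ and $0\le a_p\le m$ (as recalled just before \cref{powerstarconf}); in particular $F(I^m)$ is the toric ring of $\{g_1,\dots,g_{\mu}\}$ and $\mathcal{I}$ is generated by the binomials $T_\alpha-T_\beta$ with $g_\alpha=g_\beta$, all of which satisfy $|\alpha|=|\beta|$ because $I^m$ is equigenerated. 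Hence it suffices to prove that $\mathcal{I}$ is generated in degree at most two. For monomials $w=\prod_p F_p^{a_p}$, $w'=\prod_p F_p^{b_p}$ in the $F_p$, write $|w|:=\sum_p a_p$ and $\gcd(w,w'):=\prod_p F_p^{\min(a_p,b_p)}$, so that $|g_i|=D$ and $|\gcd(w,g_i)|\le D$ for all $i$, with equality in the latter exactly when $w=g_i$.

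I would then induct on the degree $s$ of a nonzero binomial $T_\alpha-T_\beta\in\mathcal{I}$, with $\alpha=(i_1,\dots,i_s)$ and $\beta=(j_1,\dots,j_s)$; for $s\le 2$ there is nothing to prove. If some $g_{i_k}$ equals some $g_{j_l}$, then after relabelling $T_\alpha-T_\beta=T_{i_1}(T_{\alpha'}-T_{\beta'})$ with $T_{\alpha'}-T_{\beta'}\in\mathcal{I}$ of degree $s-1$, and we finish by induction. So suppose no $g_{i_k}$ equals any $g_{j_l}$; in particular $g_{i_1}\ne g_{j_1}$, so $\sum_k|\gcd(g_{i_k},g_{j_1})|<sD$. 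The crucial ingredient is the following quadratic exchange property: there exist $k'\in\{2,\dots,s\}$ and generators $u,v$ of $I^m$ with $uv=g_{i_1}g_{i_{k'}}$ and
$$
  |\gcd(u,g_{j_1})|+|\gcd(v,g_{j_1})|\ >\ |\gcd(g_{i_1},g_{j_1})|+|\gcd(g_{i_{k'}},g_{j_1})|.
$$
Granting this, and writing $T_u,T_v$ for the $T$-variables corresponding to $u,v$, one has
$$
  T_\alpha-T_\beta=\Bigl(\prod_{k\ne 1,k'}T_{i_k}\Bigr)\bigl(T_{i_1}T_{i_{k'}}-T_uT_v\bigr)+\bigl(T_{\alpha''}-T_\beta\bigr),
$$
where $\alpha''$ is $\alpha$ with $i_1,i_{k'}$ replaced by the indices of $u,v$. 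The first summand is an $S$-multiple of the degree-two relation $T_{i_1}T_{i_{k'}}-T_uT_v\in\mathcal{I}$, while $T_{\alpha''}-T_\beta\in\mathcal{I}$ still has degree $s$ but strictly larger value of the integer $\sum_k|\gcd(g_{i_k},g_{j_1})|$. Since that integer is non-negative, bounded above by $sD$, and equals $sD$ only when every $g_{i_k}=g_{j_1}$ (a common index), after finitely many applications of the exchange we reach a relation with a common index and conclude by the previous case. This closes the induction.

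The step I expect to be the main obstacle is establishing the quadratic exchange property, and it is exactly here that the bound $a_p\le m$ (i.e. the ``Veronese-type'' nature of $\{g_1,\dots,g_{\mu}\}$) is used. I would prove it by the same kind of double counting employed in the proof of \cref{powerstarconf}. Writing $\mathrm{ord}_{F_p}(w)$ for the exponent of $F_p$ in $w$: since $g_{i_1}\ne g_{j_1}$ have equal $F$-degree $D$, we may pick $F_p$ with $\mathrm{ord}_{F_p}(g_{i_1})>\mathrm{ord}_{F_p}(g_{j_1})$ and $F_q$ with $\mathrm{ord}_{F_q}(g_{i_1})<\mathrm{ord}_{F_q}(g_{j_1})$. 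Then $u:=\tfrac{F_q}{F_p}g_{i_1}$ is automatically a generator of $I^m$ — its $F_p$-exponent drops to $\mathrm{ord}_{F_p}(g_{i_1})-1\ge\mathrm{ord}_{F_p}(g_{j_1})\ge 0$ and its $F_q$-exponent rises to $\mathrm{ord}_{F_q}(g_{i_1})+1\le\mathrm{ord}_{F_q}(g_{j_1})\le m$ — and $|\gcd(u,g_{j_1})|=|\gcd(g_{i_1},g_{j_1})|+1$. It then remains to choose $k'\in\{2,\dots,s\}$ so that $v:=\tfrac{F_p}{F_q}g_{i_{k'}}$ is again a generator, i.e. $\mathrm{ord}_{F_q}(g_{i_{k'}})\ge 1$ and $\mathrm{ord}_{F_p}(g_{i_{k'}})\le m-1$, and so that $|\gcd(v,g_{j_1})|\ge|\gcd(g_{i_{k'}},g_{j_1})|$; the latter fails only when simultaneously $\mathrm{ord}_{F_q}(g_{i_{k'}})\le\mathrm{ord}_{F_q}(g_{j_1})$ and $\mathrm{ord}_{F_p}(g_{i_{k'}})\ge\mathrm{ord}_{F_p}(g_{j_1})$. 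The existence of such a $k'$ — allowing also other admissible choices of $F_p$ and $F_q$ — is forced by comparing, on the two sides of the equality $g_{i_1}\cdots g_{i_s}=g_{j_1}\cdots g_{j_s}$, the total $F_p$-exponent and the total $F_q$-exponent, in the same spirit as the impossibility ``$cm\le dm-1$ and $dm\le cm-1$'' obtained in the proof of \cref{powerstarconf}. Carrying out this bookkeeping is the one genuinely computational point of the argument.
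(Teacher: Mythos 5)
Your reduction to the fiber cone via \cref{powerstarconf} and the exchange move $u=\tfrac{F_q}{F_p}g_{i_1}$, $v=\tfrac{F_p}{F_q}g_{i_{k'}}$ are exactly the paper's strategy. The problem is the ingredient you yourself flag as the main obstacle: the ``quadratic exchange property'' with the strict inequality is \emph{false}, so your termination argument has a genuine gap. Take $\mathcal{F}=\{x_1,x_2,x_3\}$, $c=2$, $m=2$, so the generators of $I^2$ are the monomials of degree $4$ in $F_1,F_2,F_3$ with all exponents $\le 2$, and consider the degree-three relation
\begin{equation*}
(F_1^2F_2^2)\,(F_1F_2F_3^2)^2 \;=\; (F_2^2F_3^2)\,(F_1^2F_2F_3)^2 \;=\; F_1^4F_2^4F_3^4,
\end{equation*}
whose two sides share no generator. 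With $g_{i_1}=F_1^2F_2^2$ and $g_{j_1}=F_2^2F_3^2$ the only admissible choice is $(F_p,F_q)=(F_1,F_3)$, giving $u=F_1F_2^2F_3$ and, for either $k'$, $v=F_1^2F_2F_3$; then $|\gcd(u,g_{j_1})|+|\gcd(v,g_{j_1})|=3+2=5=2+3=|\gcd(g_{i_1},g_{j_1})|+|\gcd(g_{i_{k'}},g_{j_1})|$, an equality rather than a strict inequality, so $\sum_k|\gcd(g_{i_k},g_{j_1})|$ does not increase. One checks that every other admissible exchange for this relation (relabelling which generator plays $g_{i_1}$, or exchanging on the $\beta$-side) likewise leaves the corresponding potential unchanged. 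So the existence of a strictly improving $k'$ is \emph{not} forced by comparing total $F_p$- and $F_q$-exponents, and your induction has no certificate of termination. (The relation is of course still reducible to quadrics --- indeed the very exchange above produces the common factor $F_1^2F_2F_3$ on both sides --- the failure is only in your measure of progress.)

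The paper circumvents exactly this difficulty in two ways. First, its potential is the coarser quantity $\deg\!\big(g_{i_1}/\gcd(g_{i_1},g_{j_1})\big)$, which tracks only the distance between the two distinguished generators: the move $g_{i_1}\mapsto g_{i_1}F_q/F_p$ decreases it by exactly one \emph{regardless} of what happens to $g_{i_{k'}}$, so the extra constraint $|\gcd(v,g_{j_1})|\ge|\gcd(g_{i_{k'}},g_{j_1})|$ --- the one that fails above --- is never needed. Second, the paper does not wait for a literal common generator: it stops at the weaker condition $(\ast)$, namely $g_{i_1}g_{i_2}=g_{j_1}g_h$ for some generator $g_h$, which is automatic once the distance reaches one and already yields a reduction to degree $s-1$. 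A further point you should address if you repair the argument: the double-counting only guarantees a usable index $k'$ on \emph{one} of the two sides of the relation (the paper's ``either \dots or \dots''), so the exchange may have to be performed on $\beta$; with the symmetric potential $|\gcd(g_{i_1},g_{j_1})|$ this is harmless, but with your asymmetric sum over all $k$ it is an additional obstruction.
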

 
 \begin{proof} 
  It is sufficient to show that any fiber-type relation of the form 
  \begin{equation}
    \label{eqfiber} \hypertarget{eqfiber}{}
      T_{i_1}\cdots T_{i_s} -  T_{j_1}\cdots T_{j_s} \in \mathcal{J}
  \end{equation}   with $s \geq 3$ can be expressed as linear combination of fiber-type relations of degree at most $s-1$.
  First, assume that the following condition holds. \\
  $(\ast)$: Up to relabeling the indices, there exists a generator $g_h \in I$ such that $g_{i_1}g_{i_2}=g_{j_1}g_h$.\\
  In this case, we get 
  $$ T_{i_1}\cdots T_{i_s} -  T_{j_1}\cdots T_{j_s} = T_{i_3}\cdots T_{i_s}(T_{i_1}T_{i_2}-T_{j_1}T_{h}) -  T_{j_1}(T_{j_2}\cdots T_{j_s}-T_hT_{i_3}\cdots T_{i_s}),  $$ and observe that the last summand corresponds to a relation of degree $s-1$, since
  $$  g_{j_2}\cdots g_{j_s} = \frac{ g_{i_1}g_{i_2}\cdots g_{i_s}}{g_{j_1}} = g_{h} g_{i_3} \cdots g_{i_s}. $$ 
  
  Hence, this case is concluded and we may assume that condition $(\ast)$ does not hold for the relation (\ref{eqfiber}).
  Write $g_{i_1}= \gcd(g_{i_1},g_{j_1}) F_1^{p_1} \cdots F_a^{p_a}$ and $g_{j_1}= \gcd(g_{i_1},g_{j_1}) G_1^{q_1} \cdots G_b^{q_b}$, where $F_1,\dots, F_a, G_1,\dots G_b \in \mathcal{F}$. 
  Observe that since the ideal $I^m$ is equigenerated, then $\deg(F_1^{p_1}\cdots F_a^{p_a})= \deg(G_1^{q_1} \cdots G_b^{q_b})$. Moreover, 
  \begin{equation}
    \label{Fl-Glequation} \hypertarget{Fl-Glequation}{}
      F_1^{p_1}\cdots F_a^{p_a} g_{i_2}\cdots g_{i_s} =  G_1^{q_1} \cdots G_b^{q_b} g_{j_2}\cdots g_{j_s}.
  \end{equation}
  Now, necessarily, we can find a $k \geq 2$ so that either $g_{i_k}$ is divisible by some $G_l$ and not divisible by at least one power $F_l^m$ or $g_{j_k}$ is divisible by some $F_l$ and not divisible by at least one power $G_l^m$. 
  Indeed, if exactly $c$ elements among $g_{i_2},\ldots, g_{i_s}$ are divisible by some of the $G_1, \ldots, G_b$ and they are all divisible also by $F_1^m \cdots F_a^m$, then the degree of each $F_l$ in each side of \cref{Fl-Glequation} is at least $cm + p_l$ while the degree of each $G_l$ is at most $cm$. 
  Similarly, if exactly $d$ elements among $g_{j_2},\ldots, g_{j_s}$ are divisible by some of the $F_1, \ldots, F_a$ and they are all divisible also by $G_1^m \cdots G_b^m$, then the degree of each $G_l$ in each side of \cref{Fl-Glequation} is at least $dm + q_l$ while the degree of each $F_l$ is at most $dm$. 
  These conditions cannot be satisfied simultaneously, hence for some $k \geq 2$ there must exist a generator $g_{j_k}$ divisible by some $F_l$ and not divisible by at least one power $G_l^m$.
  In particular, this argument implies that if $\deg(F_1^{p_1}\cdots F_a^{p_a})=1$, then $g_{i_1}=\gcd(g_{i_1},g_{j_1})F_1$, $g_{j_1}=\gcd(g_{i_1},g_{j_1})G_1$ and $g_{j_1}g_{j_k} = g_{i_1}g_h$ where $g_h = \frac{G_1}{F_1}g_{j_k}$ is a generator of $I$.
  Therefore, condition $(\ast)$ is satisfied.

  If $\deg(F_1^{p_1}\cdots F_a^{p_a}) \geq 2$, without loss of generality, possibly relabelling appropriately 
  we may assume that $g_{i_2}$ is divisible by $G_1$ and not divisible by $F_1^m$ and consider the generators of $I^m$, $g_{h_1}:= g_{i_1}\frac{G_1}{H_1}$ and $g_{h_2}:= g_{i_2}\frac{F_1}{G_1}$. It then follows that 
  $$ T_{i_1}\cdots T_{i_s} -  T_{j_1}\cdots T_{j_s}= T_{i_3}\cdots T_{i_s}( T_{i_1}T_{i_2}-T_{h_1}T_{h_2}) + T_{h_1}T_{h_2}T_{i_3}\cdots T_{i_s} - T_{j_1}\cdots T_{j_s}. $$ 
  Since the first summand is a multiple of a fiber-type relation of degree 2, we only need to prove the theorem for the relation $T_{h_1}T_{h_2}T_{i_3}\cdots T_{i_s} - T_{j_1}\cdots T_{j_s}$. Observe that now $g_{h_1}=\gcd(g_{h_1},g_{j_1})F_1^{p_1-1}F_2^{p_2} \cdots F_a^{p_a}$. If this new relation does not satisfy condition $(\ast)$, by iterating the process, in a finite number of steps we get a relation in which the total degree of the monomial in the forms $F_1, \ldots, F_a$ is one, and hence condition $(\ast)$ must finally be satisfied. This concludes the proof.
  \end{proof}

\begin{remark}
  As observed in the introduction, when $\mathcal{F} = \lbrace x_1, \ldots, x_n \rbrace$, the powers $I_{c, \mathcal{F}}^m$ are polymatroidal ideals satisfying the strong exchange property. Hence, by \cite[3.3]{HHVladoiu} and \cite[5.3(b)]{HHpolymatroids} it was already known that these ideals 
  are of fiber type and their fiber-type relations have degree at most two 
  The result \cite[5.3]{HHpolymatroids} is proved using a sorting technique, relying on Gr\"obner bases and monomial orders.
   Also, mapping variables $y_i$  to the $F_i \in \mathcal{F}$ defines a flat map $\, \displaystyle{ K[y_1, \ldots, y_t] \to K[F_1, \ldots, F_t]}$. Since formation of Rees algebras commutes with flat base change (see \cite[1.3]{EHU}), then \cref{powerstarconf} and \cref{degree2} follow from the case of a monomial regular sequence.  However, our direct proof recovers the known results while also giving a new proof in the monomial case that requires less technical machinery. 
\end{remark}

\section{The linear type property of ideals of star configurations}
\label{sectionLinearType} \hypertarget{sectionLinearType}{}

In this section we study under what conditions the ideal of a star configuration is of linear type. Moreover, we give a criterion to determine how this property may fail in the case of linear star configurations. Our first result characterizes the linear type property of star configurations of hypersurfaces. 

 \begin{thm} 
   \label{lineartype} \hypertarget{lineartype}{}
    Suppose that $I_{c, \mathcal{F}} \subseteq R=K[x_1, \ldots, x_n]$ is a star configuration on $\mathcal{F}= \lbrace  F_1, \ldots, F_t \rbrace$. Let $n=\dim R$ and assume that $\mathcal{F}$ contains a regular sequence of length $n$. The following are equivalent:
      \begin{enumerate}[\rm(1)]
        \item $I_{c, \mathcal{F}}$ is of linear type.
        \item $\mu(I_{c, \mathcal{F}}) \leq n$.
        \item $c=2$ and $n=t$, i.e. $\mathcal{F}$ is a regular sequence. 
      \end{enumerate}
 \end{thm}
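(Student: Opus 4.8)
The plan is to establish the cycle of implications $(1)\Rightarrow(2)\Rightarrow(3)\Rightarrow(1)$; I use throughout the standing convention $2\le c\le t-1$ on a star configuration. The implication $(1)\Rightarrow(2)$ is immediate from \cref{lintypeGinfty}: if $I_{c,\mathcal{F}}$ is of linear type it satisfies $G_{\infty}$, hence $G_{n+1}$. Since $I_{c,\mathcal{F}}$ is a proper graded ideal, the homogeneous maximal ideal $\mathfrak{m}$ belongs to $V(I_{c,\mathcal{F}})$ and has $\het\mathfrak{m}=n$, so $G_{n+1}$ forces $\mu(I_{c,\mathcal{F}})=\mu\big((I_{c,\mathcal{F}})_{\mathfrak{m}}\big)\le\dim R_{\mathfrak{m}}=n$. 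For $(2)\Rightarrow(3)$, by \cref{generatorsI}(1) we have $\mu(I_{c,\mathcal{F}})=\binom{t}{\,t-c+1\,}\ge t$, while the hypothesis that $\mathcal{F}$ contains a regular sequence of length $n$ gives $t\ge n$; combining with (2) yields $n\ge\binom{t}{\,t-c+1\,}\ge t\ge n$, so all these are equalities. Thus $t=n$ — and then $\mathcal{F}$, being a set of $n$ elements containing a regular sequence of length $n$, is itself a regular sequence — and $\binom{t}{\,t-c+1\,}=t$. Since $2\le t-c+1\le t-1$ and $\binom{t}{k}=t$ for $k$ in that range only when $k=t-1$, we get $c=2$, which is (3).

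The substance is in $(3)\Rightarrow(1)$, where $\mathcal{F}=\{F_1,\dots,F_n\}$ is a regular sequence and, by \cref{generatorsI}(1), $I:=I_{2,\mathcal{F}}$ is minimally generated by $g_k:=\prod_{i\ne k}F_i$ ($1\le k\le n$). The idea is to pass to an equigenerated monomial model by flat base change. Since $F_1,\dots,F_n$ is a homogeneous regular sequence of length $n=\dim R$, the $F_i$ are algebraically independent over $K$, so $A:=K[F_1,\dots,F_n]\subseteq R$ is a polynomial ring $K[y_1,\dots,y_n]$ over which $R$ is free, hence flat; moreover the formation of Rees algebras commutes with this base change (cf.\ \cite[1.3]{EHU} and the remark following \cref{degree2}). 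Under $y_i\mapsto F_i$ the ideal $I=(g_1,\dots,g_n)$ is the extension to $R$ of the monomial ideal $\widetilde I:=\big(\prod_{i\ne1}y_i,\dots,\prod_{i\ne n}y_i\big)\subseteq A$, which is exactly the star configuration ideal $\widetilde I_{2,\{y_1,\dots,y_n\}}$ on the regular sequence $\{y_1,\dots,y_n\}$. By flatness $\widetilde I\otimes_A R=\widetilde I R=I$ and $\mathrm{Sym}_A(\widetilde I)\otimes_A R=\mathrm{Sym}_R(I)$, so applying $-\otimes_A R$ to the natural map $\mathrm{Sym}_A(\widetilde I)\to\mathcal{R}(\widetilde I)$ shows that if $\widetilde I$ is of linear type then so is $I$. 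Hence it suffices to prove $\widetilde I$ is of linear type.

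Now $\widetilde I$ is generated in a single degree (namely $n-1$), so \cref{powerstarconf} applies and $\widetilde I$ is of fiber type; its defining ideal is therefore the sum of its linear strand $\mathcal{L}$ and the extension of the defining ideal of the fiber cone $F(\widetilde I)$. But $F(\widetilde I)$ is the toric ring $K[\prod_{i\ne1}y_i,\dots,\prod_{i\ne n}y_i]$, and this is a polynomial ring: the monomial $\prod_k\big(\prod_{i\ne k}y_i\big)^{a_k}$ equals $\prod_i y_i^{\,|a|-a_i}$ with $|a|=\sum_k a_k$, and the map $a\mapsto(|a|-a_1,\dots,|a|-a_n)$ is injective (recover $|a|$ from the coordinate sum $(n-1)|a|$, then each $a_k$), so the $\prod_{i\ne k}y_i$ are algebraically independent over $K$. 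Thus the defining ideal of $F(\widetilde I)$ is zero, the defining ideal of $\mathcal{R}(\widetilde I)$ equals $\mathcal{L}$, and $\widetilde I$ — hence $I$ — is of linear type. This closes the cycle.

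I expect the only genuine obstacle to be in $(3)\Rightarrow(1)$: one must resist applying \cref{powerstarconf} (or \cref{degree2}) directly to $I$, since the forms $F_i$ need be neither linear nor equigenerated — even after reweighting $R$ — so the equigeneration hypothesis of those results can genuinely fail for $I$ itself, and the flat base change to the monomial model $\widetilde I$ is essential. Once that reduction is in place, the remaining point — that the fiber cone of $\widetilde I$ is a polynomial ring, equivalently that there are no binomial relations $T_\alpha-T_\beta$ of degree $\ge2$ among the generators $\prod_{i\ne k}y_i$ of $\widetilde I$ — is routine.
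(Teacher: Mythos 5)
Your proof is correct. The equivalences $(1)\Rightarrow(2)\Rightarrow(3)$ are handled essentially as in the paper: linear type gives $G_\infty$ via \cref{lintypeGinfty}, hence $\mu(I_{c,\mathcal F})\le n$, and the count $\binom{t}{t-c+1}\ge t\ge n$ from \cref{generatorsI} forces $t=n$ and $c=2$. The genuine divergence is in $(3)\Rightarrow(1)$. The paper applies \cref{powerstarconf} and \cref{degree2} directly to $I=I_{2,\mathcal F}$, concluding that $I$ is of fiber type with quadratic binomial fiber relations, and then kills any quadric $T_{i_1}T_{i_2}-T_{i_3}T_{i_4}$ by observing that $g_{i_1}g_{i_2}=g_{i_3}g_{i_4}$ would force $F_{i_1}F_{i_2}=F_{i_3}F_{i_4}$, which is impossible for a regular sequence. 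You instead perform the flat base change $K[y_1,\dots,y_n]\to R$, $y_i\mapsto F_i$, reduce to the monomial model $\widetilde I$, and show that its fiber cone is a polynomial ring outright. Your route is slightly longer but more robust: the hypothesis of \cref{powerstarconf} and \cref{degree2} --- equigeneration of $I$ with respect to some positive grading --- need not hold for $I$ itself when the $F_i$ have different degrees (your closing remark on this point is exactly right), whereas it holds trivially for $\widetilde I$; the paper itself gestures at this base-change mechanism in the remark following \cref{degree2}, and your write-up makes the reduction explicit where the paper's proof leaves it implicit. Your verification that the exponent map $a\mapsto(|a|-a_1,\dots,|a|-a_n)$ is injective (so the fiber cone has no relations at all, not merely none in degree two) is likewise a clean substitute for the paper's unique-factorization argument.
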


 \begin{proof}
  Assume that $I_{c, \mathcal{F}}$ is of linear type. Then, $I_{c, \mathcal{F}}$ satisfies the $G_{\infty}$ condition and in particular $\mu(I_{c, \mathcal{F}}) \leq n$. In general, by \cref{generatorsI} $\,\mu(I_{c, \mathcal{F}})= \binom{t}{t-c+1} \geq t \geq n$. Hence, since $2 \leq c \leq n-1$, $\mu(I_{c, \mathcal{F}}) \leq n$ if and only if $c=2$ and $n=t$. In particular, in this case $\mu(I_{c, \mathcal{F}}) = n$.

  Finally, if $c=2$ and $n=t$, we know by \cref{powerstarconf} and \cref{degree2} that $I_{c, \mathcal{F}}$ is of fiber type and the non-linear relations are generated by binomials of degree 2. But, if there was a nonzero relation of degree two of the form $T_{i_1}T_{i_2}-T_{i_3}T_{i_4}$ for distinct indices $ i_1, i_2, i_3,i_4$, setting $I_{c, \mathcal{F}}=(g_1, \ldots, g_n)$, we would have $g_{i_1}g_{i_2}-g_{i_3}g_{i_4}=0$. Since $c=2$, we have that $g_i= \prod_{j \neq i} F_j$ and this implies $F_{i_1}F_{i_2}=F_{i_3}F_{i_4}$ that is a contradiction since $n=t$ and $F_{1}, \ldots, F_{t}$ form a regular sequence. Therefore there are no relations of degree two and $I_{c, \mathcal{F}}$ is of linear type.
 \end{proof}

 \begin{remark} \label{remarkc=2} \hypertarget{remarkc=2}{}
   From the proof of (3) implies (1) it follows that if $c=2$ and all the elements of $\mathcal{F}$ form a regular sequence then $I=I_{2, \mathcal{F}}$ is of linear type. In this case, since $I$ is a perfect ideal of height 2, then the Rees algebra of $I$ is Cohen-Macaulay by \cite[2.6]{HSV}.
 \end{remark}

 It is clear from the definition that localizing an ideal of linear type at any prime ideal produces an ideal of linear type. In the rest of this section we aim to measure how an ideal of star configurations may fail to be of linear type by examining its linear type property locally. For this purpose, we first need to understand how ideals of star configurations behave under localization. 

 \begin{lem} \label{localization} \hypertarget{localization}{}
   Let $I_{c, \mathcal{F}} \subseteq R$ be a star configuration on $\mathcal{F}$ and let $\mathfrak{p}$ be a prime ideal of $R$. Set $\mathcal{F}^{\prime}:= \mathcal{F} \cap \mathfrak{p}$.
   Then 
    $$ (I_{c, \mathcal{F}})_{\mathfrak{p}} = \left\{ \begin{array}{ccc} I_{c, \mathcal{F}^{\prime}}R_{\mathfrak{p}} &\mbox{if } |\mathcal{F}^{\prime}|  > c \\
       \mbox{ complete intersection of height } c &\mbox{if } |\mathcal{F}^{\prime}|  = c \\
       R_{\mathfrak{p}}  &\mbox{if } |\mathcal{F}^{\prime}|  < c. \\
    \end{array}\right.  $$
 \end{lem}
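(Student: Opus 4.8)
The plan is to analyze each of the three cases by exploiting the explicit description $I_{c,\mathcal{F}} = \bigcap_{1 \le i_1 < \cdots < i_c \le t} (F_{i_1}, \ldots, F_{i_c})$ together with the fact that localization commutes with finite intersections of ideals. The key observation is that for a prime $\mathfrak{p}$, the extension $(F_{i_1}, \ldots, F_{i_c})R_{\mathfrak{p}}$ equals $R_{\mathfrak{p}}$ precisely when at least one $F_{i_j} \notin \mathfrak{p}$, i.e.\ when not all of $F_{i_1}, \ldots, F_{i_c}$ lie in $\mathcal{F}' := \mathcal{F} \cap \mathfrak{p}$. So after localizing, the only intersectands that survive (are proper) are those indexed by $c$-subsets entirely contained in $\mathcal{F}'$. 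This immediately gives a uniform formula
$$ (I_{c,\mathcal{F}})_{\mathfrak{p}} = \bigcap_{\substack{1 \le i_1 < \cdots < i_c \le t \\ F_{i_1}, \ldots, F_{i_c} \in \mathcal{F}'}} (F_{i_1}, \ldots, F_{i_c})R_{\mathfrak{p}}, $$
with the empty intersection interpreted as $R_{\mathfrak{p}}$.

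First I would dispatch the case $|\mathcal{F}'| < c$: there are no $c$-subsets of $\mathcal{F}$ inside $\mathfrak{p}$, so every intersectand becomes the unit ideal and $(I_{c,\mathcal{F}})_{\mathfrak{p}} = R_{\mathfrak{p}}$. Next, the case $|\mathcal{F}'| = c$, say $\mathcal{F}' = \{F_{j_1}, \ldots, F_{j_c}\}$: there is exactly one surviving intersectand, namely $(F_{j_1}, \ldots, F_{j_c})R_{\mathfrak{p}}$. Since $\mathcal{F}'$ is a subset of $\mathcal{F}$ of cardinality $c \le c+1$ and any subset of $\mathcal{F}$ of cardinality $c+1$ is a regular sequence (this uses only that $\mathcal{F}$ defines a star configuration, or one enlarges $\mathcal{F}'$ by one element still in $\mathcal{F}$ to invoke the hypothesis — care is needed if $t = c$, but then $\mathcal{F}' = \mathcal{F}$ is itself required to be a regular sequence of length $c$ by the standing assumption, since $c \le \min\{n,t\}$ forces considering the relevant hypothesis), the ideal $(F_{j_1}, \ldots, F_{j_c})$ has height $c$ and is a complete intersection; localizing preserves this, so $(I_{c,\mathcal{F}})_{\mathfrak{p}}$ is a complete intersection of height $c$ (one checks $\mathfrak{p}$ does contain this ideal, so the height is not dropped).

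The main case, and the one I expect to be the real work, is $|\mathcal{F}'| > c$. Here I claim $(I_{c,\mathcal{F}})_{\mathfrak{p}} = I_{c,\mathcal{F}'}R_{\mathfrak{p}}$. The inclusion $\supseteq$ is formal: $I_{c,\mathcal{F}'} = \bigcap_{c\text{-subsets of }\mathcal{F}'}(F_{i_1}, \ldots, F_{i_c})$ is a sub-intersection of the surviving intersectands (in fact exactly the surviving ones), and localization of a finite intersection is the intersection of localizations, so $I_{c,\mathcal{F}'}R_{\mathfrak{p}} = \bigcap (F_{i_1}, \ldots, F_{i_c})R_{\mathfrak{p}}$ over $c$-subsets of $\mathcal{F}'$, which is precisely the displayed formula for $(I_{c,\mathcal{F}})_{\mathfrak{p}}$. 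The potential subtlety — the thing to be careful about — is twofold: (i) confirming that $\mathcal{F}'$ genuinely defines a star configuration in $R_{\mathfrak{p}}$ (i.e.\ that every $(c+1)$-subset of $\mathcal{F}'$ remains a regular sequence after localization, which follows since a regular sequence in $R$ localizes to one in $R_{\mathfrak{p}}$ when it stays inside $\mathfrak{p}$, using that $R_{\mathfrak{p}}$ is Cohen–Macaulay or directly that permutability and the non-unit condition are preserved), and (ii) making sure the bookkeeping "surviving intersectand $\iff$ $c$-subset of $\mathcal{F}'$" is airtight, which is just the remark that $(F_{i_1},\ldots,F_{i_c})R_{\mathfrak{p}} \ne R_{\mathfrak{p}}$ iff all $F_{i_j} \in \mathfrak{p}$ iff all $F_{i_j} \in \mathcal{F}'$. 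Once these are in place the three cases follow uniformly from the single displayed formula, so I would organize the write-up around proving that formula first and then reading off the three regimes.
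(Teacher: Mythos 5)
Your proposal is correct and follows essentially the same route as the paper: localize the defining intersection, observe that $(F_{i_1},\ldots,F_{i_c})_{\mathfrak{p}}=R_{\mathfrak{p}}$ exactly when some $F_{i_j}\notin\mathfrak{p}$, and read off the three cases from which intersectands survive. The extra care you take (verifying that $\mathcal{F}'$ still defines a star configuration after localizing, and the bookkeeping of surviving $c$-subsets) only makes explicit what the paper leaves implicit.
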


 \begin{proof}
   Finite intersection of ideals commutes with localization, hence 
     $$ (I_{c, \mathcal{F}})_{\mathfrak{p}} = \bigcap_{i_1, \ldots, i_c} (F_{i_1}, \ldots, F_{i_c})_{\mathfrak{p}}.  $$ 
   If some $F_{i_j} \not \in \mathfrak{p}$, the ideal $(F_{i_1}, \ldots, F_{i_c})_{\mathfrak{p}} = R_{\mathfrak{p}}$. If  $|\mathcal{F}^{\prime}|  < c$, this necessarily happens for all the ideals in the intersection and $(I_{c, \mathcal{F}})_{\mathfrak{p}}=R_{\mathfrak{p}}$. 

   If some $F_{i_1}, \ldots, F_{i_s} \in \mathfrak{p}$ and they form a regular sequence in $R$, then they form a regular sequence also in the ring $R_{\mathfrak{p}}$. 
   Hence, in the case $|\mathcal{F}^{\prime}|  = c$, then $(I_{c, \mathcal{F}})_{\mathfrak{p}}=(F_{i_1}, \ldots, F_{i_c})R_{\mathfrak{p}}$ where $\mathcal{F}^{\prime}=\{F_{i_1}, \ldots, F_{i_c}\}$ and it is a complete intersection of height $c$.
   Instead, if $|\mathcal{F}^{\prime}|  > c$, the ideal $(I_{c, \mathcal{F}})_{\mathfrak{p}} = I_{c, \mathcal{F'}} R_{\mathfrak{p}}$ is a star configuration.
 \end{proof}

 Although the proofs of \cref{lineartype} and \cref{localization} work for any star configuration of hypersurfaces, in the rest of this section we restrict to the case when the $F_i$'s are all linear.  
 
 \begin{prop} \label{Gs} \hypertarget{Gs}{} 
   Let $R=K[x_1, \ldots, x_n]$ and let $I_{c, \mathcal{F}}$ be a linear star configuration of height $c$. The following are equivalent:
  \begin{enumerate}[\rm(1)]
     \item $(I_{c, \mathcal{F}})_{\mathfrak{p}}$ is of linear type for every prime $\mathfrak{p} \in \Spec(R)$ with $ \het{\mathfrak{p}} \leq s -1$.
     \item $I_{c, \mathcal{F}}$ satisfies the $G_s$ condition. 
  \end{enumerate}
 \end{prop}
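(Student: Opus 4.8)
The plan is to prove both implications using the local structure of linear star configurations provided by \cref{localization}, together with the interpretation of the $G_s$ condition in terms of heights of ideals of minors of a presentation matrix (see \cref{defGs}). Throughout I will use that an ideal that is locally of linear type (at all primes, or at all primes up to a given height) inherits the corresponding $G$-condition, since linear type implies $G_\infty$ (\cref{lintypeGinfty}, \cref{remarkc=2}).

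\textit{(1) $\Rightarrow$ (2).} Suppose $(I_{c,\mathcal{F}})_{\mathfrak p}$ is of linear type for every prime $\mathfrak p$ of height at most $s-1$. The $G_s$ condition for $I_{c,\mathcal{F}}$ asks precisely that $\mu\big((I_{c,\mathcal{F}})_{\mathfrak p}\big)\le \dim R_{\mathfrak p}$ for every $\mathfrak p\in V(I_{c,\mathcal{F}})$ of height at most $s-1$. Fix such a $\mathfrak p$ and set $\mathcal{F}'=\mathcal{F}\cap\mathfrak p$. By \cref{localization}, $(I_{c,\mathcal{F}})_{\mathfrak p}$ is either all of $R_{\mathfrak p}$ (excluded, since $\mathfrak p\in V(I_{c,\mathcal{F}})$), a complete intersection of height $c$ (in which case $\mu = c \le \het\mathfrak p\le \dim R_{\mathfrak p}$, because a complete intersection of height $c$ requires $\het\mathfrak p\ge c$, so we are done), or the localized star configuration $I_{c,\mathcal{F}'}R_{\mathfrak p}$ with $|\mathcal{F}'|>c$. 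In this last case, being of linear type forces $\mu\le\dim R_{\mathfrak p}$ directly from $G_\infty$; so the desired inequality holds in every case, proving $G_s$.

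\textit{(2) $\Rightarrow$ (1).} Conversely, assume $I_{c,\mathcal{F}}$ satisfies $G_s$ and fix a prime $\mathfrak p$ with $\het\mathfrak p\le s-1$; we must show $(I_{c,\mathcal{F}})_{\mathfrak p}$ is of linear type. Again invoke \cref{localization}: if $|\mathcal{F}'|<c$ the localization is the unit ideal, which is trivially of linear type; if $|\mathcal{F}'|=c$ it is a complete intersection (generated by a regular sequence), hence of linear type. The remaining case is $|\mathcal{F}'|>c$, where $(I_{c,\mathcal{F}})_{\mathfrak p}=I_{c,\mathcal{F}'}R_{\mathfrak p}$ is itself (the localization of) a linear star configuration. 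The key point is that $G_s$ for the original ideal, passed through localization, should give us that $I_{c,\mathcal{F}'}$ satisfies $G_{\dim R_{\mathfrak p}+1}$ locally — in particular $G_{\dim R_{\mathfrak p}}$ — and then, since $I_{c,\mathcal{F}'}$ is a linearly presented perfect ideal of height $c\ge 2$, one applies a linear-type criterion for such ideals. When $c=2$ this is exactly the Artin–Nagata / Morey–Ulrich circle of ideas recalled around \cref{morey-ulrich}: a linearly presented perfect ideal of height $2$ satisfying $G_\ell$ where $\ell=\dim$ of the ambient local ring is of linear type precisely when $\mu$ does not exceed that dimension, i.e. the $G$-condition already forces linear type at that height. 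For general $c$ one cites the analogous criterion (an ideal of linear type is characterized, for ideals with sufficiently good presentations, by $G_\infty$ together with the sliding-depth-type hypotheses that perfect linearly presented ideals automatically satisfy); alternatively, one reduces to the equivalence $\mu(I)\le n \iff$ linear type established in \cref{lineartype}, applied to the smaller polynomial ring supporting $\mathcal{F}'$.

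\textit{Main obstacle.} The delicate step is the last one in $(2)\Rightarrow(1)$: translating the $G_s$ condition on $I_{c,\mathcal{F}}$ into a statement strong enough to conclude linear type for the localized star configuration $I_{c,\mathcal{F}'}R_{\mathfrak p}$, i.e. bridging from a numerical local condition on $\mu$ back up to the actual vanishing of all non-linear defining equations. One clean route is to show that the non-linear defining equations of $\mathcal{R}(I_{c,\mathcal{F}})$ (which by \cref{mainsec3} exist only when $\mathcal{F}$ is not a regular sequence and, by \cref{lineartype}, only when $\mu>n$) "localize away" exactly at the primes $\mathfrak p$ where $G_s$ gives $\mu((I_{c,\mathcal{F}})_{\mathfrak p})\le\dim R_{\mathfrak p}$ — this requires understanding the support of $\mathcal{J}/\mathcal{L}$, which is where the combinatorial description of $\mathcal{F}'\subseteq\mathfrak p$ does the work. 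I expect the cleanest writeup to run the whole argument through \cref{lineartype} applied fibrewise: for $|\mathcal{F}'|>c$, $(I_{c,\mathcal{F}})_{\mathfrak p}$ is of linear type $\iff \mu\le\dim R_{\mathfrak p}$ (the regular-sequence-of-full-length hypothesis of \cref{lineartype} holds after localizing since the $F_i$ are linear), and this last inequality is exactly what $G_s$ provides — which closes the loop with no further machinery.
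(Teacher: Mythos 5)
Your direction (1) $\Rightarrow$ (2) is fine (it is immediate from \cref{lintypeGinfty}, which is how the paper dispatches it). The genuine gap is in (2) $\Rightarrow$ (1), exactly at the step you flag as the ``main obstacle''. Your proposed resolution is to apply \cref{lineartype} fibrewise at $\mathfrak{p}$, claiming that for $|\mathcal{F}\cap\mathfrak{p}|>c$ one has ``$(I_{c,\mathcal{F}})_{\mathfrak p}$ is of linear type $\iff \mu((I_{c,\mathcal{F}})_{\mathfrak p})\le\dim R_{\mathfrak p}$'' because ``the regular-sequence-of-full-length hypothesis of \cref{lineartype} holds after localizing since the $F_i$ are linear.'' That parenthetical is false: the forms in $\mathcal{F}\cap\mathfrak{p}$ need not span a space of dimension $\het\,\mathfrak{p}$, so the hypothesis of \cref{lineartype} (a regular sequence in $\mathcal{F}$ of length equal to the dimension of the ambient ring) fails at $\mathfrak{p}$, and with it the equivalence. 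Concretely: let $n\ge 5$, $\mathcal{F}=\{x_1,\dots,x_n,L\}$ with $L=x_1+x_2+x_3$, $c=2$, and $\mathfrak{p}=(x_1,x_2,x_3,x_4+x_5)$. Then $\mathcal{F}\cap\mathfrak{p}=\{x_1,x_2,x_3,L\}$, so $\mu\big((I_{2,\mathcal{F}})_{\mathfrak p}\big)=\binom{4}{3}=4=\dim R_{\mathfrak p}$ and your numerical test is passed; yet $(I_{2,\mathcal{F}})_{\mathfrak p}$ is \emph{not} of linear type, since localizing further at $\mathfrak{q}=(x_1,x_2,x_3)\subseteq\mathfrak{p}$ yields a four-generated ideal in a three-dimensional local ring, violating $G_\infty$. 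Your argument never looks below $\mathfrak{p}$, so it would wrongly certify linear type there. Your fallback appeal to \cref{morey-ulrich} does not repair this: that theorem concludes fiber type, not linear type, and requires $\mu\ge n+1$.

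The missing idea --- and the heart of the paper's proof --- is to descend from $\mathfrak{p}$ to the prime $\mathfrak{q}$ generated by the linear forms in $\mathcal{F}\cap\mathfrak{p}$. There $\het\,\mathfrak{q}$ \emph{equals} the length of a maximal regular sequence in $\mathcal{F}\cap\mathfrak{q}=\mathcal{F}\cap\mathfrak{p}$, so \cref{lineartype} genuinely applies, and since $\het\,\mathfrak{q}\le\het\,\mathfrak{p}\le s-1$ one may invoke the $G_s$ hypothesis at $\mathfrak{q}$ rather than at $\mathfrak{p}$: arguing by contradiction, failure of linear type at $\mathfrak{p}$ forces $\mu\big((I_{c,\mathcal{F}})_{\mathfrak q}\big)>\het\,\mathfrak{q}$ (for $c\ge3$ by the generator count, for $c=2$ via \cref{remarkc=2} after noting $\mathcal{F}\cap\mathfrak{p}$ cannot be a regular sequence), contradicting $G_s$. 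Without this descent your bridge from the numerical condition back to linear type does not close.
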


 \begin{proof}
   By \cref{lintypeGinfty}, condition (1) always implies condition (2). Thus assume that $I_{c,\mathcal{F}}$ satisfies the $G_s$ condition. By way of contradiction, say that there exists a prime ideal $\mathfrak{p}$ of height $\leq s-1$ such that $(I_{c,\mathcal{F}})_{\mathfrak{p}}$ is not of linear type. By \cref{localization}, this implies $|\mathcal{F} \cap \mathfrak{p}| > c.$ Call $\mathfrak{q}$ the ideal generated by the elements of $\mathcal{F} \cap \mathfrak{p}$. Clearly $\mathfrak{q}$ is prime since the elements of $\mathcal{F}$ are linear forms and $\mathfrak{q} \subseteq \mathfrak{p}$. Furthermore $\mathcal{F} \cap \mathfrak{q}= \mathcal{F} \cap \mathfrak{p}$ and thus by \cref{localization}, $(I_{c,\mathcal{F}})_{\mathfrak{q}}$ is a star configuration of height $c$. We want to show that $\,\mu((I_{c,\mathcal{F}})_{\mathfrak{q}}) > \het \mathfrak{q}$; since $\het \mathfrak{q} \leq s-1,\,$ this would contradict the assumption that $I_{c,\mathcal{F}}$ satisfies the $G_s$ condition.
   
   Notice that the height of $\mathfrak{q}$ is equal to the length of a maximal regular sequence contained in $\mathcal{F} \cap \mathfrak{p}$ and for this reason $(I_{c,\mathcal{F}})_{\mathfrak{q}}$ satisfies the assumptions of \cref{lineartype}.
   Hence, from \cref{lineartype} it follows immediately that $\,\mu((I_{c,\mathcal{F}})_{\mathfrak{q}}) > \het \mathfrak{q}$ whenever $c \geq 3$. If $c=2$, observe that $\,\mu((I_{c, \mathcal{F}})_{\mathfrak{q}}) = |\mathcal{F} \cap \mathfrak{q}| = |\mathcal{F} \cap \mathfrak{p}|$ and it suffices to show that $\mathcal{F} \cap \mathfrak{p}$ is not a regular sequence. But if this were a regular sequence, $(I_{c,\mathcal{F}})_{\mathfrak{p}}$ would be a star configuration of height 2 generated over a regular sequence, hence it would be of linear type by \cref{remarkc=2}.
 \end{proof} 
  
 We now describe the set of primes at which $I_{c, \mathcal{F}}$ fails to be of linear type. We call this set of primes the \emph{non-linear type locus} of $I_{c, \mathcal{F}}$ and denote it by
  $$ NLT(I_{c, \mathcal{F}}) = \lbrace \mathfrak{p} \in \Spec(R) \mbox{ : } (I_{c, \mathcal{F}})_{\mathfrak{p}} \mbox{ is not of linear type}\rbrace.  $$

 \begin{prop}
   \label{nonlineartypelocus} \hypertarget{nonlineartypelocus}{}
  Let $R=K[x_1, \ldots, x_n]$ and let $I_{c, \mathcal{F}}$ be a linear star configuration of height $c$.
  Then, any set $\,\mathcal{H} \subseteq \mathcal{F}\,$ of cardinality $s \leq n$ that is not a regular sequence generates a prime ideal $\mathfrak{q} \in NLT(I_{c, \mathcal{F}}) \setminus \lbrace (x_1,\ldots, x_n)\rbrace$.
  
  Moreover, if $c=2$ the minimal elements of $NLT(I_{2, \mathcal{F}})$ that are not maximal ideals of $R$ are all of this form.
  If instead $c \geq 3$, then $\,NLT(I_{c, \mathcal{F}}) =\lbrace \mathfrak{p} \in \Spec(R) \mbox{ : } | \mathcal{F} \cap \mathfrak{p} | > c \rbrace $.
 \end{prop}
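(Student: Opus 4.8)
The plan is to treat the three assertions with a single uniform device. By \cref{localization}, for any prime $\mathfrak p$ the localization $(I_{c,\mathcal F})_{\mathfrak p}$ is, up to a complete intersection or the unit ideal, again the ideal of a star configuration of height $c$, now on $\mathcal F' := \mathcal F \cap \mathfrak p$; and by \cref{lineartype} such an ideal can fail to be of linear type only through $\mu(I_{c,\mathcal F'}) = \binom{|\mathcal F'|}{|\mathcal F'|-c+1}$ (see \cref{generatorsI}) exceeding the dimension of the ambient ring, i.e.\ through a violation of $G_\infty$ (cf.\ \cref{lintypeGinfty}). To make \cref{lineartype}, which is stated over a polynomial ring, applicable at a prime $\mathfrak q$ generated by linear forms, I would first change coordinates so that $\mathfrak q = (x_1,\dots,x_h)$ with $h = \het\mathfrak q$ and all forms of $\mathcal F\cap\mathfrak q$ lie in $\langle x_1,\dots,x_h\rangle$; then $I_{c,\mathcal F\cap\mathfrak q}$ is extended from $\bar R := K[x_1,\dots,x_h]$, its minimal number of generators is the same in $\bar R$, in $R$, and in $R_{\mathfrak q}$ (the ideal being equigenerated, with all generators in $\mathfrak q$), and $\dim\bar R = h$ is exactly the number against which I must compare $\mu$.

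For the first assertion, let $\mathcal H\subseteq\mathcal F$ with $|\mathcal H| = s \le n$ not a regular sequence, and set $\mathfrak q = (\mathcal H)$. This is prime since the elements of $\mathcal H$ are linear forms, and linear dependence of $\mathcal H$ gives $\het\mathfrak q = \operatorname{rank}\mathcal H < s \le n$, so $\mathfrak q \ne (x_1,\dots,x_n)$. Because every $(c+1)$-subset of $\mathcal F$ is a regular sequence, $\mathcal H$ failing to be one forces $s \ge c+2$; and $\mathcal F' := \mathcal F\cap\mathfrak q \supseteq \mathcal H$ satisfies $(\mathcal F') = \mathfrak q$ and $|\mathcal F'| \ge s > c$, so by \cref{localization} $(I_{c,\mathcal F})_{\mathfrak q} = I_{c,\mathcal F'}R_{\mathfrak q}$ is a star configuration of height $c$ on $\mathcal F'$. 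Carrying out the reduction above, in $\bar R = K[x_1,\dots,x_h]$ (with $h = \het\mathfrak q$) the forms of $\mathcal F'$ span $\bar R_1$, so $\mathcal F'$ contains a regular sequence of length $h = \dim\bar R$ and $I_{c,\mathcal F'}$ satisfies the hypotheses of \cref{lineartype}; since $\mathcal F'$ is linearly dependent (it contains $\mathcal H$), we have $h < |\mathcal F'|$, whence \cref{lineartype} gives $\mu(I_{c,\mathcal F'}) = \binom{|\mathcal F'|}{|\mathcal F'|-c+1} \ge |\mathcal F'| > h$. As this number is unchanged over $R_{\mathfrak q}$, the ideal $(I_{c,\mathcal F})_{\mathfrak q}$ violates $G_\infty$, hence is not of linear type; so $\mathfrak q \in NLT(I_{c,\mathcal F})\setminus\{(x_1,\dots,x_n)\}$.

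For $c \ge 3$ one inclusion is immediate: if $\mathfrak p \in NLT(I_{c,\mathcal F})$ then $(I_{c,\mathcal F})_{\mathfrak p}$ is neither the unit ideal nor a complete intersection (both of which are of linear type), so \cref{localization} forces $|\mathcal F\cap\mathfrak p| > c$. For the reverse inclusion, given $\mathfrak p$ with $|\mathcal F\cap\mathfrak p| > c$, I would pass to $\mathfrak q := (\mathcal F\cap\mathfrak p)$, observe that $\mathcal F\cap\mathfrak q = \mathcal F\cap\mathfrak p$, and repeat the $\mu$-computation: in $\bar R = K[x_1,\dots,x_h]$ with $h = \het\mathfrak q$ and $t'' := |\mathcal F\cap\mathfrak p| \ge c+1 \ge 4$, \cref{lineartype} gives $\mu(I_{c,\mathcal F\cap\mathfrak p}) = \binom{t''}{t''-c+1}$, which for $c\ge 3$ is $> t'' \ge h$; thus $\mathfrak q\in NLT$, and since $(I_{c,\mathcal F})_{\mathfrak q}$ is a localization of $(I_{c,\mathcal F})_{\mathfrak p}$, also $\mathfrak p\in NLT$. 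For $c = 2$: the primes $\mathfrak q$ of the stated form lie in $NLT(I_{2,\mathcal F})\setminus\{(x_1,\dots,x_n)\}$ by the first assertion. Conversely, let $\mathfrak p$ be minimal in $NLT(I_{2,\mathcal F})$ and not a maximal ideal. By \cref{localization}, using that complete intersections and the unit ideal are of linear type, we get $|\mathcal F\cap\mathfrak p| \ge 3$; if $\mathcal F\cap\mathfrak p$ were a regular sequence then $I_{2,\mathcal F\cap\mathfrak p}$ would be of linear type by \cref{remarkc=2}, hence so would its flat extension $(I_{2,\mathcal F})_{\mathfrak p}$ --- a contradiction. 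Thus $\mathcal F\cap\mathfrak p$ is not a regular sequence, $|\mathcal F\cap\mathfrak p| \ge 4$, and with $\mathfrak q := (\mathcal F\cap\mathfrak p)$ the argument above gives $\mathfrak q\in NLT$; since $\het\mathfrak p < n$ (maximal ideals of a polynomial ring have height $n$) we have $\mathfrak q\ne(x_1,\dots,x_n)$, and minimality of $\mathfrak p$ forces $\mathfrak p = \mathfrak q = (\mathcal F\cap\mathfrak p)$. Finally, if $|\mathcal F\cap\mathfrak p|\le n$ take $\mathcal H = \mathcal F\cap\mathfrak p$; otherwise take $\mathcal H$ to consist of $\het\mathfrak p$ linearly independent elements of $\mathcal F\cap\mathfrak p$ (which span $\mathfrak p_1$) together with one further element, so that $|\mathcal H| = \het\mathfrak p + 1 \le n$, $(\mathcal H) = \mathfrak p$, and $\mathcal H$ is linearly dependent, hence not a regular sequence.

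I expect the main obstacle to be the bookkeeping around the two applications of results proved for polynomial rings --- \cref{lineartype} and \cref{remarkc=2} --- at the local rings $R_{\mathfrak p}$: one must check that after a coordinate change the ideal in question is extended from $\bar R = K[x_1,\dots,x_h]$, that ``being of linear type'' ascends along the flat extensions $\bar R \hookrightarrow R \to R_{\mathfrak p}$ used here, and that the minimal number of generators is unaffected by these extensions. The second delicate point is the genuine difference between $c=2$ and $c\ge 3$: for $c=2$ one must exclude the possibility that $\mathcal F\cap\mathfrak p$ is a regular sequence (which would make $(I_{2,\mathcal F})_{\mathfrak p}$ of linear type), and this is exactly where \cref{remarkc=2} is needed, after which the elementary replacement of $\mathcal F\cap\mathfrak p$ by a linearly dependent subset of size at most $n$ generating the same prime completes the argument.
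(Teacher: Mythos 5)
Your proof is correct and follows essentially the same route as the paper: reduce via \cref{localization} to the star configuration on $\mathcal{F}\cap\mathfrak{p}$, compare $\mu$ (computed from \cref{generatorsI}) with the height of the prime generated by $\mathcal{F}\cap\mathfrak{p}$ to violate $G_\infty$, and invoke \cref{remarkc=2} to rule out the regular-sequence case when $c=2$. The only differences are presentational --- you make explicit the coordinate change and preservation of $\mu$ under the flat extensions, and verify $\binom{t''}{t''-c+1}>t''$ directly rather than citing \cref{lineartype} --- and these checks are sound.
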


 \begin{proof}
  First consider a set $\mathcal{H} \subseteq \mathcal{F}$ of cardinality $s \leq n$ that is not a regular sequence. Since $I_{c, \mathcal{F}}$ is a star configuration, clearly $|\mathcal{H}| > c+1$.
  The elements of $\mathcal{H}$ are linear forms, thus they generate a prime ideal $\mathfrak{q}$ of $R$ which is clearly non-maximal since it has depth strictly smaller than $n$. By \cref{localization}, 
    $$\het(\mathfrak{q}) < |\mathcal{H}| \leq |\mathcal{F} \cap \mathfrak{q}| \leq \mu((I_{c, \mathcal{F}})_{\mathfrak{q}}).$$ 
  Hence,  $(I_{c, \mathcal{F}})_{\mathfrak{q}}$ is not of linear type.

  In the case $c=2$, to show that all minimal primes in $NLT(I_{2, \mathcal{F}})$ arise in this way, let $\mathfrak{p}$ be a non-maximal prime ideal of $R$ such that $(I_{2, \mathcal{F}})_{\mathfrak{p}}$ is not of linear type. By \cref{localization}, $(I_{2, \mathcal{F}})_{\mathfrak{p}}$ is the star configuration of height 2 over the set $\mathcal{F} \cap \mathfrak{p}$ in the ring $R_{\mathfrak{p}}$.
  If $|\mathcal{F} \cap \mathfrak{p}| > n$, consider any subset $\mathcal{H} \subseteq (\mathcal{F} \cap \mathfrak{p})$ of cardinality $n$.
  Since $\mathfrak{p}$ is non-maximal, necessarily $\mathcal{H}$ is not a regular sequence. Then the prime ideal $\mathfrak{q}$ generated by the elements of $\mathcal{H}$ is contained in $\mathfrak{p}$ and is such that $(I_{2, \mathcal{F}})_{\mathfrak{q}}$ is not of linear type by the first part of this proof.

  If, instead, $|\mathcal{F} \cap \mathfrak{p}| \leq n$, call $\mathfrak{q}$ the prime ideal generated by all the elements of $\mathcal{F} \cap \mathfrak{p}$. Clearly $\mathfrak{q} \subseteq \mathfrak{p}$ and  $\mathcal{F} \cap \mathfrak{p}= \mathcal{F} \cap \mathfrak{q}$. Notice that $\mathcal{F} \cap \mathfrak{p}$ is not a regular sequence, since by \cref{remarkc=2} a star configuration of height 2 generated over a regular sequence is always of linear type.
  Therefore,
    $$ \het(\mathfrak{q}) < |\mathcal{F} \cap \mathfrak{p}|= |\mathcal{F} \cap \mathfrak{q}| = \mu((I_{2, \mathcal{F}})_{\mathfrak{q}}). $$
  Hence $(I_{2, \mathcal{F}})_{\mathfrak{q}}$ is not of linear type. 

  When $c \geq 3$, choose a non-maximal prime ideal $\mathfrak{p}.$ By \cref{localization}, if $|\mathcal{F} \cap \mathfrak{p}| \leq c$, then $(I_{c, \mathcal{F}})_{\mathfrak{p}}$ is of linear type. Otherwise, if $|\mathcal{F} \cap \mathfrak{p}| > c$, let $\mathfrak{q}$ be the prime ideal generated by the elements of $\mathcal{F} \cap \mathfrak{p}$. Proceeding as in the proof of \cref{Gs}, we can apply \cref{lineartype} to deduce that $(I_{c, \mathcal{F}})_{\mathfrak{q}}$ is not of linear type. Since $\mathfrak{q} \subseteq \mathfrak{p}$, then also $(I_{c, \mathcal{F}})_{\mathfrak{p}}$ is not of linear type.
 \end{proof}

 We next apply the previous result to characterize when $I_{c, \mathcal{F}}$ satisfies the $G_n$ condition.

 \begin{thm}
  \label{Gngeneral} \hypertarget{Gngeneral}{}
    Let $R=K[x_1, \ldots, x_n]$ and let $I_{c, \mathcal{F}}$ be a linear star configuration of height $c$.
  \begin{enumerate}[\rm(1)]
    \item The ideal $I_{c, \mathcal{F}}$ satisfies the $G_n$ condition. 
 
    \item Every subset of $\mathcal{F}$ of cardinality $n$ is a regular sequence and $c \in \{ 2, n-1 \}.$
  \end{enumerate}
 \end{thm}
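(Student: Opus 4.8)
The plan is to prove the equivalence of (1) and (2) by reducing the $G_n$ condition to a statement about the non-linear type locus $NLT(I_{c,\mathcal F})$ and then feeding in \cref{nonlineartypelocus}. First I would apply \cref{Gs} with $s=n$: the ideal $I_{c,\mathcal F}$ satisfies $G_n$ if and only if $(I_{c,\mathcal F})_{\mathfrak p}$ is of linear type for every prime $\mathfrak p$ of $R$ with $\het\mathfrak p\le n-1$. Since $R=K[x_1,\dots,x_n]$ is a catenary equidimensional affine domain, every maximal ideal of $R$ has height $n$; hence the primes of height at most $n-1$ are precisely the non-maximal primes. Therefore $I_{c,\mathcal F}$ satisfies $G_n$ if and only if $NLT(I_{c,\mathcal F})$ contains no non-maximal prime, and the argument splits into the cases $c\ge 3$ and $c=2$, using in each the description of $NLT(I_{c,\mathcal F})$ provided by \cref{nonlineartypelocus}.

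For $c\ge 3$, \cref{nonlineartypelocus} gives $NLT(I_{c,\mathcal F})=\{\mathfrak p\in\Spec(R):|\mathcal F\cap\mathfrak p|>c\}$, and I claim that $G_n$ is then equivalent to $c=n-1$, which for $c\ge 3$ is exactly condition (2): when $c=n-1$ the $n$-element subsets of $\mathcal F$ are the $(c+1)$-element ones, hence regular sequences by the definition of a star configuration. If $c\ne n-1$ then $c\le n-2$ (any $c+1$ elements of $\mathcal F$ form a regular sequence, so $c+1\le n$), and choosing $c+1$ of them produces a prime ideal $\mathfrak q$ of height $c+1\le n-1$, necessarily non-maximal, with $|\mathcal F\cap\mathfrak q|\ge c+1>c$; thus $\mathfrak q\in NLT(I_{c,\mathcal F})$ and $G_n$ fails. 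Conversely, if $c=n-1$, any $n$ elements of $\mathcal F$ form a regular sequence of $n$ linear forms, hence are linearly independent and generate a maximal ideal of $R$; so any prime $\mathfrak p$ with $|\mathcal F\cap\mathfrak p|\ge n$ contains such a maximal ideal and is therefore itself maximal, whence $NLT(I_{c,\mathcal F})$ contains no non-maximal prime and $G_n$ holds.

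For $c=2$, I would first note that $NLT(I_{2,\mathcal F})$ is stable under specialization: if $(I_{2,\mathcal F})_{\mathfrak p}$ is not of linear type and $\mathfrak p\subseteq\mathfrak p'$, then $(I_{2,\mathcal F})_{\mathfrak p'}$ localizes at $\mathfrak pR_{\mathfrak p'}$ to $(I_{2,\mathcal F})_{\mathfrak p}$, which is not of linear type, and since being of linear type passes to localizations, $(I_{2,\mathcal F})_{\mathfrak p'}$ is not of linear type either. Consequently $NLT(I_{2,\mathcal F})$ contains a non-maximal prime if and only if it contains a minimal one, and by \cref{nonlineartypelocus} the non-maximal minimal primes of $NLT(I_{2,\mathcal F})$ are precisely the ideals generated by subsets $\mathcal H\subseteq\mathcal F$ of cardinality at most $n$ that are not regular sequences. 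Hence $I_{2,\mathcal F}$ satisfies $G_n$ if and only if every subset of $\mathcal F$ of cardinality at most $n$ is a regular sequence. Since for linear forms a subset of cardinality $s\le n$ is a regular sequence exactly when its members are linearly independent, and since linear dependence of a set is inherited by every set containing it, this is equivalent to requiring only that every subset of $\mathcal F$ of cardinality $n$ be a regular sequence. Together with the (automatic) membership $c=2\in\{2,n-1\}$, this yields the equivalence in this case.

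The step I expect to be the main obstacle is the case $c=2$, and within it the two bookkeeping ingredients: that the non-linear type locus is upward closed, so that \cref{nonlineartypelocus}'s description of its minimal non-maximal primes controls the full locus; and the passage from ``every subset of cardinality at most $n$'' to ``every subset of cardinality exactly $n$'', which rests on the elementary fact that linear dependence propagates to supersets (and tacitly on $|\mathcal F|\ge n$). By contrast, once the reduction to $NLT(I_{c,\mathcal F})$ via \cref{Gs} is set up, the case $c\ge 3$ is essentially a direct verification.
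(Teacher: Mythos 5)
Your proof is correct and follows essentially the same route as the paper: it reduces the $G_n$ condition to the absence of non-maximal primes in $NLT(I_{c,\mathcal F})$ via \cref{Gs} and then invokes \cref{nonlineartypelocus}, differing only organizationally (you treat all $c\ge 3$ uniformly by localizing at the prime generated by $c+1$ elements of $\mathcal F$, where the paper separates $c=n-1$ from $2<c<n-1$ and uses an $(n-1)$-element subset, and you spell out the $c=2$ reduction that the paper leaves implicit). The tacit assumption $|\mathcal F|\ge n$ that you flag in the $c=2$ case is equally present in the paper's own argument, so it is not a gap relative to the paper.
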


 \begin{proof}
   Recall that by \cref{Gs}, $I_{c, \mathcal{F}}$ satisfies the $G_n$ condition if and only if $NLT(I_{c, \mathcal{F}})= \lbrace (x_1, \ldots, x_n) \rbrace$.

   First assume that there exists one subset of cardinality $n$ of $\mathcal{F}$ that is not a regular sequence. By \cref{nonlineartypelocus}, it then follows that $NLT(I_{c, \mathcal{F}})$ contains some non-maximal prime ideal of $R$. Thus $I_{c, \mathcal{F}}$ does not satisfy the $G_n$ condition.

   Hence, we can assume that every subset of $\mathcal{F}$ of cardinality $n$ is a regular sequence. 
   If $c=2$, the conclusion follows by \cref{nonlineartypelocus}. 
   If $c=n-1$, let $\mathfrak{p}$ be a non-maximal prime ideal of $R$. We show that in this case $|\mathcal{F} \cap \mathfrak{p}| \leq c$ and conclude that $(I_{c, \mathcal{F}})_{\mathfrak{p}}$ is of linear type by \cref{localization}. Indeed, $|\mathcal{F} \cap \mathfrak{p}| > c=n-1$ if and only if at least $n$ distinct elements of $\mathcal{F}$ are in $\mathfrak{p}$. But, since $I_{n-1, \mathcal{F}}$ is a star configuration, any $n$ distinct elements of $\mathcal{F}$ form a regular sequence and hence they cannot be contained in a non-maximal prime ideal of $R$.

   Finally, if $2 < c < n-1$, 
   let $\mathcal{H}$ be a subset of $\mathcal{F}$ of cardinality $n-1$, and let $\mathfrak{p}$ be the prime ideal generated by the elements of $\mathcal{H}$. Since every subset of $\mathcal{F}$ of cardinality $n$ is a regular sequence, it follows that any $F_i \in \mathcal{F} \setminus \mathcal{H}$ is regular modulo $\mathfrak{p}$, hence is not in $\mathfrak{p}$. Therefore,  $\mathcal{F} \cap \mathfrak{p} = \mathcal{H}$ has cardinality $n-1 > c $, hence the conclusion follows from \cref{nonlineartypelocus}.
 \end{proof}

 \begin{cor}
  Let $R=K[x_1, \ldots, x_n]$ and assume that any subset of $\mathcal{F}$ of cardinality $n$ is a regular sequence. Let $I_{2, \mathcal{F}}$ be a linear star configuration of height $2$. Then $I$ is of fiber type and the defining ideal of $\mathcal{R}(I)$ is given by 
    $$\mathcal{J}= \mathcal{L}+I_n(B(M)),$$ 
  where $B(M)$ is the ideal of maximal minors of the Jacobian dual of $M$. Moreover, $\mathcal{R}(I)$ and $F(I)$ are Cohen-Macaulay.  
 \end{cor}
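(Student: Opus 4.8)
The plan is to derive the statement by assembling \cref{generatorsI}, \cref{Gngeneral}, and \cref{morey-ulrich}. Write $I=I_{2,\mathcal{F}}$. First I would record the structural properties of $I$ needed to invoke \cref{morey-ulrich}: by \cref{generatorsI}(1) we have $\mu(I)=\binom{t}{t-1}=t$, and by \cref{generatorsI}(2)--(3) the ideal $I$ is perfect of height $2$ with a linear resolution, so a Hilbert--Burch matrix $M$ of $I$ has linear entries and the Jacobian dual $B(M)$ is well defined. Since by hypothesis every subset of $\mathcal{F}$ of cardinality $n$ is a regular sequence and $c=2\in\{2,n-1\}$, \cref{Gngeneral} will give that $I$ satisfies the $G_n$ condition.

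Next I would apply \cref{morey-ulrich}, separating two cases according to $t=|\mathcal{F}|$ (the hypothesis on $n$-subsets of $\mathcal{F}$ being understood as non-vacuous, so that $t\geq n$). If $t\geq n+1$, then $\mu(I)=t\geq n+1$ and $I$ is a linearly presented perfect ideal of height $2$ satisfying $G_n$; after the routine reduction to an infinite ground field via the faithfully flat extension $R\hookrightarrow K(u)[x_1,\ldots,x_n]$ --- which preserves the formation of $\mathcal{R}(I)$, the $G_n$ property, Cohen--Macaulayness, and the ideals $\mathcal{L}$ and $I_n(B(M))$ --- \cref{morey-ulrich} yields exactly $\mathcal{J}=\mathcal{L}+I_n(B(M))$, the fiber-type property of $I$, and the Cohen--Macaulayness of $\mathcal{R}(I)$ and $F(I)$. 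If instead $t=n$, then $\mathcal{F}$ is itself a regular sequence, so by \cref{remarkc=2} the ideal $I$ is of linear type with $\mathcal{R}(I)$ Cohen--Macaulay; in this case $B(M)$ is an $n\times(n-1)$ matrix, so $I_n(B(M))=0$ and $\mathcal{J}=\mathcal{L}=\mathcal{L}+I_n(B(M))$ still holds, while $F(I)\cong K[T_1,\ldots,T_n]$ is a polynomial ring and hence Cohen--Macaulay. As an alternative route to the fiber-type and Cohen--Macaulay assertions I could instead cite \cref{garrousian-simis-tohaneanu}(1),(3), since $I=J_{t-1,\mathcal{F}}$ is generated by the $(t-1)$-fold products of the $F_i$'s.

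I do not expect a genuine obstacle here: the substantive content has already been isolated in \cref{Gngeneral} (the implication that every $n$-element subset of $\mathcal{F}$ being a regular sequence forces $I_{2,\mathcal{F}}$ to satisfy $G_n$) and in the resolution data of \cref{generatorsI}. What remains is bookkeeping --- verifying the numerical hypothesis $\mu(I)\geq n+1$ of \cref{morey-ulrich}, handling the boundary case $t=n$ separately, and performing the standard base change to an infinite field.
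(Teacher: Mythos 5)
Your proposal is correct and follows essentially the same route as the paper, whose entire proof is to invoke \cref{Gngeneral} to obtain the $G_n$ condition and then apply \cref{morey-ulrich}. Your additional bookkeeping (verifying $\mu(I)=t\geq n+1$, splitting off the boundary case $t=n$ via \cref{remarkc=2}, and the base change to an infinite field) is sound and in fact more careful than the paper's two-line argument, which silently assumes these points.
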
 

 \begin{proof}
   From \cref{Gngeneral} it follows that $I$ satisfies the $G_n$ condition. Then the thesis follows from \cref{morey-ulrich}. 
 \end{proof}

 In the next section we give a more accurate description of the Rees algebra of linear star configurations of height two, providing an explicit generating set for the ideal of maximal minors of the Jacobian dual. This also allows us to determine the defining ideal in the case when the $G_n$ condition is not satisfied. For this purpose, it will be convenient to better control the subsets of regular sequences contained in $\mathcal{F}$. The following lemma will be useful. 

 \begin{lem} \label{linearsettinglemma} \hypertarget{linearsettinglemma}{}
   Let $K$ be an infinite field and let $I_{c,\mathcal{F}}$ be a linear star configuration of height $c$ defined over the set $\mathcal{F}=\lbrace F_1, \ldots, F_{t} \rbrace \subseteq K[y_1,\ldots, y_d]$. Assume that the maximal regular sequence contained in $\mathcal{F}$ has length $n$. 
   Then, after renaming the variables of the ring, we can always assume
     $$\mathcal{F}= \lbrace F_1, \ldots, F_{t} \rbrace = \lbrace x_1, \ldots, x_n, L_{1}, \ldots, L_{r} \rbrace \subseteq K[x_1,\ldots, x_n]$$ 
   where $L_{1}, \ldots, L_{r} \in (x_1, \ldots, x_n)$ are linear forms.
 \end{lem}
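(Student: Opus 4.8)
The plan is to exploit the fact that a maximal regular sequence inside a set of linear forms is, for a polynomial ring, the same thing as a maximal linearly independent subset. First I would observe that since $F_1,\dots,F_t$ are homogeneous linear forms in $K[y_1,\dots,y_d]$, a subset $\{F_{i_1},\dots,F_{i_s}\}$ is a regular sequence if and only if the corresponding linear forms are $K$-linearly independent: indeed, for linear forms in a polynomial ring, $(F_{i_1},\dots,F_{i_s})$ has height $s$ exactly when the forms are independent, and a sequence of linear forms generating an ideal of the "expected" height is automatically regular (a system of parameters inside a regular local/graded ring consisting of linear forms is a regular sequence). Hence the hypothesis that the maximal regular sequence in $\mathcal{F}$ has length $n$ is precisely the statement that the $K$-span $V \coloneq \sum_{i=1}^t K F_i \subseteq (y_1,\dots,y_d)_1$ has dimension $n$.

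Next I would perform the change of coordinates. After reordering, assume $F_1,\dots,F_n$ is a maximal independent subset, so $\{F_1,\dots,F_n\}$ is a $K$-basis of $V$. Extend this to a $K$-basis $F_1,\dots,F_n,w_{n+1},\dots,w_d$ of the degree-one part $(y_1,\dots,y_d)_1$; this is possible since $V$ sits inside the $d$-dimensional vector space of linear forms. Now define a new set of variables by the invertible linear change $x_j \coloneq F_j$ for $1\le j\le n$ and $x_j \coloneq w_j$ for $n<j\le d$. Because this is an invertible linear substitution, $K[y_1,\dots,y_d]=K[x_1,\dots,x_d]$, and the ring map sends the ideal of the star configuration to the ideal of the star configuration on the transformed set, so nothing about $I_{c,\mathcal{F}}$ is lost. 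In these coordinates $F_j = x_j$ for $j\le n$, and each remaining $F_i$ (with $i>n$) lies in $V = K x_1+\cdots+K x_n$, hence is a linear form $L_{i-n}$ in $x_1,\dots,x_n$ only, with $L_{i-n}\in(x_1,\dots,x_n)$. Finally, since $L_1,\dots,L_r$ involve only $x_1,\dots,x_n$, we may harmlessly view everything inside $K[x_1,\dots,x_n]$: the extra variables $x_{n+1},\dots,x_d$ do not appear in any $F_i$, so they play no role in $I_{c,\mathcal{F}}$ and can be discarded (formally, localize/restrict, or just note that the star configuration ideal is extended from $K[x_1,\dots,x_n]$). This yields exactly the asserted normal form $\mathcal{F}=\{x_1,\dots,x_n,L_1,\dots,L_r\}\subseteq K[x_1,\dots,x_n]$.

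The only genuine point requiring care — the "main obstacle," though it is minor — is the equivalence between "regular sequence of linear forms" and "linearly independent linear forms," which is where the polynomial-ring (equivalently, domain / Cohen--Macaulay) hypothesis enters: in an arbitrary ring linearly independent elements need not form a regular sequence, but in $K[y_1,\dots,y_d]$ any linearly independent set of linear forms can be completed to a system of coordinates and hence is a regular sequence, while conversely a regular sequence must in particular be linearly independent (a linear dependence would give a nonzero element of the relevant Koszul homology / an obvious zerodivisor relation). I would also remark that the hypothesis that $K$ is infinite is not strictly needed for this particular lemma — the argument is pure linear algebra over $K$ — but it is stated for consistency with its use in the next section, where generic-type arguments do require it. One should double-check the degenerate cases $r=0$ (i.e. $\mathcal{F}$ itself is a regular sequence, and there is nothing to do beyond the coordinate change) and $n=d$ (no basis extension needed), both of which are immediate.
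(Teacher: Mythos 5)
Your proposal is correct and follows essentially the same route as the paper: relabel so that a maximal regular sequence becomes $F_1,\dots,F_n$, change coordinates so these become $x_1,\dots,x_n$, and use maximality (equivalently, that $F_1,\dots,F_n$ span the $K$-vector space generated by all the $F_i$) to conclude that the remaining forms lie in $(x_1,\dots,x_n)$. Your explicit justification of the equivalence ``regular sequence of linear forms $\Leftrightarrow$ linearly independent linear forms,'' and your remark that the infinite-field hypothesis is not needed here, are accurate refinements of the paper's terser argument but do not change the substance.
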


\begin{proof}
  By relabeling the indices assume that $ F_1, \ldots, F_{n} $ is a regular sequence of maximal length contained in $\mathcal{F}$.
  After a linear change of variables, this regular sequence of linear forms can be always expressed as $ x_1, \ldots, x_n $. 
  If some linear form $F_j$ with $j > n$ has a monomial of the form $y_k$, with $y_k$ distinct from $ x_1, \ldots, x_n $, then clearly $F_j,x_1, \ldots, x_n$ is a regular sequence of length $n+1$ contradicting the hypothesis. It follows that $F_{n+1}, \ldots, F_t$ must be contained in $(x_1, \ldots, x_n)$.
\end{proof}


 Thanks to \cref{linearsettinglemma}, whenever the $F_i$'s are linear forms  we can always reduce to the following setting.  
  
 \begin{setting} \label{linearsetting} \hypertarget{linearsetting}{}
   Let $K$ be an infinite field and let $R=K[x_1,\ldots, x_n]$. Assume that $$\mathcal{F}= \lbrace F_1, \ldots, F_{t} \rbrace = \lbrace x_1, \ldots, x_n, L_{1}, \ldots, L_{r} \rbrace \subseteq R $$ where $L_i \coloneq \sum_{j=1}^n u_{ij}x_j$ with $u_{ij} \in K$ and $t=n+r$. Let $U$ denote the $n \times r$ matrix on the elements $u_{ij}$ and let $I_{c, \mathcal{F}} \subseteq R$ be a star configuration on $\mathcal{F}$.
 \end{setting}

\begin{prop} \label{sregseq-minorsU} \hypertarget{sregseq-minorsU}{}
 Fix $2 \leq s \leq n$. With the assumptions and notations of \cref{linearsetting}, the following conditions are equivalent:
  \begin{enumerate}[\rm(1)]
   \item Any subset of $\mathcal{F}$ of cardinality $s$ is a regular sequence.
   \item For every $1 \leq h \leq \min \lbrace r,s \rbrace$, all the submatrices of the matrix $U$ of size $(h+n-s) \times h$ have maximal rank.
  \end{enumerate}
\end{prop}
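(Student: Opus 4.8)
The plan is to translate the condition ``$s$ elements of $\mathcal{F}$ form a regular sequence'' into a rank condition on an associated coefficient matrix, and then recognize that the relevant submatrices of that matrix are exactly the submatrices of $U$ described in (2). Since $R = K[x_1,\dots,x_n]$ is a polynomial ring and all elements of $\mathcal{F}$ are linear forms, a set of linear forms $\{G_1,\dots,G_s\}$ is a regular sequence if and only if it is linearly independent over $K$, i.e.\ if and only if the $s \times n$ matrix of their coefficients has rank $s$ (this forces $s \le n$, which is our hypothesis). So condition (1) is equivalent to: every $s \times n$ submatrix formed by choosing $s$ rows from the $t \times n$ coefficient matrix $\begin{bmatrix} \mathrm{Id}_n \\ U^{\mathsf T}\end{bmatrix}$ of $\mathcal{F}$ has rank $s$.

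Next I would analyze such a subset combinatorially. A size-$s$ subset of $\mathcal{F} = \{x_1,\dots,x_n,L_1,\dots,L_r\}$ consists of $s - h$ of the coordinate forms $x_j$ and $h$ of the forms $L_i$, for some $h$ with $\max\{0, s-n\} \le h \le \min\{r,s\}$; since $s \le n$ this is just $0 \le h \le \min\{r,s\}$. Fix such a choice: say the coordinate forms are $x_j$ for $j$ in a set $A$ with $|A| = s-h$, and the forms are $L_i$ for $i$ in a set $B$ with $|B| = h$. The coefficient matrix of this subset, after reordering, has a block of the form $\begin{bmatrix} \mathrm{Id}_{s-h} & 0 \\ * & V \end{bmatrix}$ where $V$ is the $h \times (n-(s-h))$ submatrix of $U^{\mathsf T}$ obtained by taking the rows indexed by $B$ and the columns indexed by the complement of $A$. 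By column reduction using the identity block, the rank of the full $s \times n$ matrix equals $(s-h) + \mathrm{rank}(V)$. Hence the subset is a regular sequence $\iff \mathrm{rank}(V) = h \iff V$ has full row rank, where $V$ is an $h \times (n-s+h)$ submatrix of $U^{\mathsf T}$, equivalently (transposing) an $(n-s+h) \times h$ submatrix of $U$ has full column rank, i.e.\ has maximal rank $h$.

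Finally I would assemble the equivalence. Condition (1) says every size-$s$ subset of $\mathcal{F}$ is a regular sequence; by the analysis above, as the subset ranges over all choices with a fixed value of $h$, the matrix $V$ ranges over \emph{all} $(h+n-s)\times h$ submatrices of $U$ (every choice of $h$ rows of $U^{\mathsf T}$, i.e.\ $h$ columns of $U$, and every choice of $h+n-s$ columns of $U^{\mathsf T}$, i.e.\ rows of $U$, arises), and as $h$ ranges over $1 \le h \le \min\{r,s\}$ we recover precisely the family of submatrices in (2). The case $h=0$ corresponds to a subset of coordinate forms alone, which is automatically a regular sequence, so it imposes no condition and may be omitted. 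Therefore (1) holds $\iff$ every submatrix of $U$ of size $(h+n-s)\times h$ has maximal rank for every $1 \le h \le \min\{r,s\}$, which is (2). The only delicate bookkeeping point — the step I expect to require the most care — is checking that the correspondence between size-$s$ subsets of $\mathcal{F}$ and submatrices of $U$ is genuinely surjective onto all submatrices of the stated sizes (so that no rank condition in (2) is vacuous or, conversely, missing); this is a matter of verifying the ranges of $h$, $|A|$, and the column/row index sets match up, and it is where the hypothesis $s \le n$ is used.
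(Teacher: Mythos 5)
Your proposal is correct and follows essentially the same route as the paper: both reduce a size-$s$ subset with $s-h$ coordinate forms and $h$ of the $L_i$'s to the corresponding $(h+n-s)\times h$ submatrix of $U$, the paper by passing modulo the chosen variables and you by the equivalent block column reduction against the identity block. Your explicit attention to the surjectivity of the subset--submatrix correspondence is the same observation the paper records separately in \cref{minors-regseq}.
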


\begin{proof}
Recall that a finite set of linear forms in $R$ is a regular sequence if and only if those forms are linearly independent over the base field $K$. This is equivalent to have that the matrix expressing their coefficients as a function of the variables $x_1, \ldots, x_n$ has maximal rank.

After fixing $s$, assume one submatrix $V$ of $U$ of size $(h+n-s) \times h$ has not maximal rank. For simplicity, up to permuting rows and columns, we may assume this to be the matrix obtained considering the first $h+n-s$ rows and the first $h$ columns of $U$ for some $h \leq \min \lbrace r,s \rbrace$. If $h=s$, this implies that the linear forms $L_1, \ldots, L_h $ are not linearly independent over $K$, thus not a regular sequence. Otherwise, if $h < s$, consider the set $\lbrace L_1, \ldots, L_h, x_{n+h-s+1}, \ldots, x_{n} \rbrace$. Going modulo the regular sequence $\lbrace x_{n+h-s+1}, \ldots, x_{n} \rbrace$, this set reduces to the set of linear forms $\lbrace \overline{L_1}, \ldots, \overline{L_h} \rbrace $. The matrix of coefficients of these linear forms with respect to $x_1, \ldots, x_{n+h-s}$ is exactly the matrix $V$. It follows that $ \overline{L_1}, \ldots, \overline{L_h} $ are not linearly independent, hence not a regular sequence. Therefore $\lbrace L_1, \ldots, L_h, x_{n+h-s+1}, \ldots, x_{n} \rbrace$ is not a regular sequence and has cardinality $s$.
 
Conversely, assume that condition 2 is satisfied. Consider a subset $\mathcal{H}$ of $\mathcal{F}$ of cardinality $s $. Possibly going modulo the forms of type $x_1, \ldots, x_n$ belonging to $\mathcal{H}$, we reduce to consider a set of linear forms $\lbrace \overline{L_{i_1}}, \ldots, \overline{L_{i_h}} \rbrace $ with $h \leq s$  and whose coefficients with respect to $\overline{x_1}, \ldots, \overline{x_n}$ form a submatrix of $U$ of size $(h+n-s) \times h$. It follows that those forms are linearly independent and hence a regular sequence. Thus also the elements of $\mathcal{H}$ form a regular sequence.
\end{proof}

\begin{remark}
\label{minors-regseq} \hypertarget{minors-regseq}{}
The proof of the \cref{sregseq-minorsU} shows that there is a one-to-one correspondence between the subsets of $\mathcal{F}$ of cardinality $s \leq n$ and the submatrices of $U$ of size $(h+n-s) \times h$. One of such subsets of $\mathcal{F}$ is a regular sequence if and only if the corresponding matrix has maximal rank.
 \end{remark}

\section{The maximal minors of the Jacobian dual of linear star configurations of height two}
 \label{sectionJacobiandual} \hypertarget{sectionJacobiandual}{}

The main goal of this section is to determine a minimal generating set for the ideal of maximal minors of the Jacobian dual of a presentation of $I_{2,\mathcal{F}}$ (see \cref{minorsJacDual}). In the case when $I_{2,\mathcal{F}}$ satisfies $G_n$, by \cref{morey-ulrich} these are the non-linear equations defining the Rees algebra of $I_{2,\mathcal{F}}$. In the next section, we will use 
\cref{minorsJacDual} to identify the non-linear equations of the Rees algebra also when the $G_n$ condition is not satisfied (see \cref{orlikterao}).

\begin{prop}
\label{jacobheight2} \hypertarget{jacobheight2}{}
With the assumptions and notations of \cref{linearsetting}, let $c=2$. There exists a presentation matrix $M$ of $I_{2,\mathcal{F}}$ whose Jacobian dual can be expressed in the following form:
  \begin{equation}
    \label{jacobiandual} \hypertarget{jacobiandual}{}
     \footnotesize B(M)= \bmatrix 
T_1 & 0 & \ldots & 0 &   u_{11}T_{n+1} &  u_{21}T_{n+2} & \ldots &  u_{r1}T_{n+r} \\ 
-T_2 & T_2 & \ldots & 0 &   u_{12}T_{n+1} &  u_{22}T_{n+2} & \ldots &  u_{r2}T_{n+r}\\  
0 & -T_3 & \ldots & \vdots & \vdots & \vdots & \ldots & \vdots  \\  
\vdots & 0 & \ldots & \vdots & \vdots & \vdots & \ldots & \vdots \\  
\vdots & \vdots & \ldots &  0 & \vdots & \vdots & \ldots & \vdots \\  
0 & 0 & \ldots &  T_{n-1} &  u_{1,n-1}T_{n+1} & u_{2,n-1}T_{n+2} & \ldots &  u_{r,n-1}T_{n+r} \\  
0 & 0 & \ldots &  -T_n &  (u_{1n}T_{n+1}-T_n) & (u_{2n}T_{n+2}-T_n) & \ldots &  (u_{rn}T_{n+r}-T_n)\\  
 \endbmatrix.  
\end{equation}
\end{prop}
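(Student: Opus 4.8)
The plan is to build the presentation matrix $M$ of $I_{2,\mathcal{F}}$ explicitly and then read off $B(M)$ from the defining equation $[x_1,\ldots,x_n]\cdot B(M)=[T_1,\ldots,T_t]\cdot M$. Recall from \cref{generatorsI} that, since $c=2$, the ideal $I_{2,\mathcal{F}}$ is minimally generated by the $t=n+r$ products $g_k=\prod_{j\neq k}F_j$, one for each omitted form. Because $I_{2,\mathcal{F}}$ is perfect of height two with a linear resolution, a Hilbert–Burch matrix $M$ has size $t\times(t-1)$ with linear entries, and the Koszul-type syzygies $F_\ell\, g_k - F_k\, g_\ell = 0$ (for $k\neq\ell$) supply the columns. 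The key bookkeeping step is to choose a convenient basis of $t-1$ of these syzygies: for $i=1,\ldots,n-1$ take the syzygy between $g_i$ and $g_{i+1}$ coming from $F_{i+1}g_i-F_ig_{i+1}=0$, i.e. $x_{i+1}g_i - x_i g_{i+1}=0$ (using $\mathcal{F}=\{x_1,\ldots,x_n,L_1,\ldots,L_r\}$ from \cref{linearsetting}); and for $i=1,\ldots,r$ take the syzygy between $g_{n+i}$ and $g_n$, namely $F_{n+i}g_n - F_n g_{n+i} = L_i\, g_n - x_n\, g_{n+i}=0$. This gives $n-1+r=t-1$ columns, and one checks they are a minimal generating set of the syzygy module (their $x$-linear-combination exhausting all Koszul syzygies, since the others are $R$-combinations of these via telescoping $x_j$'s and the relations $L_i=\sum_j u_{ij}x_j$).

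Next I would compute $B(M)$ column by column. For the first $n-1$ columns the syzygy relation is $x_{i+1}g_i - x_i g_{i+1}=0$; replacing $g_k$ by the variable $T_k$ and factoring out $[x_1,\ldots,x_n]$ forces the $i$-th column of $B(M)$ to be the vector with $T_i$ in row $i$ and $-T_{i+1}$ in row $i+1$ — equivalently $x_i T_{i+1}-x_{i+1}T_i$ written as $[x_1,\ldots,x_n]$ times that column — which reproduces the leftmost block of \cref{jacobiandual}. For the last $r$ columns, the syzygy is $L_i g_n - x_n g_{n+i}=0$; substituting $L_i=\sum_{j=1}^n u_{ij}x_j$ and collecting coefficients of each $x_j$ gives, in row $j<n$, the entry $u_{ij}T_{n+i}$, and in row $n$ the entry $u_{in}T_{n+i}-T_n$ (the $-T_n$ coming from the $x_n g_{n+i}$ term). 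That is exactly the right-hand block of the displayed matrix. Throughout, uniqueness of $B(M)$ for linearly-presented ideals (recorded in \cref{sectionbackground}) guarantees that once $M$ is fixed, the matrix we have written down *is* the Jacobian dual, so no further verification of the equation $[x]\cdot B(M)=[T]\cdot M$ beyond this term-matching is needed.

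The main obstacle I anticipate is not the computation of $B(M)$ — that is essentially linear algebra — but rather verifying that the $t-1$ chosen syzygies genuinely form a \emph{minimal} generating set of the first syzygy module of $I_{2,\mathcal{F}}$, so that the matrix $M$ assembled from them really is a Hilbert–Burch matrix. One clean way is a rank/count argument: the module of linear syzygies of an equigenerated perfect height-two ideal with linear resolution has rank $t-1$ and is generated in a single degree, and the $t-1$ columns we wrote are $K$-linearly independent (their leading behavior on the variables $x_1,\ldots,x_n$ exhibits a triangular pattern in the first block and the $L_i$'s contribute independent new directions via the $u_{ij}$), hence they form a minimal generating set. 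Alternatively one invokes that $I_{2,\mathcal{F}}$ is the ideal of $2\times 2$ minors / a specialization of a generic construction whose Hilbert–Burch matrix is classically known, and checks our $M$ agrees with it up to invertible row/column operations. Either route completes the proof once combined with the column-by-column identification above.
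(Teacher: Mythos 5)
Your overall strategy is the same as the paper's: build the Hilbert--Burch matrix $M$ out of the reduced Koszul syzygies among the generators $g_k=\prod_{j\neq k}F_j$ and read off $B(M)$ from $[x_1,\ldots,x_n]\cdot B(M)=[T_1,\ldots,T_t]\cdot M$. The only structural difference is that the paper takes the ``chain'' of syzygies $F_ig_i-F_{i+1}g_{i+1}$ for $i=1,\ldots,t-1$ and then performs telescoping column operations on $B(M)$ at the end, whereas you absorb those column operations into the choice of syzygy basis from the start (pairing each $g_{n+i}$ with $g_n$ rather than with $g_{n+i-1}$). That is a legitimate and arguably cleaner variant. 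Your worry about minimality of the chosen $t-1$ syzygies is also handled reasonably: since $\mu(I_{2,\mathcal{F}})=t$ and the ideal is perfect of height two with a linear resolution, Hilbert--Burch forces the presentation to be $t\times(t-1)$ with linear entries, and your $t-1$ linear syzygies are visibly $K$-linearly independent, so they must be a full column basis. The paper simply asserts the Hilbert--Burch matrix without this discussion.

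There is, however, a systematic index error that makes several of your displayed identities false as written. Since $g_k=\prod_{j\neq k}F_j$, one has $F_kg_k=\prod_{j=1}^tF_j$ for \emph{every} $k$, so the reduced Koszul syzygy between $g_k$ and $g_\ell$ is $F_kg_k-F_\ell g_\ell=0$, not $F_\ell g_k-F_kg_\ell=0$ (the latter equals $\bigl(F_\ell^2-F_k^2\bigr)\prod_jF_j/(F_kF_\ell)$, which is nonzero in general). Consequently your relations ``$x_{i+1}g_i-x_ig_{i+1}=0$'' and ``$L_ig_n-x_ng_{n+i}=0$'' are not syzygies; the correct ones are $x_ig_i-x_{i+1}g_{i+1}=0$ and $L_ig_{n+i}-x_ng_n=0$. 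Tellingly, the matrix entries you then write down (the $T_i$/$-T_{i+1}$ pattern in the left block, and $u_{ij}T_{n+i}$ in row $j<n$ with $u_{in}T_{n+i}-T_n$ in row $n$ in the right block) are exactly the ones produced by the \emph{correct} syzygies, not by the ones you stated — e.g.\ $x_{i+1}T_i-x_iT_{i+1}$ would put $-T_{i+1}$ in row $i$, and $L_iT_n-x_nT_{n+i}$ would put $u_{ij}T_n$ in row $j$. So the final matrix is right and the argument goes through once the syzygies are corrected, but as written the proof contains false intermediate identities that must be fixed.
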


\begin{proof}
Since $I_{2,\mathcal{F}}$ is perfect of height two, by the Hilbert-Burch Theorem a presentation matrix of $I_{2,\mathcal{F}}$ is 
\begin{equation}
\label{presentationmatrix} \hypertarget{presentationmatrix}{}
   \footnotesize M= \bmatrix 
F_1 & 0 & \ldots & 0   \\ 
-F_2 & F_2 & \ldots & \vdots   \\  
0 & -F_3 & \ldots & \vdots  \\  
\vdots & 0 & \ldots & 0  \\  
\vdots & \vdots & \ldots &  F_{t-1}  \\  
\vdots & \vdots & \ldots &  -F_{t}  \\  
 \endbmatrix. 
\end{equation}   
Recall that $\mathcal{F}= \lbrace F_1, \ldots, F_t \rbrace = \lbrace x_1, \ldots, x_n, L_1, \ldots, L_r \rbrace$ where $L_i=\sum_{j=1}^n u_{ij}x_j$.
 Therefore, the Jacobian dual $B(M)$ is equal to $$ \footnotesize \bmatrix 
T_1 & 0 & \ldots & 0 &  - u_{11}T_{n+1} &  (u_{11}T_{n+1} - u_{21}T_{n+2}) & \ldots &  (u_{r-1,1}T_{n+r-1} - u_{r1}T_{n+r}) \\ 
-T_2 & T_2 & \ldots & 0 &  - u_{12}T_{n+1} &  (u_{12}T_{n+1} - u_{22}T_{n+2}) & \ldots &  (u_{r-1,2}T_{n+r-1} - u_{r2}T_{n+r})\\  
0 & -T_3 & \ldots & \vdots & \vdots & \vdots & \ldots & \vdots  \\  
\vdots & 0 & \ldots & \vdots & \vdots & \vdots & \ldots & \vdots \\  
\vdots & \vdots & \ldots &  0 & \vdots & \vdots & \ldots & \vdots \\  
0 & 0 & \ldots &  T_{n-1} & - u_{1,n-1}T_{n+1} & (u_{1,n-1}T_{n+1} - u_{2,n-1}T_{n+2}) & \ldots &  (u_{r-1,n-1}T_{n+r-1} - u_{r,n-1}T_{n+r}) \\  
0 & 0 & \ldots &  -T_n & (T_n- u_{1n}T_{n+1}) & (u_{1n}T_{n+1} - u_{2n}T_{n+2}) & \ldots &  (u_{r-1,n}T_{n+r-1} - u_{rn}T_{n+r})\\  \endbmatrix.  $$ \normalsize
 By column operations this matrix can be reduced to the form given in \cref{jacobiandual}.
\end{proof}


 
 





Notice that the Jacobian dual of $I_{2,\mathcal{F}}$ is of size $n \times (t-1)$. To compute the ideal of maximal minors of the Jacobian dual we introduce the following notations. 

\begin{definition}
\label{minorsU} \hypertarget{minorsU}{}
Assume $r = t-n \geq 1$. As in \cref{linearsetting}, for $i=1, \ldots ,r$ consider the linear forms $L_i \coloneq \sum_{k=1}^n u_{ik}x_k$ with $u_{ik} \in K$ and let $U$ be the $n \times r$ matrix on the elements $u_{ik}$.

Consider a set of indexes $\chi \subseteq \lbrace 1, \ldots, t \rbrace$ such that $|\chi|= r$. Write 
$$ \chi = \lbrace i_1, \ldots, i_h, j_1, \ldots, j_{r-h} \rbrace$$ with $i_1, \ldots, i_h \leq n$ and $j_1, \ldots, j_{r-h} \geq n+1$. Denote by $U_{\chi}$ the $h \times h$ minor of $U$ defined by taking the rows $i_1, \ldots, i_h$ and removing the columns $j_1-n, \ldots, j_{r-h}-n$. 
In the case $h=0$, we set $U_{\chi}\coloneq1$. 
\end{definition}


\begin{definition}
\label{minorsB} \hypertarget{minorsB}{}
Consider the polynomial ring $K[T_1, \ldots, T_t]$. Suppose $r=t-n \geq 1$ and define the following polynomials.
Given $\Theta \subseteq \lbrace 1, \ldots, t \rbrace$ such that $|\Theta|= r-1$,  set
$ \lbrace 1, \ldots, t \rbrace \setminus \Theta = \lbrace k_1, \ldots, k_{n+1} \rbrace,$ where $k_i < k_{i+1}$ for every $i= 1, \ldots, n$. Define
$$ m_{\Theta}:= \sum_{i=1}^{n+1} (-1)^{\alpha(\Theta,k_i)} \left( \dfrac{T_{k_1} \cdots T_{k_{n+1}}}{T_{k_i}} \right) U_{\Theta \cup \lbrace k_i \rbrace} \in K[T_1, \ldots, T_t],  $$ where $U_{\Theta \cup \lbrace k_i \rbrace}$ is defined as in \cref{minorsU}. 
The exponent $\alpha(\Theta,k_i)$ is obtained as follows. 
Let $h < n+1$ be the integer such that $k_h \leq n$ and $k_{h+1} > n$. Then 
$$ \alpha(\Theta,k_i) =  \left\{ \begin{array}{cc}
   n-h +i - k_i  & \mbox{ if } i \leq h \\
  n+i   & \mbox{ if } i > h.
\end{array} \right. $$
\end{definition}

Notice that none of the monomials of $m_{\Theta}$ is divisible by any variable $T_j$ for $j \in \Theta$ and that $m_{\Theta} \in (T_{k_i}, T_{k_l}) $ for any $k_i,k_l $.

We now state our main theorem about the ideal of maximal minors of the Jacobian dual matrix of $I_{2,c}(\mathcal{F})$.

\begin{thm}
\label{minorsJacDual} \hypertarget{minorsJacDual}{}
With the assumptions of \cref{linearsetting}, let $c=2$, and let $B$ be the Jacobian dual matrix for $I_{2,\mathcal{F}}$ described by \cref{jacobiandual}. Moreover, assume that $r = t-n \geq 1$.  The ideal $I_n(B)$ of the maximal minors of $B$ is minimally generated by all the polynomials $m_{\Theta}$ defined in \cref{minorsB} such that $\,n \not \in \Theta$. \end{thm}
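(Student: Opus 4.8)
### Proof strategy for Theorem \ref{minorsJacDual}

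The plan is to compute the maximal minors of the $n \times (t-1)$ matrix $B = B(M)$ in \cref{jacobiandual} directly by Laplace expansion, exploiting its very special shape: the first $n-1$ columns form a bidiagonal block (with $T_i$ on the diagonal and $-T_{i+1}$ below it — essentially the Hilbert--Burch pattern of the monomial star configuration on $x_1, \ldots, x_n$), the last $r$ columns are "$U$-columns'' of the form $(u_{i1}T_{n+i}, \ldots, u_{i,n-1}T_{n+i}, u_{in}T_{n+i} - T_n)^{t}$, and the last row is the only place where the variable $T_n$ interacts with the $U$-columns. A maximal minor of $B$ is obtained by choosing $n$ of the $t-1$ columns; equivalently, by choosing which $r-1$ columns to \emph{omit}. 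I would index minors by the set $\Theta$ of omitted column-indices, translated into the index set $\{1, \ldots, t\}$ via the correspondence column $i \leftrightarrow T_i$ for $i \le n-1$ and $U$-column $i \leftrightarrow T_{n+i}$; since the last ($n$-th) column of the bidiagonal block corresponds to $T_n$ and is never omissible-as-a-column in the naive sense, the restriction $n \notin \Theta$ appears naturally.

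First I would fix $\Theta$ with $|\Theta| = r-1$ and $n \notin \Theta$, and show that the corresponding maximal minor of $B$ equals $m_\Theta$ up to sign. The computation splits according to how many bidiagonal columns are retained. Expanding the determinant along the $U$-columns that are kept, each term picks out one row from each retained $U$-column; because of the bidiagonal structure, once a set of rows is assigned to the $U$-columns the remaining minor of the bidiagonal block is (up to sign) a \emph{monomial} in $T_1, \ldots, T_n$, namely the product of the $T_i$'s over the rows not used by $U$-columns. Collecting terms, one sees that the variables $T_j$ with $j \in \Theta$ never appear (a kept bidiagonal column $i$ always contributes exactly $T_i$ or $T_{i+1}$, and an omitted one is absent), while the coefficient attached to the monomial $T_{k_1}\cdots T_{k_{n+1}}/T_{k_i}$ is exactly a maximal minor $U_{\Theta \cup \{k_i\}}$ of $U$ — this is where \cref{minorsU} is designed to match. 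The sign bookkeeping is the delicate part: one must check that Laplace cofactor signs, together with the $-1$'s on the subdiagonal of the bidiagonal block and the extra $-T_n$ in the last row of each $U$-column, combine to produce precisely the exponent $\alpha(\Theta, k_i)$ of \cref{minorsB}. The cleanest way to do this is probably to normalize by column operations (as already hinted in the proof of \cref{jacobheight2}) and then induct on $n-h$, tracking how inserting one more retained bidiagonal column shifts the sign.

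Next I would argue minimality, i.e. that the set $\{ m_\Theta : |\Theta| = r-1,\ n \notin \Theta \}$ is a \emph{minimal} generating set of $I_n(B)$. Since $I_n(B)$ is homogeneous in the $T$-variables and every $m_\Theta$ is homogeneous of the same degree $n$, it suffices to show the $m_\Theta$ are $K$-linearly independent modulo $\mathfrak{m}\, I_n(B)$ — but as they all have degree $n$, linear independence modulo $\mathfrak m I_n(B)$ is just $K$-linear independence of the $m_\Theta$ themselves in the degree-$n$ piece of $K[T_1,\ldots,T_t]$. Here the remark after \cref{minorsB} is the key: no monomial of $m_\Theta$ is divisible by any $T_j$ with $j \in \Theta$. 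Given two distinct admissible sets $\Theta \ne \Theta'$, pick $j \in \Theta' \setminus \Theta$; then $m_\Theta$ has a monomial divisible by $T_j$ (take $k_i = j$, noting $U_{\Theta \cup \{j\}} \ne 0$ because the relevant submatrix of $U$ can be assumed generic, or handle the degenerate case separately) which $m_{\Theta'}$ cannot have. A counting/triangularity argument along these lines shows no nontrivial $K$-linear combination of the $m_\Theta$ can vanish, and also that no $m_\Theta$ lies in the ideal generated by the others (a syzygy among equal-degree generators is a linear syzygy, ruled out by the same support argument). One must also verify that these minors actually generate \emph{all} of $I_n(B)$ and that no further minors arise — i.e., that a maximal minor of $B$ using the full bidiagonal block forces, in index terms, $n \in \Theta$, and such a choice either vanishes or is redundant; this follows because omitting all $r-1$ "extra'' slots from among the $U$-columns while keeping all $n$ bidiagonal columns is only possible when $r - 1 \le 0$, and the boundary cases are checked by hand.

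The main obstacle I anticipate is the sign computation — pinning down $\alpha(\Theta, k_i)$ exactly. The combinatorics of Laplace expansion signs interacting with the subdiagonal $-1$'s and the $-T_n$ perturbations in the last row is genuinely fiddly, and the piecewise formula ($n-h+i-k_i$ for $i \le h$, $n+i$ for $i > h$) strongly suggests the authors tracked it through an induction on the position of the "split'' between small and large indices in $\{1,\ldots,t\}\setminus\Theta$. I would isolate this into a standalone lemma computing a single generic maximal minor of $B$ in closed form, prove it by induction on $n$ (peeling off the first row/column of the bidiagonal block), and then feed the result into the minimality argument, which by contrast is routine once the support property is in hand.
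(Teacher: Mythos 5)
There is a genuine gap at the heart of your plan: the claim that the maximal minor of $B$ obtained by omitting the columns indexed by $\Theta$ equals $m_{\Theta}$ up to sign is false in general. Omitting the bidiagonal column $A_j$ does not remove the variable $T_j$ from the determinant, because $T_j$ also occurs as the entry $-T_j$ in row $j$ of the \emph{previous} column $A_{j-1}$; for instance, with $r=2$ the minor obtained by deleting $A_2$ visibly contains monomials divisible by $T_2$ (choose the row-$2$ entry $-T_2$ of $A_1$ in the expansion), whereas $m_{\{2\}}$ by construction does not. Your own parenthetical ``a kept bidiagonal column $i$ always contributes exactly $T_i$ or $T_{i+1}$'' already contains the problem: if $A_{i-1}$ is kept and contributes its row-$i$ entry, then $T_i$ survives even though $i\in\Theta$. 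Consequently the support property you invoke for minimality holds for the $m_{\Theta}$ as defined in \cref{minorsB} but not for the actual maximal minors of $B$, and both halves of your argument (that the minors are the $m_{\Theta}$, and that the $m_{\Theta}$ are linearly independent generators of $I_n(B)$) collapse. The only minors that literally equal some $m_{\Theta}$ are the one omitting the initial segment $A_1,\dots,A_{r-1}$ and, after the reduction to smaller $r$, those omitting only $U$-columns.

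The paper's proof is built precisely around this obstruction. For $\Theta\subseteq\{1,\dots,n-1\}$ the polynomial $m_{\Theta}$ is realized as the determinant of a matrix obtained from $B$ by \emph{column operations}: each omitted run of consecutive columns is added to the column immediately to its left (\cref{moreminors}), and it is this operation, not the omission, that kills the variables $T_j$ with $j\in\Theta$. The recursion $p_{\Theta}^{(h)}=p_{\Theta}^{(h+1)}-p_{\Theta(i,j)}^{(h-j+1)}$ of \cref{lemmaminors} then expresses each genuine minor $p_{\Psi}=p_{\Theta}^{(1,1)}$ as an alternating sum of polynomials $m_{\Theta'}$ with $\Theta'$ obtained by decreasing indices, terminating at $m_{\Theta^{\star}}$ with $\Theta^{\star}=\{1,\dots,r-1\}$; conversely each $m_{\Theta}$ lies in $I_n(B)$ because it is a maximal minor of a column-equivalent matrix. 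The outer structure is an induction on $r$ (minors omitting a $U$-column are handled by the inductive hypothesis for $\mathcal{F}\setminus\{L_j\}$), not a single closed-form Laplace expansion, and the sign $\alpha(\Theta,k_i)$ is computed only on the reduced matrix, which after a row permutation coincides with the matrix $B^{\star}$ of the initial-segment case. To salvage your approach you would have to first perform these column operations, or equivalently establish the unitriangular change of basis between $\{p_{\Psi}\}$ and $\{m_{\Theta}\}$ --- which is exactly the content you are missing.
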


Before proving the theorem, we need a technical definition and a lemma that give us control on the minors of the submatrices of $B$ containing the last $r$ columns.

\begin{definition}
 \label{moreminors} \hypertarget{moreminors}{} 
 Adopt the same notation of \cref{minorsB}. Let $B$ be the Jacobian dual matrix for $I_{2,\mathcal{F}}$ described by \cref{jacobiandual} and denote its columns by $A_1, \ldots, A_{n-1}, C_1, \ldots, C_r$. 
 Consider a set of indices $\Theta \subseteq \lbrace 1, \ldots, n-1 \rbrace$ such that $|\Theta|=r-1.$ \\
 If $1 \not \in \Theta$ write $ \Theta = \Theta_1 \cup \ldots \cup \Theta_s \,$ such that for every $\,i=1, \ldots, s$:
  \begin{itemize}
    \item $ \Theta_i = \lbrace k_i, k_i+1, \ldots, k_i+l_i-1 \rbrace $ contains $l_i$ consecutive indexes.
    \item $k_i + l_i -1 < k_{i+1} -1$.
  \end{itemize}
 In particular, for every $i$, $k_i-1 \not \in \Theta$.
 If $\,1 \in \Theta$, write in the same way $\, \Theta = \Theta_0 \cup \Theta_1 \cup \ldots \cup \Theta_s $ with $1 \in \Theta_0$.  

 For $s=0$ (i.e. $\Theta = \lbrace 1, \ldots, r-1 \rbrace$), let $p_{\Theta}^{(1,1)}$ denote the maximal minor of the submatrix of $B$ computed by only removing the first $r-1$ columns.
 If $s \geq 1$, for $i=1, \ldots, s$ and $j= 1, \ldots, l_i$ denote by $p_{\Theta}^{(i,j)}$ the maximal minor of the matrix obtained from $B$ by removing all the columns $A_k$ for $k \in \Theta$ and by doing the following column operations: 
  \begin{itemize}
    \item for $ 1 \leq h < i$ replace the column $A_{k_h-1}$ with the sum of consecutive columns $ A_{k_h-1}+ A_{k_h} + \ldots + A_{k_h+l_h-1}   $.
    \item replace the column $A_{k_i-1}$ with the sum of consecutive columns $ A_{k_i-1}+ A_{k_i} + \ldots + A_{k_i+j-2}   $.
  \end{itemize}
 Only for $i=s$, we define in an analogous way another minor $p_{\Theta}^{(s,l_s+1)}$. 
\end{definition}

In the proof of \cref{minorsJacDual} we show that the minor $p_{\Theta}^{(s,l_s+1)}$ coincides with the polynomial $m_{\Theta}$, and that the ideal generated by all the polynomials $m_{\Theta}$ coincides with the ideal generated by all the minors $p_{\Theta}^{(1,1)}$. A key fact is that the minors $p_{\Theta}^{(i,j)}$ define a sequence, where every element is obtained from the previous one by replacing one column of the corresponding submatrix with its sum with a column of $B$ excluded from such submatrix.
 We rename this sequence of minors as
 $$  p_{\Theta}^{(1)},p_{\Theta}^{(2)}, \ldots, p_{\Theta}^{(e)} := p_{\Theta}^{(1,1)}, \ldots, p_{\Theta}^{(1,l_1)}, p_{\Theta}^{(2,1)}, \ldots, p_{\Theta}^{(2, l_2)}, \ldots, p_{\Theta}^{(s,l_s)}, p_{\Theta}^{(s,1)}, \ldots, p_{\Theta}^{(s,l_s+1)}. $$
 The following lemma provides a formula relating different elements of this sequence of minors, which allows us to prove \cref{minorsJacDual} by induction. To help the reader dealing with the technicality of our argument, in \cref{example1} we show how to apply the lemma for small values of $r$.

\begin{lem}
\label{lemmaminors} \hypertarget{lemmaminors}{}
Adopt the same notations as in \cref{minorsB} and \cref{moreminors}. For $i=1, \ldots, s$ and $j= 1, \ldots, l_i$, set $\,\Theta(i,j) \coloneq \Theta \setminus \{ k_i+j-1 \} \cup \{ k_i - 1 \}$.
For $h=1, \ldots, e-1$ we have 
 \begin{equation}
  \label{minorsformula} \hypertarget{minorsformula}{}
   p_{\Theta}^{(h)} \coloneq p_{\Theta}^{(i,j)} =  p_{\Theta}^{(h+1)} - p_{\Theta(i,j)}^{(h-j+1)}. 
  \end{equation}
\end{lem}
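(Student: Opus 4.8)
The plan is to compute both sides of \eqref{minorsformula} by cofactor expansion along the rows, tracking carefully how the column operations in \cref{moreminors} act on the Jacobian dual $B$ in \eqref{jacobiandual}. First I would recall the essential structure: in $B$ the columns $A_1,\dots,A_{n-1}$ are ``bidiagonal'' (the column $A_k$ has $T_k$ in row $k$ and $-T_{k+1}$ in row $k+1$ for $k<n-1$, with a sign adjustment in the last row), while the columns $C_1,\dots,C_r$ carry the entries coming from the coefficient matrix $U$ together with the extra $-T_n$ term in the last row. The key point is that replacing a column $A_{k-1}$ by a telescoping sum $A_{k-1}+A_k+\cdots+A_{k+m-1}$ collapses the bidiagonal pattern: the intermediate $\pm T$'s cancel and one is left with a column having $T_{k-1}$ in row $k-1$ and $-T_{k+m}$ in row $k+m$ (again with the last-row correction when the block reaches the bottom). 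Thus each minor $p_\Theta^{(i,j)}$ is, up to sign, a determinant of a matrix whose ``left'' columns are still of this collapsed bidiagonal shape, and this is exactly what lets one expand it combinatorially.

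Next I would set up the induction implicit in the statement. Going from $p_\Theta^{(h)}=p_\Theta^{(i,j)}$ to $p_\Theta^{(h+1)}$ corresponds to adjoining one more column $A_{k_i+j-1}$ to the current telescoping sum in position $k_i-1$. By column-linearity of the determinant, $p_\Theta^{(h+1)}$ is the sum of $p_\Theta^{(h)}$ (the old telescoping sum) and the determinant of the matrix in which the column in position $k_i-1$ has been replaced by $A_{k_i+j-1}$ alone. The claim is precisely that this second determinant equals $-p_{\Theta(i,j)}^{(h-j+1)}$, where $\Theta(i,j)=\Theta\setminus\{k_i+j-1\}\cup\{k_i-1\}$. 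To see this I would observe that inserting the single bidiagonal column $A_{k_i+j-1}$ (with entries $T_{k_i+j-1}$ and $-T_{k_i+j}$) in place of the collapsed block in position $k_i-1$ produces, after a suitable row/column expansion, exactly the submatrix whose columns are indexed by $\Theta(i,j)$, with the telescoping operations up to stage $h-j+1$ — the index shift $h\mapsto h-j+1$ reflecting that in $\Theta(i,j)$ the block $\Theta_i$ has been truncated at its $(j-1)$st element and the ``new'' index $k_i-1$ starts a fresh consecutive run. The overall sign $-1$ I would extract by bookkeeping the permutation needed to move the column $A_{k_i+j-1}$ from its natural position to position $k_i-1$ and comparing with the sign convention $\alpha(\Theta,k_i)$ of \cref{minorsB}; this is where the parity of $k_i+j-1-(k_i-1)=j$ enters and must be reconciled against the exponents $\alpha$.

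The main obstacle I expect is precisely this sign and index-shift bookkeeping: verifying that the Laplace/cofactor expansion of the ``single-column'' determinant reorganizes into $p_{\Theta(i,j)}^{(h-j+1)}$ with the correct sign requires matching three separate sign sources — the intrinsic signs in the last row of $B$ (the $-T_n$ terms), the signs $(-1)^{\alpha(\Theta,k_i)}$ built into the definition of $m_\Theta$, and the transposition signs from rearranging columns — and checking that the definition of $\Theta(i,j)$ keeps the blocks in the normal form required by \cref{moreminors} (consecutive runs separated by gaps, with $k-1\notin\Theta$ for each block start). I would handle this by treating the generic case where the block $\Theta_i$ does not touch the last row separately from the boundary case $i=s$, $j=l_s+1$ (where the telescoping sum reaches column $A_{n-1}$ and the $-T_n$ correction is active), since in the boundary case the collapsed column acquires the last-row term and the resulting minor is $m_\Theta$ itself rather than a smaller $p$. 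A sanity check on small $r$ — as promised in \cref{example1} — would confirm the signs before committing to the general argument.
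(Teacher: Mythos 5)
Your approach is essentially the paper's: both arguments expand the determinant multilinearly in the single modified column and identify the correction term as the minor attached to $\Theta(i,j)$ at an earlier stage of its sequence. Two small points. First, your stated intermediate claim has the wrong sign: if $p_{\Theta}^{(h+1)} = p_{\Theta}^{(h)} + D$, where $D$ is the determinant of the matrix with $A_{k_i+j-1}$ alone in position $k_i-1$, then \eqref{minorsformula} forces $D = +p_{\Theta(i,j)}^{(h-j+1)}$, not $-p_{\Theta(i,j)}^{(h-j+1)}$; this is exactly what the paper verifies by writing $D$ as the determinant of the matrix whose distinguished column has nonzero entries $T_{k_i-1+j}$ and $-T_{k_i+j}$. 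Second, the column permutation you worry about is not actually needed: the column $A_{k_i+j-1}$ already occupies the correct position, and the identification with $p_{\Theta(i,j)}^{(h-j+1)}$ rests on the observation that an index smaller than $k_i-1$ lies in $\Theta$ if and only if it lies in $\Theta(i,j)$, so the first $h-j$ column operations prescribed by \cref{moreminors} for the two sets produce the same matrix. With these adjustments your plan fills in to the paper's proof.
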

 
\begin{proof}
Recalling that the matrix $B$ can be expressed as in \cref{jacobiandual}, up to permuting columns, by definition $p_{\Theta}^{(i,j)}$ is the determinant of a matrix of the form 
  $$ \footnotesize  \bmatrix 
     0    &  &\\
     \vdots     &  &\\
     T_{k_i -1} &   & \\
     0    &  &\\
     \vdots     &  B' \\
     -T_{k_i-1+j} & & \\
     0     &  &\\
     \vdots  & &\\
     0    &  &
   \endbmatrix.  $$
 Thus we can write $ p_{\Theta}^{(h)}= p_{\Theta}^{(i,j)} = (-1)^{k_i} T_{k_i -1} M_1 - (-1)^{k_i+j} T_{k_i -1+j} M_2 $, where $M_1$ and $M_2$ are minors of the matrix $B'$. Similarly, by definition $ p_{\Theta}^{(h+1)} $ is the determinant of the same matrix, where the first column is replaced by its sum with the column of $B$ containing the two variables $T_{k_i-1+j}, -T_{k_i+j}$.
 Hence it has the form
    $ p_{\Theta}^{(h+1)}=  (-1)^{k_i} T_{k_i -1} M_1 - (-1)^{k_i+j+1} T_{k_i +j} M_3 $ for some minor $M_3$ of $B'$. It follows that 
  $$  p_{\Theta}^{(h)}=  p_{\Theta}^{(h+1)} - (-1)^{k_i+j} (T_{k_i +j} M_3 + T_{k_i -1+j} M_2). $$
 We have to show that $ p_{\Theta(i,j)}^{(h-j+1)} = (-1)^{k_i+j} (T_{k_i -1+j} M_2 + T_{k_i +j} M_3) $. The second term of the equality is clearly the determinant of the matrix 
  $$ \footnotesize  \bmatrix 
     0    &  &\\
     \vdots     &  &\\
     T_{k_i -1+j} &   & \\
     -T_{k_i+j} & B' \\
     0     &  &\\
     \vdots  & &\\
      0    &  &
  \endbmatrix.  $$
 The first column of this matrix is equal to the column $A_{k_i -1+j}$ of $B$. The remaining columns are obtained by performing the operations described by \cref{moreminors} on the set $\Theta$ to obtain the minor $p_{\Theta}^{(i,j)}$. Now, notice that all the indexes smaller than $k_i -1$ are in $\Theta$  if and only if they are in the set $\Theta(i,j)=\Theta \setminus \{ k_i+j-1 \} \cup \{ k_i - 1 \}$. Then, by performing $h-j$ of the operations described in \cref{moreminors} on the set $ \Theta(i,j)$ we obtain the same matrix as above. Hence, the determinant of the matrix above coincides with $ p_{\Theta(i,j)}^{(h-j+1)}$.
  \end{proof}
  
 \begin{example}
  \label{example1} \hypertarget{example1}{}
  Adopt the same notation as in \cref{moreminors} and \cref{lemmaminors}.
  \begin{itemize}
    \item In the case when $r=1$, $B$ has only one maximal minor. In the proof of \cref{minorsJacDual}, this minor will be shown to be equal to $m_{\Theta}$ with $\Theta = \emptyset$.
      \item In the case when $r=2$, following the notation of \cref{moreminors} and \cref{lemmaminors}, we deal with sets $\Theta= \lbrace i \rbrace$ with $1 \leq i \leq n-1$. If $i=1$, we have $s=0$ and $p_{\Theta}^{(1,1)}= m_{\Theta}$.
      For $i \geq 2$, $s=1$ and the sequence corresponding to $\Theta$ is $ p_{\Theta}^{(1,1)}, p_{\Theta}^{(1,2)},  $ where $p_{\Theta}^{(1,1)}$ is the minor of $B$ obtained removing the column $A_i$ and $p_{\Theta}^{(1,2)}$ is the minor obtained by removing the column $A_i$ and replacing $A_{i-1}$ by $A_{i-1} + A_i$. This second minor is equal to $m_{\Theta}$. \cref{lemmaminors} gives
        $$ p_{\Theta}^{(1,1)}= p_{\Theta}^{(1,2)} - p_{\Theta'}^{(1,1)},  $$ 
      with $\Theta'= \lbrace i-1 \rbrace$. Inductively this shows that $p_{\Theta}^{(1,1)}$ is in the ideal generated by the minors of the form $m_{\lbrace j \rbrace}$ for $j \leq i$.
   \item Consider also the case when $r=3$. Here the sequences correspond to sets $\Theta= \lbrace i,j \rbrace$ with $1 \leq i < j \leq n-1$. Again we have $p_{\lbrace 1,2  \rbrace} = m_{\lbrace 1,2  \rbrace}$. Then we have to describe three possible cases: $ \lbrace 1,j \rbrace $ with $j \geq 3$, $ \lbrace i,i+1 \rbrace $, and $ \lbrace i,j \rbrace $ with $i-j \geq 2 $.
   In the first case $s=1, l_1=1$, and similarly to the case $r=2$ we get 
     $$ p_{\lbrace 1,j \rbrace}^{(1,1)}= p_{\lbrace 1,j \rbrace}^{(1,2)} - p_{\lbrace 1,j-1 \rbrace}^{(1,1)}, $$ and $p_{\lbrace 1,j \rbrace}^{(1,2)}= m_{\lbrace 1,j \rbrace}$.
   For $\Theta= \lbrace i,i+1 \rbrace$ we find $s=1, l_1=2$. The minor $p_{\Theta}^{(1,1)}$ is obtained by removing columns $A_i, A_{i+1}$, $p_{\Theta}^{(1,2)}$ is obtained by also replacing the column $A_{i-1}$ by $A_{i-1} + A_i$, and $p_{\Theta}^{(1,3)}= m_{\lbrace i,i+1 \rbrace}$ is obtained replacing the column $A_{i-1}$ by $A_{i-1} + A_i+A_{i+1}$. \cref{lemmaminors} gives 
     $$ p_{\lbrace i,i+1 \rbrace}^{(1,1)}= p_{\lbrace i,i+1 \rbrace}^{(1,2)} - p_{\lbrace i-1,i+1 \rbrace}^{(1,1)}= (p_{\lbrace i,i+1 \rbrace}^{(1,3)} - p_{\lbrace i-1,i \rbrace}^{(1,1)}) - p_{\lbrace i-1,i+1 \rbrace}^{(1,1)}. $$
   In the case $\Theta= \lbrace i,j \rbrace$ with $i > 1$, $i-j \geq 2$, we have $s=2,l_1=1,l_2=1$. Here we have $p_{\Theta}^{(1,1)}$, $p_{\Theta}^{(2,1)}$, $p_{\Theta}^{(2,2)} = m_{\Theta}$ that are obtained subsequently by first removing columns $A_i,A_j$, then replacing $A_{i-1}$ by $A_{i-1} + A_i$, and finally replacing also $A_{j-1}$ by $A_{j-1} + A_j$. By \cref{lemmaminors} $$ p_{\lbrace i,j \rbrace}^{(1,1)}= p_{\lbrace i,j \rbrace}^{(2,1)} - p_{\lbrace i-1,j \rbrace}^{(1,1)}= (p_{\lbrace i,j \rbrace}^{(2,2)} - p_{\lbrace i,j-1 \rbrace}^{(2)}) - p_{\lbrace i-1,j \rbrace}^{(1,1)}, $$ where the notation $(2)$ stands for $(1,2)$ if $j-1= i + 1$ and for $(2,1)$ if $j-1 > i+1$.  Also in this case it follows that each $p_{\Theta}^{(1,1)}$ is in the ideal generated by the polynomials $m_{\Theta}$. Indeed one can combine all the previous formulas and use inductively the fact that the second term of each new equality corresponds to a set $\Theta' $ containing smaller indexes.
 \end{itemize}
\end{example}
  
 We are now ready to prove \cref{minorsJacDual}.


\begin{proof}(of \cref{minorsJacDual}).
 As in \cref{moreminors} denote the columns of $B$ by $A_1, \ldots, A_{n-1}, C_1, \ldots$, $C_r$.
 Given $\Psi \subseteq \lbrace A_1, \ldots, A_{n-1}, C_1, \ldots, C_r \rbrace$ such that $|\Psi|= r-1$, denote by $p_{\Psi}$ the $n \times n$ minor of $B$ obtained by removing all the columns contained in $\Psi$.
 Observe that we need to prove that the ideal $$I_n(B)=(p_{\Psi} \mbox{ : } \Psi \subseteq \lbrace A_1, \ldots, A_{n-1}, C_1, \ldots, C_r \rbrace, \mbox{  } |\Psi|= r-1 )$$ is equal to the ideal $$ (m_{\Theta} \mbox{ : } \Theta \subseteq \lbrace 1, \ldots, n-1, n+1, \ldots, t \rbrace, \mbox{  } |\Theta|= r-1 ). $$
 We fix $n$ and work by induction on $r$. If $r=1$, the matrix $B$ described in \cref{jacobiandual} reduces to the form
    $$ \footnotesize  \bmatrix 
      T_1 & 0 & 0 & \ldots & 0 &   u_{11}T_{n+1}  \\ 
     -T_2 & T_2 & 0 & \ldots & 0 &   u_{12}T_{n+1} \\  
      0 & -T_3 & T_3 &\ldots & \vdots & \vdots  \\  
      \vdots & 0 & -T_4 & \ldots & \vdots & \vdots  \\  
      \vdots & \vdots & 0 & \ldots &  0 & \vdots  \\  
      \vdots & \vdots & \ldots & \vdots & T_{n-1} &  u_{1,n-1}T_{n+1}  \\  
      0 & 0 & 0 & \ldots &  -T_n &  (u_{1n}T_{n+1}-T_n) \\  
     \endbmatrix.  $$
 A quick computation by induction on $n$ shows that for every $n$ the determinant of this matrix is equal to
 $$ m_{\Theta}=  -T_1 \cdots T_n+\sum_{i=1}^{n} \left( \dfrac{T_{1} \cdots T_{n}}{T_{i}} \right) u_{1i} $$ which, according to \cref{minorsB}, corresponds to the set $\Theta = \emptyset$.
 
 Hence, we assume that the result is true for $r-1$ and we prove it for some $1 < r < n$. The case $r \geq n$ will be considered later. 
 
 Consider the matrix in \cref{jacobiandual} and take the submatrix $B^{\prime}$ obtained by eliminating one column $C_j$ with $j \in \lbrace 1, \ldots, r \rbrace $. The ideal of maximal minors of $B^{\prime}$ is contained in $I_n(B)$ and its generators are also generators of $I_n(B)$. By the inductive hypothesis we have 
 $$  I_n(B^{\prime})= (m_{\Theta^{\prime}} \mbox{ : } \Theta^{\prime} \subseteq \lbrace 1, \ldots, n-1, n+1, \ldots, t \rbrace \setminus \lbrace n+j \rbrace, \mbox{  } |\Theta^{\prime}|= r-2)    $$ as ideal of the polynomial ring $K[T_1, \ldots, \widehat{T_{n+j}}, \ldots, T_t]$. Following the notation of \cref{minorsB} and working back in the polynomial ring $K[T_1, \ldots, T_t]$ we observe that each of such $m_{\Theta^{\prime}}$ coincides with $m_{\Theta}$ with $\Theta \coloneq \Theta^{\prime} \cup  \lbrace n+j \rbrace$. Since the same argument can be applied to any $j \in \lbrace 1, \ldots, r \rbrace $, we reduce to considering only the minors of $B$ for submatrices containing all the last $r$ columns $C_1, \ldots, C_r$. In particular we have to show that 
 $$ (p_{\Psi} \mbox{ : } \Psi \subseteq \lbrace A_1, \ldots, A_{n-1} \rbrace, \mbox{  } |\Psi|= r-1 ) = (m_{\Theta} \mbox{ : } \Theta \subseteq \lbrace 1, \ldots, n-1 \rbrace, \mbox{  } |\Theta|= r-1 ).  $$
 
 Consider first the submatrix obtained from $B$ by deleting the first $r-1$ columns. This matrix is equal to
   $$ \footnotesize B^{\star}:= \bmatrix 
      0 & 0 & \ldots & 0 &   u_{11}T_{n+1} &   \ldots &  u_{r1}T_{t} \\ 
      \vdots & \vdots & \ldots & \vdots & \vdots &  \ldots & \vdots \\  
      T_{r} & 0 & \ldots & \vdots & \vdots &  \ldots & \vdots  \\ 
     -T_{r+1} & T_{r+1} & \ldots & \vdots & \vdots &  \ldots & \vdots  \\ 
     0 & -T_{r+2} & \ldots & \vdots & \vdots &  \ldots & \vdots  \\    
     \vdots & \vdots & \ldots &  0 & \vdots &  \ldots & \vdots \\  
     0 & 0 & \ldots &  T_{n-1} &  u_{1,n-1}T_{n+1} &  \ldots &  u_{r,n-1}T_{t} \\  
     0 & 0 & \ldots &  -T_n &  (u_{1n}T_{n+1}-T_n) &  \ldots &  (u_{rn}T_{t}-T_n)\\  
   \endbmatrix.  $$
 One can check by induction on $n$ that its determinant is equal to
   $$ p_{\lbrace A_1, \ldots, A_{r-1} \rbrace}= (-1)^{r} \left[ \sum_{i=r}^{t} (-1)^{\alpha_i} \left( \dfrac{T_{r} \cdots T_{t}}{T_{i}} \right) U_{\Theta \cup \lbrace i \rbrace}  \right] = m_{\Theta^{\star}} ,  $$ 
 where $\Theta^{\star}:= \lbrace 1, \ldots, r-1 \rbrace$ and $\alpha_i= \max\lbrace i-n-1,0 \rbrace = \alpha(\Theta^{\star}, i)$ as in \cref{minorsB}.
 
 Consider now an arbitrary set of indices $\Theta \subseteq \lbrace 1, \ldots, n-1 \rbrace$ such that $|\Theta|=r-1,$ and $\Theta \neq \Theta^{\star}$. Using the notation of \cref{moreminors}, we want to show that $m_{\Theta} = p_{\Theta}^{(s, l_s+1)}$ and therefore is in the ideal $I_n(B)$. Write $\lbrace 1, \ldots, t \rbrace \setminus \Theta = \lbrace  k_1, \ldots, k_{n+1} \rbrace$ such that $k_i < k_{i+1}$ for every $i=1,\ldots, n$.
 By construction, $p_{\Theta}^{(s, l_s+1)}$ is the minor of a matrix in which all the variables $T_j$ for $j \in \Theta$ do not appear. In particular,  
 after permuting the rows and replacing the variables $T_r, \ldots, T_{n-1}$ by $T_{k_1}, \ldots, T_{k_{n-r}}$ keeping the same order, this matrix is equal to the matrix $B^{\star}$. This implies that, up to a sign, 
 $$ p_{\Theta}^{(s, l_s+1)}= \sum_{i=1}^{n+1} (-1)^{\beta_i} \left( \dfrac{T_{k_1} \cdots T_{k_{n+1}}}{T_{k_i}} \right) U_{\Theta \cup \lbrace k_i \rbrace}  = m_{\Theta},$$ 
 where each $\beta_i$ is determined by the permutations of the rows performed in the process, and equals $\alpha(\Theta,k_i)$. 
 
 This proves that each $m_{\Theta}$ is in  $I_{n}(B)$ for all sets $\Theta \subseteq \lbrace 1, \ldots, t \rbrace \setminus \lbrace n \rbrace$ with $|\Theta| = r-1$.
 Using now \cref{lemmaminors} iteratively as described in \cref{example1}, it follows that each $p_{\Psi}$ with $\,\Psi \subseteq \lbrace A_1, \ldots, A_{n-1} \rbrace \,$ is in the ideal generated by the minors of the form $m_{\Theta}$. Indeed, as in \cref{moreminors}, $p_{\Psi}= p_{\Theta}^{(1,1)}$ where $\Theta$ is the set of indexes corresponding to the columns in $\Psi$. Now, apply \cref{minorsformula} iteratively, starting from $p_{\Theta}^{(1,1)}$. The index in first term on the right side of \cref{minorsformula} increases at each iteration, until the term becomes a $p_{\Theta}^{(s, l_s+1)} = m_{\Theta}$. The second term on the right-hand side of \cref{minorsformula} is determined by a set of indexes obtained from one of those appearing in the previous iteration by replacing an index with a strictly smaller index. Hence, it eventually coincides with $m_{\Theta^{\star}}$.
 
 To conclude we only have to discuss the case $r \geq n$. Clearly all the columns $C_1, \ldots, C_r$ in the second part of the matrix are all equivalent up to permutation of the variables.
 Hence, similarly as in the previous case, the result on all the minors involving at least one of the first $n-1$ columns can be obtained by reducing to the case $r=n-1$. Finally, we only need to prove the statement for $n \times n$ minors involving only columns of the form $C_1, \ldots, C_r$. By renaming the variables, it is sufficient then to consider the matrix 
 $$ \footnotesize  \bmatrix 
     u_{11}T_{n+1} &   \ldots &  u_{n1}T_{2n} \\ 
     \vdots & \vdots &  \ldots  \\      
     u_{1,n-1}T_{n+1} &  \ldots &  u_{n,n-1}T_{2n} \\  
     (u_{1n}T_{n+1}-T_n) &  \ldots &  (u_{nn}T_{2n}-T_n)\\  
   \endbmatrix.  $$
 Expanding with respect to the last row, the determinant of this matrix is 
 $$  T_{n+1} \cdots T_{2n} \,U_{\Theta \cup \lbrace n \rbrace} + \sum_{i=1}^{n} (-1)^{n+i+1} \left( \dfrac{T_{n} \cdots T_{2n}}{T_{n+i}} \right) U_{\Theta \cup \lbrace n+ i \rbrace}   = m_{\Theta}$$ 
 where $\Theta= \lbrace 1, \ldots, n-1, 2n+1, \ldots, t \rbrace.$
\end{proof}

\begin{remark}
  \label{thetacontainingn} \hypertarget{thetacontainingn}{}
   Observe that also the polynomials $m_{\Theta}$ such that $n \in \Theta$ are in the ideal $I_n(B)$. Indeed any of such $m_{\Theta}$ can be expressed as linear combination with coefficients in $\lbrace 1, -1 \rbrace$ of generators of the form $m_{\Theta \setminus \lbrace n \rbrace \cup \lbrace k \rbrace}$, for $k \not \in \Theta$.
\end{remark}
  
\begin{remark}
  \label{vanishingofmtheta} \hypertarget{vanishingofmtheta}{}
  Similarly as in \cref{minorsU}, let
  $ \Theta = \lbrace i_1, \ldots, i_h, j_1, \ldots, j_{r-1-h} \rbrace$ be a set of indexes such that $i_1, \ldots, i_h \leq n$ and $j_1, \ldots, j_{r-1 -h} \geq n+1$. Call $M$ the submatrix of $U$ of size $h \times (h+1)$ obtained by taking rows $i_1, \ldots, i_h$ and removing the columns $j_1, \ldots, j_{r-1-h}$. Then, the polynomial $m_{\Theta}$ is zero if and only if the rank of $M$ is $< h$. 

Indeed, by \cref{minorsB}, $m_{\Theta}= 0$ if and only if for every $k_i \not \in \Theta$, the minor $U_{\Theta \cup \{ k_i\}}= 0$. This is equivalent to say that all the submatrices of $M$ of size $h \times h$ and all the submatrices of $U$ of size $(h+1) \times (h+1)$ containing $M$ are simultaneously singular. Hence this means that $\mbox{rank}(M) < h$.
  \end{remark}

\section{Linear star configurations of height two}
\label{sectionheight2}

In this section we exploit the results of \cref{sectionJacobiandual} to determine the defining ideal of the Rees algebra of ideals of linear star configurations of height two. In particular, in \cref{intersection} we relate the non-linear equations identified in \cite[3.5 and 4.2]{GSimisT} (see \cref{garrousian-simis-tohaneanu}) to the associated primes of the ideal of maximal minors of the Jacobian dual. 

\subsection{Defining ideal of the Rees algebra}

Our first goal is to identify an ideal $\mathcal{P}$, defined in terms of the polynomials $m_{\Theta}$, as the candidate for the non-linear part of the defining ideal of the Rees algebra of $I_{2,\mathcal{F}}$. The generators of this ideal $\mathcal{P}$ are introduced in the following lemma.

\begin{lem}
\label{irreduciblefactor} \hypertarget{irreduciblefactor}{}
Let $\Theta$ and $m_{\Theta}$ be defined as in \cref{minorsB}. Suppose $m_{\Theta} \neq 0$. Then $m_{\Theta} = f h_{\Theta}$ where $f$ is either a unit or a squarefree monomial in the variables $T_i$ and $h_{\Theta}$ is an irreducible nonzero and non-monomial element of $k[T_1, \ldots, T_t]$.
\end{lem}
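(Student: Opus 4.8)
The plan is to analyze the factorization of $m_\Theta$ by first isolating its monomial content and then showing the remaining cofactor is irreducible. Writing $\{1,\dots,t\}\setminus\Theta=\{k_1,\dots,k_{n+1}\}$, recall from \cref{minorsB} that each monomial of $m_\Theta$ is of the form $(T_{k_1}\cdots T_{k_{n+1}}/T_{k_i})\,U_{\Theta\cup\{k_i\}}$, so $m_\Theta$ is homogeneous of degree $n$ in the variables $T_{k_1},\dots,T_{k_{n+1}}$ and involves no other $T_j$. First I would let $f$ be the greatest common monomial divisor of the (nonzero) terms of $m_\Theta$; since $m_\Theta\in(T_{k_i},T_{k_l})$ for any two indices (as noted right after \cref{minorsB}), no single variable $T_{k_i}$ divides every term, so $f$ is a product of $T_{k_i}$'s each appearing to the first power — i.e.\ $f$ is a squarefree monomial (possibly a unit). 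Write $m_\Theta=f\,h_\Theta$; then $h_\Theta$ is not divisible by any variable and is genuinely non-monomial, since $m_\Theta\neq 0$ has at least two surviving terms with distinct monomial supports (the coefficients $U_{\Theta\cup\{k_i\}}$ that are nonzero contribute distinct squarefree monomials $T_{k_1}\cdots T_{k_{n+1}}/T_{k_i}$).

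The substantive step is irreducibility of $h_\Theta$. Here I would use the key structural fact that $h_\Theta$ has degree $\le 1$ in each variable $T_{k_i}$ it involves — i.e.\ it is \emph{multilinear} in those variables — because each monomial of $m_\Theta$ omits exactly one variable from $\{T_{k_1},\dots,T_{k_{n+1}}\}$ and uses the remaining $n$ to the first power. A polynomial that is multilinear (degree $\le 1$ in every variable) and has no repeated variable-divisors cannot have a nonconstant repeated factor, and if it factored nontrivially as $h_\Theta=g_1g_2$ with both $g_i$ nonconstant, then each variable appearing in $h_\Theta$ appears in exactly one of $g_1,g_2$, so the variable set of $h_\Theta$ would split into a disjoint union $V_1\sqcup V_2$. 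I would then derive a contradiction from the explicit combinatorial shape of the coefficients: pick variables $T_{k_a}\in V_1$ and $T_{k_b}\in V_2$, and consider the monomials of $m_\Theta$ obtained by omitting $k_a$ versus omitting $k_b$; a disjoint-support factorization forces a rigid multiplicative relation among the coefficients $U_{\Theta\cup\{k_i\}}$ that is incompatible with their being (up to sign) minors of a generic-looking matrix $U$ — more precisely, $h_\Theta = g_1(\,\{T_{k_i}\}_{i\in V_1}\,)\cdot g_2(\,\{T_{k_i}\}_{i\in V_2}\,)$ would make $m_\Theta$ itself factor through variables not all appearing in a single term, contradicting that every monomial of $m_\Theta$ is divisible by all but one of the $T_{k_i}$. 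I expect this last incompatibility to be the main obstacle: one must argue carefully, using that each term of $m_\Theta$ misses only one variable, that no bipartition of the variable set is consistent with all the cross-terms being present, and handle the edge cases where $h_\Theta$ might involve only a proper subset of $\{T_{k_1},\dots,T_{k_{n+1}}\}$ (which happens exactly when some $U_{\Theta\cup\{k_i\}}$ vanish).

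Finally I would assemble the pieces: $f$ is a squarefree monomial or a unit by the gcd construction, $h_\Theta$ is nonzero (since $m_\Theta\neq 0$), non-monomial (at least two distinct surviving monomials), and irreducible by the multilinearity argument above. This yields exactly the claimed factorization $m_\Theta=f\,h_\Theta$. A subtlety to double-check is that the coefficients $U_{\Theta\cup\{k_i\}}$ lie in the base field $K$ (not in $K[T]$), so $h_\Theta$ is primitive as a polynomial in $K[T_1,\dots,T_t]$ and irreducibility is genuinely over the full polynomial ring; and one should confirm that the reduction handles the case where $h_\Theta$ could a priori be linear (degree one), in which case irreducibility is automatic once we know it is non-monomial.
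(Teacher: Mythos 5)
Your overall decomposition (extract the monomial gcd $f$, then argue the cofactor is irreducible) is sound, and your irreducibility mechanism is genuinely different from the paper's. The paper writes $h_{\Theta}=\alpha T_{k_1}+\beta$, observes that $\beta$ is a single monomial $T_{k_2}\cdots T_{k_e}U_1$ while $\alpha$ is not divisible by any $T_{k_i}$ with $2\le i\le e$, and concludes irreducibility from $\gcd(\alpha,\beta)=1$; this is shorter than your multilinearity-plus-variable-bipartition argument. Your route does close, but not for the reason you hint at: nothing about $U$ being ``generic-looking'' is needed or available. The correct completion is purely combinatorial. Factors of a homogeneous multilinear polynomial over a domain have disjoint variable sets; since the sets are disjoint there is no cancellation in $g_1g_2$; because every surviving coefficient $U_{\Theta\cup\{k_i\}}$ ($i\le e$) is nonzero, each $g_j$ must contain both the full monomial $\prod_{T\in V_j}T$ and some monomial omitting a variable of $V_j$; multiplying the two ``deficient'' monomials produces a term of $h_{\Theta}$ missing two variables, contradicting that every term of $m_{\Theta}$ misses exactly one. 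You flagged this step as ``to be argued carefully'' rather than carrying it out, so as written it is a sketch at the crucial point.

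There is one genuine gap: you assert that $m_{\Theta}\neq 0$ ``has at least two surviving terms,'' but this does not follow from nonvanishing alone --- if exactly one minor $U_{\Theta\cup\{k_i\}}$ were nonzero, $m_{\Theta}$ would be a nonzero monomial and $h_{\Theta}$ would be a unit, so the claim that $h_{\Theta}$ is non-monomial (and hence the whole lemma) would fail. The paper supplies the missing argument: by \cref{minorsJacDual}, $m_{\Theta}\in I_n(B)\subseteq\mathcal{J}$, and the defining ideal $\mathcal{J}$ of the Rees algebra is prime and contains no monomial in the $T_i$'s, so $m_{\Theta}$ cannot be a monomial and at least two of the $U_{\Theta\cup\{k_i\}}$ are nonzero. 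You need this (or an equivalent linear-algebra argument on the minors of $U$) to get $e\ge 2$. Separately, your justification of squarefreeness of $f$ is faulty: $m_{\Theta}\in(T_{k_i},T_{k_l})$ for all pairs does not imply that no single variable divides every term (indeed $T_{k_i}$ divides every surviving term exactly when $U_{\Theta\cup\{k_i\}}=0$, which is the situation where $f\neq 1$). The conclusion is still correct, but for the simpler reason that every monomial of $m_{\Theta}$ is itself squarefree, so their gcd is squarefree.
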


\begin{proof}
Let $k_1, \ldots, k_{n+1}$ be the indexes not belonging to $\Theta$. Clearly the variable $T_{k_i}$ divides $m_{\Theta}$ if and only if $U_{\Theta \cup \lbrace k_i \rbrace} = 0$. For simplicity rename $U_i:= U_{\Theta \cup \lbrace k_i \rbrace}$. By relabeling the indexes, we can assume that there exists $e \geq 2$ such that $U_i \neq 0$ for $i \leq e$ and $U_i=0$ for $i > e$. 
Indeed by assumption $m_{\Theta} \neq 0 $ and, by \cref{minorsJacDual}, it is in the defining ideal of the Rees algebra of $I_{2, \mathcal{F}}$. Hence $m_{\Theta} $ cannot be a monomial in the variables $T_1, \ldots, T_t$ and therefore at least two minors $U_i$ are nonzero. Now, if $e=n+1$, then $h_{\Theta}=m_{\Theta}.$
Otherwise define 
\begin{equation} 
  \label{htheta} \hypertarget{htheta}{}
    h_{\Theta} \coloneq \frac{m_{\Theta}}{T_{k_{e+1}} \cdots T_{k_{n+1}}}. 
\end{equation} 
We have now that $h_{\Theta}$ can be expressed as $h_{\Theta}= \alpha T_{k_1} + \beta$ where $\alpha= \sum_{i=2}^e (T_{k_2} \cdots T_{k_e}) T_{k_i}^{-1} U_i$  and $\beta= T_{k_2} \cdots T_{k_e} U_1 $. Since $U_i \neq 0$ for every $i \leq e$, we get that $\alpha$ and $\beta$ have no common factors and $h_{\Theta}$ is irreducible.
\end{proof}

\begin{definition}
\label{idealP} \hypertarget{idealP}{}
For every $\Theta$ defined as in \cref{minorsB}, let $h_{\Theta}$ be defined as in \cref{irreduciblefactor}. We denote by $\mathcal{P}$ the ideal generated by the $h_{\Theta}$.
\end{definition}

Notice that the ideal $\mathcal{P}$ is contained in the non-linear part of the defining ideal $\mathcal{J}$ of the Rees algebra of $I_{2,\mathcal{F}}$. Indeed, since $I_n(B) \subseteq \mathcal{J}$, by \cref{minorsJacDual} and \cref{irreduciblefactor}, $m_{\Theta} = f h_{\Theta} \in \mathcal{J}$. But $f$ is either a unit or a squarefree monomial in the variables $T_1, \ldots, T_t$ and cannot be in $\mathcal{J}$. 
Since $\mathcal{J}$ is prime, it follows that $h_{\Theta} \in \mathcal{J}$.

Moreover, observe that if $I_{2,\mathcal{F}}$ satisfies the $G_n$ condition, then $\mathcal{P} = I_n(B)$ and this coincides with the 
non-linear part of $\mathcal{J}$ by \cref{morey-ulrich}. In \cref{orlikterao} we prove that in general $\mathcal{L} + \mathcal{P} = \mathcal{J}$, however $I_{2,\mathcal{F}}$ may no longer satisfy $G_n$. The following example shows that when the $G_n$ condition is not satisfied, one might have a proper containment $ I_n(B) \subsetneq \mathcal{P} $.

\begin{example}
  Let $R=K[x_1, x_2, x_3, x_4]$ and $\mathcal{F} = \lbrace x_1, x_2, x_3, x_4, L_1, L_2 \rbrace, $  where $L_1 = x_1+ x_2+x_3+x_4$ and $L_2= x_2+2x_3+3x_4$. Then, by \cref{Gngeneral} the ideal $I_{2, \mathcal{F}}$ does not satisfy the $G_n$ condition. Notice that 
  $$ U= \bmatrix 
   1 &   0 \\ 
   1 &   1  \\      
   1  &  2 \\  
   1  &  3 \\  
   \endbmatrix \quad \mathrm{and} \quad B= \bmatrix 
   T_1 &   0 &  0 & T_5 & 0 \\ 
   -T_2 & T_2 &  0 & T_5 & T_6  \\      
   0 &  -T_3 &  T_3 & T_5 & 2T_6 \\  
   0 &  0 &  -T_4 &  T_5 - T_4 & 3T_6-T_4 \\  
   \endbmatrix. $$
  In this case, 
  $$\mathcal{L}= (x_1T_1-x_2T_2, x_2T_2-x_3T_3, x_3T_3-x_4T_4, x_4T_4-L_1T_5, x_4T_4-L_2T_6) $$
  and $I_4(B)$ is generated by:
  \begin{eqnarray*}
    m_6 \!\! & = & T_1T_2T_3T_5 -  T_1T_2T_3T_4 + T_1T_2T_4T_5 + T_1T_4T_3T_5 + T_4T_2T_3T_5, \\
    m_5 \!\! & = & 3T_1T_2T_3T_6 -  T_1T_2T_3T_4 + 2T_1T_2T_4T_6 + T_1T_4T_3T_6, \\
    m_3 \!\! & = & T_1T_2T_4T_5 -  2T_1T_2T_4T_6 - T_1T_2T_5T_6 + T_1T_4T_5T_6 + 2T_2T_4T_5T_6,\\
    m_2 \!\! & = & T_1T_4T_3T_5 -  T_1T_6T_3T_4 - T_1T_6T_4T_5 - 2T_1T_6T_3T_5 + T_4T_6T_3T_5, \\
    m_1 \!\! & = & -T_4T_2T_3T_5  +2 T_6T_2T_4T_5 + T_6T_4T_3T_5 +3 T_6T_2T_3T_5.
  \end{eqnarray*}
  Also, $\,\displaystyle{h_1= \frac{m_1}{T_5}= \frac{m_5}{T_1} }\,$ and $\,\mathcal{P}=(m_6, m_3,m_2, h_1) \supsetneq I_4(B)$. \cref{orlikterao} will show that the defining ideal of $\mathcal{R}(I_{2, \mathcal{F}})$ is $\mathcal{L} + \mathcal{P}$.
  Each of the polynomials $h_{i}$ has the form $\partial D_i$ for dependency $D_i$ among the elements $x_1, x_2, x_3, x_4, L_1, L_2$ as in \cref{dependency} and \cref{deltadependency}. We have:
  \begin{eqnarray*}
     D_6 \!\!\! & \colon & x_1+x_2+x_3+x_4-L_1 = 0, \\
     D_5 = D_1 \!\!\! & \colon & x_2+2x_3+3x_4-L_2 = 0, \\
    D_3 \!\!\! & \colon & 2x_1+x_2-x_4-2L_1+L_2 = 0, \\
     D_2 \!\!\! & \colon & x_1-x_3-2x_4-L_1+L_2=0.
  \end{eqnarray*}
\end{example}

In the previous example, the non-linear part of the defining ideal of $\mathcal{R}(I_{2, \mathcal{F}})$ is generated by the polynomials $\partial D$ corresponding to the minimal dependencies $D$ among the elements of $\mathcal{F}$. This is true in general. Indeed, in \cref{orlikterao} below, we show that the defining ideal of $\mathcal{R}(I_{2,\mathcal{F}})$ is $\mathcal{L} + \mathcal{P}$, where $\mathcal{L}=(\lambda_1, \ldots, \lambda_{t-1})$ is the ideal of linear relations of $I_{2,\mathcal{F}}$. From the presentation matrix of $I_{2,\mathcal{F}}$ it is clear that $ \displaystyle{\lambda_i= F_i T_i - F_{i+1} T_{i+1}} $ for every $i=1, \ldots, t-1$.

\begin{lem}
 \label{zerorowlemma}\hypertarget{zerorowlemma}{}
  With the assumptions and notations of \cref{linearsetting}, let $I_{2,\mathcal{F}}$ be a linear star configuration of height 2. Assume that, up to reordering the variables, the first row of the matrix $U$ is zero.  
  Set $\mathcal{G}= \lbrace x_2, \ldots, x_n, L_1, \ldots, L_r  \rbrace.$  Let $B$ and $B^*$ be Jacobian dual matrices for $I_{2, \mathcal{F}}$ and $I_{2,\mathcal{G}}$ respectively. Then $$ I_n(B)=(T_1) I_{n-1}(B^*). $$ Moreover, the ideal $\mathcal{P}$ defined in \cref{idealP} for $I_{2, \mathcal{F}}$ and for $I_{2,\mathcal{G}}$ is the same.
\end{lem}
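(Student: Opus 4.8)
The plan is to exploit the very explicit shape of the Jacobian dual matrix in \cref{jacobiandual}. When the first row of $U$ is zero, all the entries $u_{1k}$ vanish, so the first row of $B(M)$ in \cref{jacobiandual} becomes $(T_1, 0, \ldots, 0, 0, \ldots, 0, -T_n)$ — wait, more precisely the top-right block entries $u_{k1}T_{n+k}$ all vanish and the last column's top entry $u_{1n}T_{n+1} - T_n$ becomes $-T_n$. Actually let me reconsider: the cleanest approach is to choose the presentation matrix for $I_{2,\mathcal{F}}$ so that the first row of $B(M)$ has $T_1$ as its only entry involving $T_1$, and after the column operations producing \cref{jacobiandual}, the first row is sparse. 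The key observation is that since $u_{1k}=0$ for all $k$, the first row of $B$ reads $(T_1, 0, 0, \ldots, 0 \mid 0, 0, \ldots, 0)$ in the $A$-columns except position one, and $0$ in all the $C$-columns except the last, whose top entry is $u_{1n}T_{n+1}-T_n = -T_n$ — hmm, but $u_{1n}$ is an entry of the first row, so it is zero too. So actually with the first row of $U$ zero, we need to be slightly careful since the variable $x_1$ is special. I would first re-derive the Jacobian dual in this degenerate case directly from \cref{presentationmatrix}, tracking how the column operations simplify.

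**Key steps.** First, I would write down $B$ explicitly when the first row of $U$ vanishes, showing that its first row has a single nonzero entry, namely $T_1$ in the first column $A_1$ (after suitable column operations; the entry $-T_2$ in the $(2,1)$ slot can be cleared, etc.). Second, I would perform Laplace expansion of each $n\times n$ minor $p_\Psi$ along this first row: every maximal minor either contains the column $A_1$, in which case expanding gives $T_1$ times the corresponding $(n-1)\times(n-1)$ minor of the matrix $B^*$ obtained by deleting row $1$ and column $A_1$; or it omits $A_1$, in which case the first row of the submatrix is identically zero and the minor vanishes. This immediately yields $I_n(B) = (T_1)\,I_{n-1}(B^*)$, once I check that $B^*$ is precisely a Jacobian dual matrix for $I_{2,\mathcal{G}}$ of the expected shape $(n-1)\times(t-2)$ in the variables $T_1, \ldots, T_t$ relabeled appropriately — this is \cref{jacobeight2} applied with $n$ replaced by $n-1$ and the set $\mathcal{G}$. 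Third, for the statement about $\mathcal{P}$: by \cref{minorsB} the polynomial $m_\Theta$ attached to a set $\Theta$ with $1 \notin \Theta \cup \{\text{indices}\}$... here the point is that the minors $U_\chi$ of $U$ are unchanged when we pass from $\mathcal{F}$ to $\mathcal{G}$ (the first row of $U$ contributes nothing to any minor $U_\chi$ whose row set avoids $1$, and any $U_\chi$ whose row set contains $1$ is zero), and one checks the bijection between the relevant $\Theta$'s for $\mathcal{F}$ and those for $\mathcal{G}$ preserves $m_\Theta$ up to the factor $T_1$ that is stripped off in \cref{irreduciblefactor}; hence the irreducible non-monomial factors $h_\Theta$ coincide, giving the same ideal $\mathcal{P}$.

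**Main obstacle.** The routine-but-delicate part is bookkeeping: making sure the presentation matrix $M$ and the column operations are chosen so that $x_1$ (equivalently $T_1$) really does end up isolated in the first row, and matching the sign conventions and index shifts in \cref{minorsB} between the "$n$" version (for $\mathcal{F}$) and the "$n-1$" version (for $\mathcal{G}$). In particular I expect the genuine work to be verifying that after deleting the first row and the column $A_1$, the resulting matrix $B^*$ literally equals (up to harmless column operations and relabeling) the Jacobian dual of $I_{2,\mathcal{G}}$ described by \cref{jacobiandual}, so that \cref{minorsJacDual} applies verbatim to it. The equality $\mathcal{P}_{\mathcal{F}} = \mathcal{P}_{\mathcal{G}}$ then follows formally, since both are generated by the same set $\{h_\Theta\}$: the index sets $\Theta \subseteq \{1,\ldots,t\}\setminus\{n\}$ that give nonzero $m_\Theta$ for $\mathcal{F}$ are exactly those avoiding $1$ (as $T_1 \mid m_\Theta$ otherwise or $m_\Theta$ has no contribution from row $1$), and for such $\Theta$ the formula for $h_\Theta$ is insensitive to whether $x_1$ is present in the ambient set. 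I would close by remarking that the first claim gives $I_n(B) = (T_1) I_{n-1}(B^*)$ and the second follows by comparing the generating sets $\{h_\Theta\}$ directly.
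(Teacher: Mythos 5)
Your proposal is correct and follows essentially the same route as the paper's (very terse) proof: observe that when the first row of $U$ vanishes the matrix $B$ of \cref{jacobiandual} has block form with first row $(T_1,0,\ldots,0)$ and lower-right block $B^*$, expand maximal minors along that row to get $I_n(B)=(T_1)I_{n-1}(B^*)$, and note that the minors $U_\chi$ and hence the factors $h_\Theta$ are unaffected by discarding $x_1$. The only slip is your momentary confusion about the entry $u_{1n}T_{n+1}-T_n$ (which sits in the last row of $B$, governed by the last row of $U$, not the first), but this is not load-bearing and you arrive at the correct picture.
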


\begin{proof}
 Expressing $B$ and $B^*$ as in \cref{jacobiandual}, it is easy to observe that 
  $$ \footnotesize B= \bmatrix T_1 & 0 & \ldots & 0 \\ -T_2 & & &  \\ 0 & & B^* & \\ \vdots & &  & \\
     0 & & & 
  \endbmatrix. $$ 
 Both statements now follow from the definitions.
\end{proof}

\begin{thm}
\label{orlikterao}\hypertarget{orlikterao}{}
With the assumptions and notations of \cref{linearsetting}, let $I_{2,\mathcal{F}}$ be a the ideal of a linear star configuration of height two. Then, the ideal $\mathcal{P}$ is the non-linear part of the defining ideal of the Rees algebra of $I_{2,\mathcal{F}}$. In particular $\mathcal{J}= \mathcal{L} + \mathcal{P}.$
\end{thm}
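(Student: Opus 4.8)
The plan is to show the two inclusions $\mathcal{L}+\mathcal{P}\subseteq\mathcal{J}$ and $\mathcal{J}\subseteq\mathcal{L}+\mathcal{P}$ separately. The first is essentially already established in the discussion preceding the theorem: $\mathcal{L}\subseteq\mathcal{J}$ always, and for each admissible $\Theta$ we have $m_\Theta\in I_n(B)\subseteq\mathcal{J}$ by \cref{minorsJacDual} and \cref{eqJacdual}; since $m_\Theta=f\,h_\Theta$ with $f$ a squarefree monomial (or unit) by \cref{irreduciblefactor}, and $f\notin\mathcal{J}$ because $\mathcal{J}$ contains no monomial in the $T_i$'s, primality of $\mathcal{J}$ forces $h_\Theta\in\mathcal{J}$. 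Hence $\mathcal{P}\subseteq\mathcal{J}$. The real content is the reverse inclusion.

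For $\mathcal{J}\subseteq\mathcal{L}+\mathcal{P}$, the strategy is to reduce to the case where $I_{2,\mathcal{F}}$ satisfies $G_n$, where the statement of \cref{morey-ulrich} gives $\mathcal{J}=\mathcal{L}+I_n(B(M))$ directly, and then one checks $I_n(B)\subseteq\mathcal{L}+\mathcal{P}$ (indeed $I_n(B)\subseteq\mathcal{P}$ when $G_n$ holds, since all $m_\Theta$ are then irreducible up to monomials, so $h_\Theta=m_\Theta$). The reduction should proceed by induction on $n$ (the number of ``coordinate'' forms $x_i$), using \cref{zerorowlemma}: if some row of $U$, say the first, is zero after reordering, then setting $\mathcal{G}=\{x_2,\dots,x_n,L_1,\dots,L_r\}$ we have $I_{2,\mathcal{F}}\cong I_{2,\mathcal{G}}$ essentially as an ideal with one extra generator that is a variable, the linear part $\mathcal{L}$ acquires one extra equation $\lambda_1=x_1T_1-x_2T_2$, and by \cref{zerorowlemma} the ideal $\mathcal{P}$ is unchanged. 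One then needs a lemma comparing $\mathcal{R}(I_{2,\mathcal{F}})$ with $\mathcal{R}(I_{2,\mathcal{G}})$ — this is where the bigraded/flat-base-change type argument enters: adjoining a variable to $\mathcal{F}$ corresponds to a controlled extension of the Rees algebra whose defining ideal is $\mathcal{L}_{\text{new}}+(\text{old defining ideal})$. On the other hand, if no row of $U$ is zero — more precisely, if every subset of $\mathcal{F}$ of cardinality $n$ is a regular sequence — then by \cref{Gngeneral}, $I_{2,\mathcal{F}}$ satisfies $G_n$ and \cref{morey-ulrich} applies. The subtle intermediate case is when $U$ has no zero row but still some $n$-subset of $\mathcal{F}$ fails to be a regular sequence: here one must argue more carefully, presumably again via \cref{sregseq-minorsU}/\cref{minors-regseq}, that the vanishing structure of the $m_\Theta$ and the factorizations $m_\Theta=f h_\Theta$ exactly account for the ``missing'' minors, i.e. that passing from $I_n(B)$ to $\mathcal{P}$ precisely repairs the failure of $G_n$.

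I would structure the induction on the pair (number of $x_i$'s that occur nontrivially in some $L_j$, or equivalently $\operatorname{rank}$ considerations for $U$), peeling off zero rows of $U$ one at a time via \cref{zerorowlemma}, each time reducing $n$ by one while preserving $\mathcal{P}$ and adding one linear equation to $\mathcal{L}$; the base case is when $U$ has no zero row. A cleaner route may be to prove directly that $\mathcal{L}+\mathcal{P}$ is prime of the correct dimension $n+t$ (= $\dim\mathcal{R}(I_{2,\mathcal{F}})$): one knows $\mathcal{L}+\mathcal{P}\subseteq\mathcal{J}$, $\mathcal{J}$ is prime of dimension $n+t$ (since $\mathcal{R}(I)$ is a domain of that dimension), so equality follows if $\dim(S/(\mathcal{L}+\mathcal{P}))\le n+t$ and $\mathcal{L}+\mathcal{P}$ is unmixed/prime, which one could attempt via a Cohen–Macaulayness argument leveraging \cref{garrousian-simis-tohaneanu}(3).

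The main obstacle I anticipate is precisely the intermediate case — showing that the irreducible factors $h_\Theta$ (rather than the raw minors $m_\Theta$) generate the full non-linear part when $G_n$ fails. The results of \cref{sectionJacobiandual} only give $I_n(B)$, and \cref{morey-ulrich} does not apply; so one cannot simply quote a Rees-algebra structure theorem. The resolution should come from combining \cref{zerorowlemma} (to strip coordinate directions) with the observation from \cref{vanishingofmtheta} that $m_\Theta$ vanishes or factors exactly according to rank drops of submatrices of $U$ — these rank drops being, by \cref{sregseq-minorsU}, the same data that control which subsets of $\mathcal{F}$ fail to be regular sequences, i.e. the $NLT$-locus from \cref{nonlineartypelocus}. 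Making this correspondence precise enough to conclude $\mathcal{J}=\mathcal{L}+\mathcal{P}$ — quite possibly by localizing at the relevant primes $\mathfrak{q}$ generated by subsets of $\mathcal{F}$, where by \cref{localization} the ideal becomes a star configuration on a regular sequence (hence, by \cref{remarkc=2}, of linear type with Cohen–Macaulay Rees algebra) or a complete intersection — is the heart of the argument.
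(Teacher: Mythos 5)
Your forward inclusion $\mathcal{L}+\mathcal{P}\subseteq\mathcal{J}$ is correct and is exactly the argument given in the paper just after \cref{idealP}. The reverse inclusion, however, is where all the work lies, and your sketch does not close it. The reduction you propose --- stripping zero rows of $U$ via \cref{zerorowlemma} until $G_n$ holds and then quoting \cref{morey-ulrich} --- cannot succeed in general, because the failure of $G_n$ is not caused only by zero rows of $U$. The paper's own example ($L_1=x_1+x_2+x_3+x_4$, $L_2=x_2+2x_3+3x_4$ in $K[x_1,\dots,x_4]$) has a matrix $U$ with no zero row, yet $I_{2,\mathcal{F}}$ fails $G_n$ and $I_n(B)\subsetneq\mathcal{P}$. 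This is precisely your ``subtle intermediate case,'' and you correctly identify it as the main obstacle, but you leave it unresolved: neither the localization idea nor the primality/dimension-count alternative is carried out, and proving directly that $\mathcal{L}+\mathcal{P}$ is prime looks no easier than the original statement.

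The missing idea is to use \cref{garrousian-simis-tohaneanu}(1),(2) (i.e.\ \cite[3.5 and 4.2]{GSimisT}): $I_{2,\mathcal{F}}$ is of fiber type and the defining ideal of its fiber cone is generated by the polynomials $\partial D$ attached to linear dependencies $D$ among $t-1$ of the $F_i$'s. This reduces $\mathcal{J}\subseteq\mathcal{L}+\mathcal{P}$ to the purely linear-algebraic claim that every $\partial D$ lies in $\mathcal{P}$. The paper proves this by first checking (via the map $\varphi$ to the Rees algebra) that each $h_\Theta$ is itself a $\partial D$, and then arguing by induction on $r$: dependencies with some coefficient $a_i=0$ are handled by restricting to the smaller configuration $\mathcal{F}\setminus\{L_i\}$, and for a dependency with all $a_i\neq 0$ one writes $\partial D$ explicitly as a $K[T]$-linear combination of the $h_{\Theta_i}$ for $\Theta_i=\chi\cup\{n+1,\dots,n+r-h\}\setminus\{n+i\}$, verifying the coefficients with Cramer's rule and a cofactor expansion of the relevant minors of $U$. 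Without the input from \cite{GSimisT} identifying the nonlinear part of $\mathcal{J}$ with the span of the $\partial D$'s, your outline has no handle on what $\mathcal{J}$ actually is outside the $G_n$ locus, so the proposal as written has a genuine gap.
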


\begin{proof}
By \cref{garrousian-simis-tohaneanu} it is sufficient to prove that the polynomial $\partial D$ associated to any dependency $D$ among the elements of $\mathcal{F}$ is in the ideal $\mathcal{P}$. 

First we show that every polynomial $h_{\Theta}$ 
is of the form $\partial D$ for some dependency $D$.
Indeed consider the natural map $\varphi \colon R[T_1, \ldots, T_t] \to \mathcal{R}(I_{2, \mathcal{F}})$  and let $G= \prod_{i=1}^t F_i$. Then, using \cref{htheta}, we have that
  $$0= \varphi(h_{\Theta}) = \sum_{i=1}^e (-1)^{\alpha(\Theta, i)}U_{\Theta \cup \lbrace k_i \rbrace} \varphi\Big(\frac{T_{k_1}\cdots T_{k_e}}{T_{k_i}}\Big)  =  \sum_{i=1}^e (-1)^{\alpha(\Theta, i)}U_{\Theta \cup \lbrace k_i \rbrace} G^{\,e-2} F_{k_i}. $$
It follows that, after dividing by $G^{e-2}$, 
the last term is a dependency $D$ among the elements of $\mathcal{F}$ in the sense of \cref{dependency}. 
Therefore, $h_{\Theta}$ is the corresponding polynomial $\partial D$ as defined in \cref{deltadependency}.

To prove that, for any dependency $D$, $\partial D$ is in $\mathcal{P}$ we work by induction on $r \geq 1$.
If $r=1$, up to multiplying by units, there is only one dependency $D$ and clearly $\partial D = u h_{\Theta}$ with $\Theta = \emptyset$ and for some $u \in K$.

Assume now that $r \geq 2$ and notice that any dependency $D$ can be written as
$$ D \colon a_1 L_1 + \ldots + a_r L_r + b_1 x_1 + \ldots + b_n x_n = 0,  $$ where the coefficients $b_j$ are uniquely determined after $a_1, \ldots, a_r \in K$ are chosen. Using the inductive hypothesis we can deal with all the dependencies such that at least one of the coefficients $ a_1, \ldots, a_r $ is zero. For simplicity assume that $a_r=0$ and consider the star configuration $I_{2,\mathcal{F'}}$ where $\mathcal{F'}:= \mathcal{F} \setminus \lbrace L_r \rbrace$. 
Let $\mathcal{P'}$ be the ideal generated by the polynomials $h_{\Theta}$ constructed for $I_{2,\mathcal{F'}}$ as in \cref{idealP}. 
We can look at $\mathcal{P'}$ as an ideal of $K[T_1, \ldots, T_t]$.
Using \cref{jacobheight2}, \cref{minorsJacDual} and \cref{irreduciblefactor}, it can be easily checked that $\mathcal{P'} \subseteq \mathcal{P}$. Now, all the dependencies $D$ such that $a_r=0$ are also dependencies among the elements of $\mathcal{F'}$. Hence, by the inductive hypothesis, the corresponding polynomials $\partial D \in \mathcal{P'} \subseteq \mathcal{P} $. 


To conclude we can restrict to the case where $ a_1, \ldots, a_r \neq 0 $, and since the polynomial $\partial D$ is unique up to multiplying scalars, we can further assume that $a_1=1$. Let now $U$ be the $n \times r$ matrix of the coefficients $u_{ij}$ as in \cref{linearsetting}.
By \cref{zerorowlemma} we can always reduce to the case in which no rows of $U$ are zero. Hence, once fixed such $ a_1, \ldots, a_r $, 
let $\chi \subseteq \lbrace 1, \ldots, n \rbrace$ be a (possibly empty) maximal set of indexes such that $b_j=0$ for every $j \in \chi$ and 
the rows of the matrix $U$ indexed by the elements of $\chi$ are linearly independent. 
By definition $|\chi| \leq n$.
We show also that $|\chi| \leq r-1$. Indeed, by way of contradiction and by relabeling, say that $\chi \supseteq \lbrace  1, \ldots, r \rbrace$. Thus $b_1, \ldots, b_r=0$, which implies that the linear forms $x_{r+1}, \ldots, x_n, L_1, \ldots, L_r$ are not linearly independent. But by  \cref{minors-regseq} the assumption that the first $r$ rows of $U$ are linearly independent implies that $x_{r+1}, \ldots, x_n, L_1, \ldots, L_r$ form a regular sequence, a contradiction.

Without loss of generality, say now that $\chi = \lbrace  1, \ldots, h \rbrace$ with $0 \leq h \leq \min \lbrace r-1, n \rbrace$. The dependency $D$ becomes $ \, D \colon L_1 + a_2 L_2+ \ldots + a_r L_r + b_{h+1} x_{h+1} + \ldots + b_n x_n = 0$. If $h= r-1$, the equations with respect to $x_1, \ldots, x_{r-1}$ determine a linear system in $r-1$ equations $\, -u_{1k} = a_2 u_{2k} + \ldots + a_r u_{rk}\, $ for $k= 1,\ldots, r-1$ and $r-1$ indeterminates $ a_2, \ldots, a_{r}$. The assumption that the first $r-1$ rows of $U$ are linearly independent forces this system to have a unique solution. Hence, up to multiplying units, $D$ is the only dependency related to such set $\chi$, and necessarily, setting $\Theta = \chi$, the corresponding polynomial $\partial D$ is $h_{\Theta}$.

Suppose now that $h < r-1$. Since the first $h$ rows of $U$ are linearly independent, by permuting the columns we may assume that the minor $W:= U_{\chi \cup \lbrace n+1, \ldots, n+r-h \rbrace} $ is nonzero. 
For $i=1, \ldots, r-h$ define $$ \Theta_i := \chi \cup \lbrace n+1, \ldots, n+r-h \rbrace \setminus \lbrace n+i \rbrace. $$ By construction $|\Theta_i| = r-1$ and $h_{\Theta_{i}}$ is well-defined. We claim that 
$$ \partial D = \sum_{i=1}^{r-h} \frac{a_i}{W} \left( \frac{T_{n+1} \cdots T_{n+r-h}}{T_{n+i}} \right) h_{\Theta_{i}} \in \mathcal{P}. 
  $$ 
  To do this we need to check that the coefficients of each term coincide. The quantity on the right-hand side is a sum of terms of the form $ c_j (T_{h+1} \cdots T_{n+r})(T_j^{-1}) $ for $j \geq h+1$. We have to prove that $c_j =  b_j $ if $j \leq n$ and $c_j= a_{j-n}$ if $j \geq n+1$. 
  We consider different subcases. \\
  \textbf{Case (i)}: $n+1 \leq j \leq n+r-h$. This term appears only once among the terms of $h_{\Theta_{j-n}}$. Observe that by \cref{minorsB}, $\,\alpha(\Theta_i, n+i)= -h$.
  Thus the coefficient of the term we are considering is $\,(-1)^{h} U_{\Theta_{j-n} \cup \lbrace j \rbrace} = (-1)^{h}W$. Hence, $\,c_j= (-1)^{h}a_{j-n}(W^{-1})W = (-1)^{h}a_{j-n}$. \\
  \textbf{Case (ii)}: $j > n+r-h $. For $k= 1,\ldots, h$, consider the linear system on the $h$ equations 
  $$a_{r-h+1}u_{r-h+1,k} + \ldots + a_r u_{rk} = -(a_1u_{1,k} + \ldots + a_{r-h}u_{r-h,k}).$$ 
  Set $\sigma_{i,j} \coloneq \alpha(\Theta_i,n+i)+ \alpha(\Theta_i,j)$. Observe that by \cref{minorsB} $\,\sigma_{i,j}= j - (n+r-h)$.
  By Cramer's rule we get $$ c_j = (-1)^h \sum_{i=1}^{r-h} (-1)^{\sigma_{i,j}} \frac{a_i}{W} \, U_{\Theta_i \cup \lbrace j \rbrace}= (-1)^h a_{j-n}. $$
  \textbf{Case (iii)}: $h < j \leq n $.  Notice that $b_j = -(a_1u_{1j}+ \ldots + a_ru_{rj}) $ and $$c_j = \sum_{i=1}^{r-h} (-1)^{\sigma_{i,j}} \frac{a_i}{W} \, U_{\Theta_i \cup \lbrace j \rbrace},$$ where in this case $\sigma_{i,j}=-h+1$.
  Computing the minor $U_{\Theta_i \cup \lbrace j \rbrace}$ with respect to the $j$-th row, we express $$  U_{\Theta_i \cup \lbrace j \rbrace} = u_{ij} W + \sum_{k=r-h+1}^r (-1)^{r-h+k} u_{kj} \,U_{\Theta_i \cup \lbrace n+k \rbrace}. $$ Hence, by replacing $U_{\Theta_i \cup \lbrace j \rbrace}$ in the equation for $c_j$ and applying (ii) we get
    \begin{eqnarray*}
      c_j  & = & (-1)^{1-h} \Big(\sum_{i=1}^{r-h}  a_i u_{ij} + \sum_{i=1}^{r-h}  \frac{a_i}{W} \sum_{k=r-h+1}^r (-1)^{r-h+k} u_{kj} U_{\Theta_i \cup \lbrace n+k \rbrace}\Big) \\
       & = & (-1)^{1-h} \Big(\sum_{i=1}^{r-h} a_i u_{ij} + \sum_{k=r-h+1}^r  u_{kj} a_k \Big) = (-1)^{h} b_j.
    \end{eqnarray*}
  This concludes the proof after multiplying all the $c_j$ by $(-1)^h$.
\end{proof}

 \begin{remark}
    \label{degreesOT} \hypertarget{degreesOT}{}
  From \cref{orlikterao} and its proof it follows that the polynomials $h_{\Theta}$ defined in \cref{irreduciblefactor} are a minimal generating set for the non-linear part $\mathcal{P}$ of the defining ideal $\mathcal{J}$ of $\mathcal{R}(I_{2, \mathcal{F}})$. Moreover, \cref{htheta} provides an explicit formula for each $h_{\Theta}$. In particular, the degrees of the non-linear equations of the Rees algebra can be explicitly calculated from the coefficients of the linear forms $\lbrace L_1, \ldots, L_r \rbrace$ and are always between 4 and $n$. In fact, since any three of $x_1, \ldots, x_n, L_1, \ldots, L_r$ are a regular sequence, there is no dependency of degree at most 3. 
 \end{remark}

  \begin{cor}
    \label{corr=1} \hypertarget{corr=1}{}
    Assume $r=1$. Set $\displaystyle{\mathcal{F}= \lbrace x_1, \ldots, x_n, u_{e+1}x_{e+1}+\ldots+ u_nx_n \rbrace}$ 
with $e \geq 0$ and $u_i \neq 0$ for all $e+1 \leq i \leq n$. Then, the defining ideal of $\mathcal{R}(I_{2,\mathcal{F}})$ is equal to $\mathcal{L}+ (f)$ where 
$$ f= T_{e+1}\cdots T_{n} - \sum_{i=e+1}^n \left( \frac{T_{e+1}\cdots T_{n+1}}{T_i} \right) u_i.   $$ 
  \end{cor}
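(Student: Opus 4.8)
The plan is to specialize \cref{orlikterao} to the case $r=1$, where the combinatorics collapses almost entirely. Since $r=1$, the only admissible index set in \cref{minorsB} is $\Theta=\emptyset$, so $\mathcal{P}$ is principal, generated by a single element $h_{\emptyset}$, and \cref{orlikterao} gives $\mathcal{J}=\mathcal{L}+(h_{\emptyset})$. It then remains to compute $h_{\emptyset}$ explicitly for the stated family $\mathcal{F}=\lbrace x_1,\ldots,x_n, L_1\rbrace$ with $L_1=u_{e+1}x_{e+1}+\ldots+u_nx_n$, i.e. $u_{1i}=0$ for $i\leq e$ and $u_{1i}=u_i\neq 0$ for $i>e$, and to check it agrees with the displayed $f$ after cancelling the monomial factor.

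First I would invoke the base case $r=1$ of the proof of \cref{minorsJacDual}: there the single maximal minor of $B$ was computed (by induction on $n$) to be
\[
 m_{\emptyset}= -T_1\cdots T_n+\sum_{i=1}^n\Big(\frac{T_1\cdots T_n}{T_i}\Big)u_{1i}.
\]
With $u_{1i}=0$ for $i\leq e$, the sum runs only over $e+1\leq i\leq n$, so
\[
 m_{\emptyset}= -T_1\cdots T_n+\sum_{i=e+1}^n\Big(\frac{T_1\cdots T_n}{T_i}\Big)u_{i}.
\]
Every term here is divisible by $T_1\cdots T_e$ (each monomial of the sum omits one $T_i$ with $i\geq e+1$, hence retains all of $T_1,\ldots,T_e$), so the squarefree monomial factor extracted in \cref{irreduciblefactor} is $f_{\mathrm{mon}}=T_1\cdots T_e$ (taking it to be a unit when $e=0$), and
\[
 h_{\emptyset}=\frac{m_{\emptyset}}{T_1\cdots T_e}= -T_{e+1}\cdots T_n+\sum_{i=e+1}^n\Big(\frac{T_{e+1}\cdots T_n}{T_i}\Big)u_{i}.
\]
Noting that $T_{n+1}=T_{t}$ (since $t=n+r=n+1$) and $T_{e+1}\cdots T_{n+1}/T_i=(T_{e+1}\cdots T_n/T_i)\,T_{n+1}$ for $i\leq n$, one sees that $-h_{\emptyset}$ is exactly the polynomial $f$ in the statement (the overall sign being irrelevant, as $\mathcal{P}$ is generated by $h_{\emptyset}$ up to scalars, cf. \cref{degreesOT}). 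Finally, $f$ is indeed non-monomial and irreducible by \cref{irreduciblefactor} (note $e\leq n-2$, since $L_1$ together with any two of the $x_i$ must form a regular sequence, so there are at least two nonzero $u_i$), confirming $\mathcal{P}=(f)$.

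The only genuine content is the verification that the monomial factor is precisely $T_1\cdots T_e$ and the bookkeeping of signs and of the index $T_{n+1}$ versus $T_t$; everything else is a direct substitution into \cref{orlikterao}. I do not expect any real obstacle here: the case $r=1$ is exactly where the $p_\Theta^{(i,j)}$ machinery of \cref{moreminors} and \cref{lemmaminors} becomes vacuous (there is a unique minor and no column operations are needed), so the result is essentially immediate once the formula for $m_{\emptyset}$ from the proof of \cref{minorsJacDual} is in hand.
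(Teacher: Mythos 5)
Your proposal is correct and is exactly the argument the paper intends (the paper states this corollary without proof, as an immediate specialization of \cref{orlikterao} together with the $r=1$ computation of $m_{\emptyset}$ in the proof of \cref{minorsJacDual}). One bookkeeping remark: the displayed formula for $m_{\emptyset}$ that you quote from the proof of \cref{minorsJacDual} has a typo in the paper — by \cref{minorsB} the sum should read $\sum_{i=1}^{n}\bigl(T_{1}\cdots T_{n+1}/T_{i}\bigr)u_{1i}$, i.e.\ each term carries the factor $T_{n+1}$ (this is what the determinant expansion actually gives, since the last column of $B$ has entries $u_{1j}T_{n+1}$) — so your $h_{\emptyset}$ as displayed is missing the factor $T_{n+1}$ in its sum; once that is restored, $-h_{\emptyset}$ equals $f$ exactly as you assert, and the rest of your verification (the monomial factor $T_{1}\cdots T_{e}$, the bound $e\leq n-2$, and irreducibility via \cref{irreduciblefactor}) is correct.
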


\subsection{Primary decomposition}

The aim of this subsection is to interpret the ideal $\mathcal{P}$ in terms of the primary decomposition of $I_n(B)$. We have already observed that when $I_{2, \mathcal{F}}$ satisfies the $G_n$ condition, $I_n(B) = \mathcal{P}$ is the defining ideal of the fiber cone of $I_{2, \mathcal{F}}$, thus a prime ideal.

When the $G_n$ condition is no longer satisfied, $I_n(B)$ is not a prime ideal and we believe that $\mathcal{P}$ is one of its minimal primes. In particular we state the following conjecture.
\begin{conj}
The ideal  $\mathcal{P}$ is the only associated prime of $I_n(B)$ not generated by monomials.
\end{conj}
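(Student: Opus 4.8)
The plan is to produce an explicit primary decomposition of $I_n(B)$, regarded as an ideal of $K[T_1,\dots,T_t]$, in which $\mathcal{P}$ is the unique non-monomial component, and then read off the associated primes. First I would record three basic facts. The ideal $\mathcal{P}$ is prime: by \cref{orlikterao} the defining ideal of $\mathcal{R}(I_{2,\mathcal{F}})$ is $\mathcal{L}+\mathcal{P}$, and passing to the residue field $R/(x_1,\dots,x_n)=K$ kills the linear relations $\lambda_i=F_iT_i-F_{i+1}T_{i+1}$, so $\mathcal{P}$ is the defining ideal of the fiber cone $F(I_{2,\mathcal{F}})=K[g_1,\dots,g_t]$, a domain. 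It is not generated by monomials, since a monomial prime of $K[T_1,\dots,T_t]$ has the form $(T_i : i\in S)$, whereas $T_i\notin\mathcal{P}$ for every $i$ (as $g_i\neq 0$ in the fiber cone). Finally $I_n(B)\subseteq\mathcal{P}$, because by \cref{minorsJacDual} the ideal $I_n(B)$ is generated by the $m_\Theta$ with $n\notin\Theta$ and by \cref{irreduciblefactor} each $m_\Theta=f_\Theta h_\Theta$ is a multiple of $h_\Theta\in\mathcal{P}$.

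Next I would localize away from the coordinate hyperplanes. Inverting $T_1\cdots T_t$ makes every monomial factor $f_\Theta$ a unit, so $h_\Theta=m_\Theta/f_\Theta$ lies in $I_n(B)$ after this localization; hence $I_n(B)$ and $\mathcal{P}$ agree after inverting $T_1\cdots T_t$. Since $\mathcal{P}$ is prime with $T_1\cdots T_t\notin\mathcal{P}$, it follows that $\mathcal{P}$ is a minimal prime of $I_n(B)$, in fact the unique one avoiding all the variables, so (minimal primes being associated) $\mathcal{P}$ is an associated prime of $I_n(B)$. Moreover, as $\operatorname{Ass}$ commutes with localization, the localized equality shows that any associated prime of $I_n(B)$ avoiding all the variables localizes to the unique associated prime of $\mathcal{P}[(T_1\cdots T_t)^{-1}]$, hence equals $\mathcal{P}$. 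Therefore every associated prime of $I_n(B)$ other than $\mathcal{P}$ contains some $T_i$, and it remains to show that each such prime is generated by monomials.

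For this I would argue by induction on $r=t-n$, exhibiting a primary decomposition $I_n(B)=\mathcal{P}\cap Q_1\cap\cdots\cap Q_k$ with every $Q_j$ a monomial primary ideal; then $\operatorname{Ass}(I_n(B))\subseteq\{\mathcal{P}\}\cup\bigcup_j\operatorname{Ass}(Q_j)$ and all terms past $\mathcal{P}$ are monomial primes. When $r=1$, $I_n(B)=(f_\emptyset h_\emptyset)$ is principal, and its primary decomposition in the unique factorization domain $K[T_1,\dots,T_t]$ is $(h_\emptyset)\cap\bigcap_{T_i\mid f_\emptyset}(T_i^{a_i})$, which already has the desired form. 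For the inductive step, after using \cref{zerorowlemma} (together with the elementary behaviour of associated primes under multiplication by a variable) to reduce to the case where the matrix $U$ has no zero row, one deletes a column $C_j$ of $B$; by \cref{minorsJacDual} the ideal $I_n(B')\subseteq I_n(B)$ is generated by the $m_\Theta$ with $n+j\in\Theta$, so its primary decomposition is known inductively, and one then has to account for the extra generators $m_\Theta$ with $n+j\notin\Theta$. This is where the mild hypothesis on $U$ enters: via \cref{sregseq-minorsU} and \cref{vanishingofmtheta} it pins down exactly which submatrices $U_{\Theta\cup\{k\}}$ vanish, hence which monomial factors $f_\Theta$ occur and which $h_\Theta$ coincide, and this is precisely what forces the extra generators to contribute only monomial primary ideals to the decomposition.

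The main obstacle is this last step. Distributing $V(I_n(B))=\bigcap_\Theta\bigl(\bigcup_{T_i\mid f_\Theta}V(T_i)\ \cup\ V(h_\Theta)\bigr)$ yields, besides $V(\mathcal{P})=\bigcap_\Theta V(h_\Theta)$, candidate components of the mixed form $V(T_{i_1})\cap\cdots\cap V(T_{i_\ell})\cap V(h_{\Theta_1})\cap\cdots$ coming from the nontrivial monomial factors, and nothing in the formal setup prevents such a component from being non-monomial and associated. The mild genericity assumption on $U$ is exactly what guarantees that every such mixed locus is absorbed into purely monomial components, so that the corresponding non-monomial primes are redundant and hence not associated. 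Making this bookkeeping precise — over all $\Theta$, over all index subsets along which the minors $U_{\Theta\cup\{k_i\}}$ vanish, and over all resulting candidate primes — and checking that $\mathcal{P}$ is the sole non-redundant non-monomial component, is the heart of the argument and the reason the unconditional statement is only a conjecture.
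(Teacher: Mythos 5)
Your opening moves are sound: $\mathcal{P}$ is the (prime, non-monomial) defining ideal of the fiber cone by \cref{garrousian-simis-tohaneanu} and \cref{orlikterao}, it contains $I_n(B)$ by \cref{minorsJacDual} and \cref{irreduciblefactor}, and the localization at $T_1\cdots T_t$ correctly shows that $\mathcal{P}$ is a minimal (hence associated) prime of $I_n(B)$ and is the unique associated prime containing no variable. But this only reduces the statement to showing that every associated prime containing some $T_i$ is in fact generated by monomials, and that is precisely the part you do not prove. Your proposed induction on $r$ by deleting a column of $B$ never identifies the mechanism that absorbs the ``mixed'' candidate components $V(T_{i_1})\cap\cdots\cap V(h_{\Theta_1})\cap\cdots$, and you concede as much. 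So the proposal is a correct reduction plus a program, not a proof --- which is consistent with the fact that the statement is left as a conjecture in the paper and is only established there under the extra hypothesis that $m_{\Theta}\neq 0$ for every $\Theta$ (\cref{intersection}; see \cref{assumption} for what this means for $U$).

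The missing idea, which is the heart of the paper's argument, is \cref{associated}: whenever a minor $U_{\Theta\cup\lbrace k\rbrace}$ vanishes (so that $T_k$ divides $m_{\Theta}$ and a monomial prime $\mathfrak{p}_{\chi}$ with $\chi=\Theta\cup\lbrace k\rbrace$ enters the picture), the irreducible factors $h_{\Theta}$ and $h_{\Theta'}$ for $\Theta'=\Theta\setminus\lbrace j\rbrace\cup\lbrace k\rbrace$ are associates in $K[T_1,\dots,T_t]$, because by \cref{hthetadependency} they encode the same linear dependency $D$. This is what lets the paper prove the reverse inclusion $Q\cap\mathcal{P}\subseteq I_n(B)$ directly: for a fixed $\Theta$ with $h_{\Theta}\neq m_{\Theta}$ one introduces the auxiliary monomial star-configuration ideal $H=I_{r,\mathcal{E}}$ on the variables indexed by $\Theta$ and the vanishing minors, shows $Q_{\Theta}\subseteq H$, and then shows $Hh_{\Theta}\subseteq I_n(B)$ because each generator $\beta_{\Omega}$ of $H$ satisfies $\beta_{\Omega}h_{\Theta}=u\,m_{\Omega}$ for a unit $u$. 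Without this ``associates'' phenomenon, the distributed union you write down really could leave non-monomial embedded components, and the hypothesis of \cref{assumption} is needed exactly so that the relevant $m_{\Theta'}$ are nonzero and \cref{associated} applies. If you want to salvage the induction-on-$r$ route, you would still need an analogue of \cref{associated} to control the new generators $m_{\Theta}$ with $n+j\notin\Theta$; as it stands, the key step is absent.
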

We prove that the conjecture is true under an additional assumption on the matrix of coefficients $U$.
We start by some observing some properties of the height of $I_n(B)$.


\begin{remark}
  \label{eagon-northcott} \hypertarget{eagon-northcott}{}
  By the Eagon-Northcott Theorem \cite[Theorem 1]{EN}, the ideal $I_n(B)$ has height $\leq r$.
  This upper bound is met if $I_{2,\mathcal{F}}$ satisfies the $G_n$ condition. Indeed, in this case, by \cref{morey-ulrich}, $I_n(B)$ is the defining ideal of the fiber cone of $I_{2,\mathcal{F}}$. Set $\mathcal{F}'= \mathcal{F} \setminus \{ F_t \}$, and call $B'$ the Jacobian dual matrix of $I_{2,\mathcal{F}'}$. By \cref{Gngeneral}, it is easy to observe that also $I_{2,\mathcal{F}'}$ satisfies condition $G_n$ and by \cref{minorsJacDual}, $I_n(B')S \subsetneq I_n(B)$. The fact that these ideals are primes and an inductive argument on $r$ imply that $\het I_n(B) = r$. 
  \end{remark}

However, next result shows that if we remove the $G_n$ assumption, then the height of $I_{n}(B)$ can be arbitrarily smaller than $r$. In the case when $I_{2,\mathcal{F}}$ does not satisfy the $G_n$ condition, by \cref{sregseq-minorsU} the matrix of coefficients $U$ must have some zero minor. Next lemma shows that the presence of such zero minors corresponds to containments of $I_n(B)$ in some monomial prime ideals of height at most $r$.


 \begin{lem}
 \label{monomialprimes} \hypertarget{monomialprimes}{}
Consider a set of indexes $\chi \subseteq \lbrace 1, \ldots, t \rbrace$ such that $|\chi| \leq r$. Let $\mathfrak{p}_{\chi}$ be the prime ideal of $k[T_1, \ldots, T_t]$  generated by the variables $T_k$ for $k \in \chi$.
The following are equivalent:
\begin{enumerate}[\rm(1)]
\item The ideal $I_{n}(B) \subseteq \mathfrak{p}_{\chi}$.
\item Each minor of $U$ of the form $U_{\Omega}$ with $\chi \subseteq \Omega$ is zero.
\end{enumerate}
In particular, in the case $|\chi| = r$ this gives that $I_{n}(B) \subseteq \mathfrak{p}_{\chi}$ if and only if $U_{\chi}=0$.
 \end{lem}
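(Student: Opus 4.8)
The plan is to reduce the statement to the explicit description of $I_n(B)$ coming from \cref{minorsJacDual} together with \cref{thetacontainingn}: the ideal $I_n(B)$ is generated by all polynomials $m_\Theta$ with $\Theta\subseteq\{1,\dots,t\}$ and $|\Theta|=r-1$ (the hypothesis $n\notin\Theta$ can be dropped thanks to \cref{thetacontainingn}). The key structural fact I would isolate first is that, by \cref{minorsB}, writing $\{1,\dots,t\}\setminus\Theta=\{k_1,\dots,k_{n+1}\}$, the polynomial $m_\Theta$ is a $K$-linear combination of the pairwise distinct squarefree monomials $T_{k_1}\cdots T_{k_{n+1}}/T_{k_i}$ for $i=1,\dots,n+1$, the coefficient of the $i$-th one being $\pm U_{\Theta\cup\{k_i\}}$, and that the set of variables \emph{missing} from $T_{k_1}\cdots T_{k_{n+1}}/T_{k_i}$ is exactly $\{T_l : l\in\Theta\cup\{k_i\}\}$. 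Since $\mathfrak p_\chi$ is a monomial prime, a polynomial lies in $\mathfrak p_\chi$ iff each monomial occurring in it with nonzero coefficient is divisible by some $T_l$, $l\in\chi$; combining this with the previous remark, $m_\Theta\in\mathfrak p_\chi$ iff for every $i$ with $\chi\subseteq\Theta\cup\{k_i\}$ one has $U_{\Theta\cup\{k_i\}}=0$. (No sign, no cancellation among the monomials, so the term-by-term analysis is legitimate.)

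With this in hand, $(2)\Rightarrow(1)$ is immediate: fix a generator $m_\Theta$ and an index $i$; if $\chi\not\subseteq\Theta\cup\{k_i\}$, then some $T_l$ with $l\in\chi$ divides $T_{k_1}\cdots T_{k_{n+1}}/T_{k_i}$ and that term already lies in $\mathfrak p_\chi$, whereas if $\chi\subseteq\Theta\cup\{k_i\}$ then $\Omega:=\Theta\cup\{k_i\}$ is a subset of size $r$ containing $\chi$, so $U_\Omega=0$ by (2) and the term vanishes. Hence $m_\Theta\in\mathfrak p_\chi$ for every generator, i.e. $I_n(B)\subseteq\mathfrak p_\chi$.

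For $(1)\Rightarrow(2)$, let $\Omega\subseteq\{1,\dots,t\}$ with $|\Omega|=r$ and $\chi\subseteq\Omega$; pick any $k\in\Omega$ and set $\Theta:=\Omega\setminus\{k\}$, so $|\Theta|=r-1$ and $k$ is one of the indices $k_i$. Then $\chi\subseteq\Omega=\Theta\cup\{k_i\}$, so the monomial $T_{k_1}\cdots T_{k_{n+1}}/T_{k_i}$ is divisible by no $T_l$ with $l\in\chi$. Since $m_\Theta\in I_n(B)\subseteq\mathfrak p_\chi$ forces every monomial of $m_\Theta$ into $\mathfrak p_\chi$, the coefficient $U_{\Theta\cup\{k_i\}}=U_\Omega$ of that monomial must vanish. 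As $\Omega$ was arbitrary, (2) follows; the ``in particular'' case is just the observation that $|\chi|=r$ forces $\Omega=\chi$, so the only condition is $U_\chi=0$.

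The argument is short; the only real point of care — and the closest thing to an obstacle — is the combinatorial bookkeeping in the first paragraph: correctly identifying which $T$-variables appear in each monomial of $m_\Theta$, checking that these $n+1$ monomials are distinct so that monomial-ideal membership applies term by term, and matching the index set $\Theta\cup\{k_i\}$ (of size $r$) against the hypothesis in (2), which quantifies exactly over subsets of $\{1,\dots,t\}$ of size $r$ containing $\chi$. I would also explicitly flag the use of \cref{thetacontainingn} so that $m_\Theta\in I_n(B)$ for \emph{all} $\Theta$ of size $r-1$, which is what makes the choice $\Theta=\Omega\setminus\{k\}$ legitimate in the $(1)\Rightarrow(2)$ direction.
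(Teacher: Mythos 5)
Your proof is correct and follows essentially the same route as the paper: both arguments test each generator $m_\Theta$ monomial by monomial against the monomial prime $\mathfrak p_\chi$, using that the $i$-th term omits exactly the variables indexed by $\Theta\cup\{k_i\}$ and carries coefficient $\pm U_{\Theta\cup\{k_i\}}$. Your unified criterion simply packages the paper's three-case analysis (two, one, or zero elements of $\chi$ outside $\Theta$) into a single statement, and your explicit appeal to \cref{thetacontainingn} is a harmless (in fact avoidable, by choosing $k=n$ when $n\in\Omega$) way to justify the choice $\Theta=\Omega\setminus\{k\}$.
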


\begin{proof}
By \cref{minorsJacDual}, the ideal $I_n(B)$ is generated by the polynomials $m_{\Theta}$ for all the sets of indexes $\Theta$ such that $|\Theta| = r-1, n \not \in \Theta$. If there exists at least two indexes $k_i, k_j \in \chi \setminus \Theta$, then each monomial of $m_{\Theta}$ is divisible either by $T_{k_i}$ or by $T_{k_j}$ or by both and therefore $m_{\Theta} \in \mathfrak{p}_{\chi}$.

Hence, we need to discuss only the case $\chi \subseteq \Theta \cup \lbrace k_i \rbrace$ for some $k_i \not \in \Theta$. If $k_i \in \chi$, all the monomials of $m_{\Theta}$ except one are divisible by $T_{k_i}$ but none of them is divisible by any other variable $T_{k_j}$ generating $\mathfrak{p}_{\chi}$. The only monomial of $m_{\Theta}$ not divisible by $T_{k_i}$ has coefficient $U_{\Theta \cup \lbrace k_i \rbrace}$. Hence $m_{\Theta} \in \mathfrak{p}_{\chi}$ if and only if $U_{\Theta \cup \lbrace k_i \rbrace}=0$.

If instead we assume $\chi \subseteq \Theta$, we have that none of the monomials of $m_{\Theta}$ is in $\mathfrak{p}_{\chi}$ and the only possibility to have $m_{\Theta} \in \mathfrak{p}_{\chi}$ is to have $U_{\Theta \cup \lbrace j \rbrace}=0$ for every $j \not \in \Theta$. The thesis follows since this must hold for every $\Theta$.
\end{proof}

 To describe the primary decomposition of $I_n(B)$, in order to avoid too much technicality, we focus on the case when a suitable condition on the matrix $U$ (but weaker than the $G_n$ condition) is satisfied.

\begin{remark}
\label{assumption} \hypertarget{assumption}{}
    By \cref{vanishingofmtheta} and \cref{monomialprimes} the following are equivalent:
\begin{enumerate}[\rm(1)]
\item $m_{\Theta} \neq 0$ for every $\Theta$.
\item For every $h \geq 1$, every submatrix of size $h \times (h+1)$ of $U$ has maximal rank.
\item $I_n(B)$ is not contained in any monomial prime ideal of height $< r$.
\end{enumerate}
Notice that if $r=1$, these equivalent conditions are always satisfied, while if $r=2$ this are satisfied if and only if no row of $U$ is zero. \cref{zerorowlemma} shows that, to study the primary decomposition of $I_n(B)$, we can always reduce to assume that the matrix $U$ has no zero rows. Thus if $r \leq 2$ our proof covers all the possible cases.
\end{remark}

We now need a couple of lemmas to show that, for some distinct sets $\Theta$, the corresponding $h_{\Theta}$ are associated in the polynomial ring $K[T_1,\ldots, T_t]$. First we explore further the relation between the $h_{\Theta}$'s and dependencies pointed out in \cref{orlikterao}.

\begin{lem}
 \label{hthetadependency} \hypertarget{hthetadependency}{}
 Let $\Theta$, $m_{\Theta}$, $h_{\Theta}$ be defined as in \cref{minorsB} and \cref{irreduciblefactor}. If $h_{\Theta} \neq 0$, then, up to multiplying by a factor in $K$, there exists a unique dependency $D: a_1 L_1 + \ldots + a_r L_r + b_1 x_1 + \ldots + b_n x_n = 0$ such that $b_j=0$ for $j \in \Theta$, $j \leq n$ and $a_{n-j}=0$ for $j \in \Theta$, $j > n$. In particular $h_{\Theta} = \partial D$.
\end{lem}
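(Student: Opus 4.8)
The plan is to reuse the computation from the first part of the proof of \cref{orlikterao} to produce such a dependency $D$ together with the identity $h_\Theta=\partial D$, and then to deduce uniqueness from a dimension count on the space of linear relations among the forms $F_{k_i}$ indexed by the complement of $\Theta$.

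\textbf{Existence.} Write $\{1,\dots,t\}\setminus\Theta=\{k_1<\dots<k_{n+1}\}$ and, as in \cref{irreduciblefactor}, relabel so that $U_{\Theta\cup\{k_i\}}\neq 0$ exactly for $i\le e$, with $e\ge 2$ since $h_\Theta\neq 0$ lies in the prime ideal $\mathcal{J}$ and hence is not a monomial. By \cref{htheta},
\[ h_\Theta=\sum_{i=1}^{e}(-1)^{\alpha(\Theta,k_i)}\,U_{\Theta\cup\{k_i\}}\,\frac{T_{k_1}\cdots T_{k_e}}{T_{k_i}}. \]
Since $h_\Theta\in\mathcal{J}=\ker\varphi$ (see the discussion after \cref{idealP}) and $\varphi(T_m)=(G/F_m)t$ with $G=\prod_{l=1}^t F_l$, applying $\varphi$ and factoring out the common element $G^{\,e-2}\big(\prod_{m\notin\{k_1,\dots,k_e\}}F_m\big)t^{\,e-1}$ yields, exactly as in the proof of \cref{orlikterao}, the relation $\sum_{i=1}^e(-1)^{\alpha(\Theta,k_i)}U_{\Theta\cup\{k_i\}}F_{k_i}=0$. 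Declaring the coefficient of $F_m$ to be $0$ for $m\in\Theta$, this is a dependency $D$ among the elements of $\mathcal{F}$ in the sense of \cref{dependency}. Because $F_m=x_m$ for $m\le n$ and $F_{n+l}=L_l$, the coefficient $b_m$ of $x_m$ vanishes when $m\in\Theta$ and $m\le n$, and the coefficient $a_l$ of $L_l$ vanishes when $n+l\in\Theta$, i.e. $a_{j-n}=0$ for $j\in\Theta$, $j>n$. Comparing the displayed formula for $h_\Theta$ with \cref{deltadependency} (where $D$ is understood to be written using only its nonzero coefficients) gives $h_\Theta=\partial D$.

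\textbf{Uniqueness.} A dependency $D'\colon a'_1L_1+\dots+a'_rL_r+b'_1x_1+\dots+b'_nx_n=0$ satisfying $b'_m=0$ for $m\in\Theta$, $m\le n$, and $a'_{j-n}=0$ for $j\in\Theta$, $j>n$, is precisely a linear relation among the $n+1$ forms $F_{k_1},\dots,F_{k_{n+1}}$. These are $n+1$ linear forms in $n$ variables, so the relations form a space of dimension $(n+1)-\operatorname{rank}N$, where $N$ is the $n\times(n+1)$ coefficient matrix of the $F_{k_i}$. The columns of $N$ arising from indices $k_i\le n$ are distinct standard basis vectors; passing to the quotient by the corresponding coordinate subspace shows that $\operatorname{rank}N=n$ if and only if the remaining $h+1$ columns (those coming from the $L_j$'s, where $h=|\Theta\cap\{1,\dots,n\}|$) have maximal rank after restriction to the rows indexed by $\Theta\cap\{1,\dots,n\}$ — that is, if and only if the $h\times(h+1)$ submatrix of $U$ appearing in \cref{vanishingofmtheta} has rank $h$. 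By \cref{vanishingofmtheta} this condition is equivalent to $m_\Theta\neq 0$, which holds since $h_\Theta\neq 0$. Hence $\operatorname{rank}N=n$, the relation space is one–dimensional, and $D$ is unique up to a nonzero scalar in $K$; in particular $h_\Theta=\partial D$ for that $D$.

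\textbf{Main obstacle.} The only point requiring genuine care is the linear-algebra identification in the uniqueness step: matching the Schur-complement-type submatrix of $N$ obtained after eliminating the standard-basis columns with the matrix $M$ of \cref{vanishingofmtheta}, keeping exact track of which rows and columns of $U$ are retained. The rest is a direct transcription of manipulations already carried out in the proof of \cref{orlikterao}.
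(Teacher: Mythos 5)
Your proof is correct and follows essentially the same route as the paper's: both reduce uniqueness to the maximal-rank condition on the $h\times(h+1)$ submatrix of $U$ supplied by \cref{vanishingofmtheta} (equivalent to $m_\Theta\neq 0$), and both obtain $h_\Theta=\partial D$ from the computation already carried out in the proof of \cref{orlikterao}. The only difference is organizational: the paper inducts on $r$ to reduce to the case $\Theta\subseteq\{1,\dots,n\}$ and then solves the resulting $(r-1)\times r$ linear system, whereas you run the dimension count on the full $n\times(n+1)$ coefficient matrix $N$ of the forms $F_{k_i}$ directly, handling arbitrary $\Theta$ in one step.
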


\begin{proof}
 The case $r=1$ is clear. Applying the same inductive argument on $r$ as in the proof of \cref{orlikterao}, we reduce to prove the statement in the case $\Theta = \lbrace j_1, \ldots, j_{r-1} \rbrace \subseteq \lbrace 1, \ldots, n \rbrace$. Since $h_{\Theta} \neq 0$, also $m_{\Theta} \neq 0$ and by \cref{vanishingofmtheta} this implies that the rows $j_1, \ldots, j_{r-1}$ of the matrix $U$ are linearly independent. Again, as in the proof of \cref{orlikterao}, this condition implies that, up to multiplying a scalar, there exists a unique dependency $D$ such that $b_{j_1}, \ldots, b_{j_{r-1}}=0$. 
Necessarily $h_{\Theta} = \partial D$.
\end{proof}

\begin{lem}
\label{associated} \hypertarget{associated}{}
Let $\Theta$, $m_{\Theta}$, $h_{\Theta}$ be defined as in \cref{minorsB} and \cref{irreduciblefactor}. Suppose that $ h_{\Theta} \neq m_{\Theta} \neq 0 $. 
Given $j \in \Theta$ and $k \not \in \Theta$ such that the minor $U_{\Theta \cup \lbrace k \rbrace} = 0$, define $\Theta':= \Theta \setminus \lbrace j \rbrace \cup \lbrace k \rbrace$.
Then, if $m_{\Theta'} \neq 0$, there exists a unit $a \in K$ such that $h_{\Theta} = a h_{\Theta'}.$ 
\end{lem}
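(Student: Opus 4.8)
The strategy is to realize $h_\Theta$ and $h_{\Theta'}$ as the polynomials $\partial D$ and $\partial D'$ attached to linear dependencies among the elements of $\mathcal{F}$, as in \cref{hthetadependency}, and then to argue that $D$ and $D'$ are the same dependency up to a nonzero scalar.

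First I would recall, from the proof of \cref{irreduciblefactor}, the shape of $h_\Theta$. Writing $k_1<\cdots<k_{n+1}$ for the indices not lying in $\Theta$, reorder them so that $U_{\Theta\cup\{k_i\}}\neq 0$ exactly for $i\le e$; the hypotheses $m_\Theta\neq 0$ and $h_\Theta\neq m_\Theta$ force $2\le e\le n$, so in particular $h_\Theta\neq 0$ and the given index $k$ with $U_{\Theta\cup\{k\}}=0$ is one of $k_{e+1},\dots,k_{n+1}$. Exactly as in the proof of \cref{orlikterao}, applying the Rees map to the formula \cref{htheta} for $h_\Theta$ and cancelling a common power of $\prod_{i=1}^t F_i$ exhibits the dependency
$$ D\colon\ \sum_{i=1}^e (-1)^{\alpha(\Theta,k_i)}\,U_{\Theta\cup\{k_i\}}\,F_{k_i}=0, \qquad h_\Theta=\partial D. $$
The key point is that $D$ involves only the forms $F_{k_1},\dots,F_{k_e}$, so its coefficient at any index outside $\{k_1,\dots,k_e\}$ vanishes; in particular $D$ vanishes at every index of $\Theta$ and also at $k$ (since $U_{\Theta\cup\{k\}}=0$), hence at every index of $\Theta'=(\Theta\setminus\{j\})\cup\{k\}\subseteq\Theta\cup\{k\}$.

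Since $m_{\Theta'}\neq 0$ we have $h_{\Theta'}\neq 0$, so \cref{hthetadependency} supplies a dependency $D'$, unique up to a unit of $K$, with $h_{\Theta'}=\partial D'$ and with vanishing coefficients at all the indices of $\Theta'$. By the previous step, $D$ is a nonzero dependency with precisely this vanishing property, so uniqueness in \cref{hthetadependency} gives $D'=a'D$ for some $a'\in K\setminus\{0\}$. Since the operator $D\mapsto\partial D$ of \cref{deltadependency} is $K$-linear, this yields $h_{\Theta'}=\partial D'=a'\,\partial D=a'\,h_\Theta$, i.e. $h_\Theta=a\,h_{\Theta'}$ with $a=(a')^{-1}$, as claimed.

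The only genuinely delicate point is the bookkeeping behind the assertion that the dependency $D$ attached to $h_\Theta$ has a zero coefficient at the index $k$: this rests on the explicit description of $D$ extracted from the proof of \cref{orlikterao}, namely that the coefficient of $F_k$ in $D$ equals, up to sign, the minor $U_{\Theta\cup\{k\}}$, which vanishes by hypothesis. Everything else is a direct application of \cref{hthetadependency} together with the linearity of $D\mapsto\partial D$.
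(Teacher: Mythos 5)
Your proof is correct and follows essentially the same route as the paper: both arguments reduce to \cref{hthetadependency}, observing that the vanishing of $U_{\Theta\cup\{k\}}$ forces the coefficient of $F_k$ in the dependency $D$ attached to $h_{\Theta}$ to vanish, so that $D$ already satisfies the vanishing conditions characterizing the dependency attached to $\Theta'$, and uniqueness up to a scalar finishes the argument. The only cosmetic caveat is that $D\mapsto\partial D$ is not literally $K$-linear (the polynomial $\partial D$ depends on the support of nonzero coefficients), but since $D'=a'D$ have the same support, the scaling identity $\partial(a'D)=a'\,\partial D$ that you actually use is valid.
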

\begin{proof}
Let $\Theta = \lbrace j_1, \ldots, j_{r-1} \rbrace$ and
let $\lbrace k_1, \ldots, k_{n+1} \rbrace = \lbrace 1, \ldots, t \rbrace \setminus \Theta$. Since $0 \neq m_{\Theta} \neq h_{\Theta} $ by reordering the indexes, there exists $1 \leq e < n $ such that $U_{\Theta \cup \lbrace k_l \rbrace} = 0$ for $l \leq e$ and $U_{\Theta \cup \lbrace k_l \rbrace} \neq 0$ for $l > e$. 
Take $k \in \lbrace k_1, \ldots, k_{e} \rbrace$ and $j \in \lbrace j_1, \ldots, j_{r-1} \rbrace$.
Observe that $ U_{\Theta' \cup \lbrace j \rbrace} = U_{\Theta \cup \lbrace k \rbrace} = 0$.
By definition of $m_{\Theta}$ we can write $$ \frac{m_{\Theta}}{T_{k}} = \sum_{l =1}^{n+1} (-1)^{\alpha(\Theta,l)}\left( \frac{T_{k_1}\cdots T_{k_{n+1}}}{T_{k_l} T_{k}} \right) U_{\Theta \cup \lbrace k_l \rbrace}, $$ $$ \frac{m_{\Theta'}}{T_{j}} = \sum_{l =1}^{n+1} (-1)^{\alpha(\Theta',l)}\left( \frac{T_{k_1}\cdots T_{k_{n+1}}}{T_{k_l} T_{k}} \right) U_{\Theta' \cup \lbrace k_l \rbrace}.  $$
Hence, to conclude we have to show that there exists a unit $a \in K$ such that for every $l \in \lbrace k_1,\ldots, k_{n+1} \rbrace \setminus \lbrace k \rbrace$,
 \begin{equation}
   \label{signformula} \hypertarget{signformula}{}
      U_{\Theta' \cup \lbrace k_l \rbrace}  = (-1)^{\alpha(\Theta,k_l)-\alpha(\Theta',k_l)} a U_{\Theta \cup \lbrace k_l \rbrace}.
 \end{equation}

This equality will also imply that one of these two minors is zero if and only if so is the other one and the thesis will then follow by \cref{irreduciblefactor}. 

Using \cref{hthetadependency}, it is sufficient to show that $h_{\Theta}$ and $h_{\Theta'}$ correspond to the same dependency $D$. Let $D: a_1 L_1 + \ldots + a_r L_r + b_1 x_1 + \ldots + b_n x_n = 0$ be the dependency such that $h_{\Theta}= \partial D$.
For simplicity set $c_l\coloneq b_l$ for $l \leq n$ and $c_l \coloneq a_{n-l}$ for $l > n$.
Hence $c_{j_l}= 0$ for every $l=1, \ldots, r-1$ and moreover, since $U_{\Theta \cup \lbrace k \rbrace} = 0$ then also $c_k=0$. Since also $m_{\Theta'} \neq 0$, then $h_{\Theta'} \neq 0$ and by \cref{hthetadependency}, the dependency $D'$ associated to $h_{\Theta'}$ is then equal to $aD$ for some nonzero $a \in K$. It follows that $h_{\Theta'}=ah_{\Theta}$.
\end{proof}

Next theorem shows that, under the assumption that all the $m_{\Theta}$ are nonzero (which by \cref{assumption} corresponds to a maximal rank condition on certain submatrices of $U$), the ideal $I_n(B)$ can be obtained intersecting $\mathcal{P}$ with all the monomial primes of height at most $r$ containing $I_n(B)$. Following the notation of \cref{monomialprimes}, define $\Lambda $ to be the set of all the monomial prime ideals of $K[T_1, \ldots, T_t]$ of the form $\mathfrak{p}_{\chi}$ for $\chi \subseteq \lbrace 1, \ldots, t \rbrace$, $|\chi| \leq r$ and such that $I_{n}(B) \subseteq \mathfrak{p}_{\chi}$.

  
\begin{thm} \label{intersection} \hypertarget{intersection}{}
Consider the set $\Lambda$ defined above and call $Q= \bigcap_{\mathfrak{p} \in \Lambda} \mathfrak{p}.$ Let $\mathcal{P} $ be defined as in \cref{idealP}. Suppose that $m_{\Theta} \neq 0$ for every $\Theta$.
Then  $I_n(B)$ is radical and its primary decomposition is \begin{equation}
    \label{primary} \hypertarget{primary}{}
     I_n(B)= Q \cap \mathcal{P}.
\end{equation}
\end{thm}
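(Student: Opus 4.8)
The plan is to prove the two inclusions of \cref{primary} separately, and then deduce that the displayed intersection is actually a primary (indeed prime) decomposition. The inclusion $I_n(B) \subseteq Q \cap \mathcal{P}$ is the easy half: by \cref{orlikterao} (or directly by \cref{minorsJacDual} and \cref{irreduciblefactor}) each generator $m_{\Theta}$ of $I_n(B)$ factors as $m_{\Theta} = f_{\Theta}\, h_{\Theta}$ with $f_{\Theta}$ a squarefree monomial (or a unit) and $h_\Theta \in \mathcal{P}$, so $I_n(B) \subseteq \mathcal{P}$; and $I_n(B) \subseteq Q$ holds by the very definition of $\Lambda$, since $Q$ is the intersection of all the monomial primes of height $\le r$ that contain $I_n(B)$. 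The content is therefore entirely in the reverse inclusion $Q \cap \mathcal{P} \subseteq I_n(B)$.

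For the hard inclusion, I would argue as follows. Fix a polynomial $g \in Q \cap \mathcal{P}$. Using the explicit generators $h_\Theta$ of $\mathcal{P}$ from \cref{idealP} and \cref{irreduciblefactor}, together with the relation $m_\Theta = f_\Theta h_\Theta$, it suffices to show that for each $\Theta$ (with $n \notin \Theta$, so that $h_\Theta$ is a \emph{minimal} generator of $\mathcal{P}$) the product $f_\Theta h_\Theta = m_\Theta$ lies in $I_n(B)$ — which it does trivially — and, more importantly, to control how the \emph{extra monomial factors} $f_\Theta$ interact with membership in $Q$. The key point is the following: the monomial $f_\Theta = T_{k_{e+1}}\cdots T_{k_{n+1}}$ is exactly the product of those variables $T_{k_i}$ for which $U_{\Theta \cup \{k_i\}} = 0$; by \cref{monomialprimes} every prime $\mathfrak p_\chi \in \Lambda$ contains $I_n(B)$ precisely when all the relevant $U_\Omega$ vanish, and the hypothesis $m_\Theta \ne 0$ for every $\Theta$ (equivalently, by \cref{assumption}, that $I_n(B)$ sits in no monomial prime of height $< r$) forces every $\mathfrak p_\chi \in \Lambda$ to have height exactly $r$ and to be of the form $\mathfrak p_\chi$ with $U_\chi = 0$. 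I would then run an induction on $r$, mirroring the inductive scheme already used in \cref{orlikterao} and \cref{minorsJacDual}: deleting one of the last $r$ columns reduces $r$ by one, and \cref{associated} identifies which $h_\Theta$ and $h_{\Theta'}$ become associates when a minor $U_{\Theta\cup\{k\}}$ vanishes, letting us assemble a preimage of $g$ in $I_n(B)$ from the generators $m_\Theta$ by multiplying each $h_\Theta$ back up by its monomial $f_\Theta$ and checking the monomial bookkeeping is consistent with $g \in Q$.

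Once both inclusions are established, I would finish by observing that the right-hand side of \cref{primary} is a genuine primary (in fact prime) decomposition. The ideal $\mathcal{P}$ is prime: by \cref{orlikterao} it is the non-linear part of the defining ideal $\mathcal{J}$ of $\mathcal{R}(I_{2,\mathcal F})$, and since $\mathcal{J}$ is prime and $\mathcal{P}$ consists exactly of those generators of $\mathcal{J}$ lying in $K[T_1,\ldots,T_t]$, one checks $\mathcal{P} = \mathcal{J} \cap K[T_1,\ldots,T_t]$ is prime. Each $\mathfrak p_\chi \in \Lambda$ is a monomial prime by construction. Hence $Q \cap \mathcal{P}$ is an intersection of primes, so it is radical, and $I_n(B) = Q \cap \mathcal{P}$ is radical with the stated primary decomposition (after discarding any redundant component, i.e. any $\mathfrak p_\chi$ containing $\mathcal P$ or another $\mathfrak p_{\chi'}$); this also recovers $\het I_n(B) = \min\{r, \het \mathcal P\}$ as in \cref{eagon-northcott}. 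I expect the main obstacle to be the reverse inclusion $Q \cap \mathcal{P} \subseteq I_n(B)$, specifically the combinatorial bookkeeping that matches, for a given $g \in Q \cap \mathcal P$, the monomial factors $f_\Theta$ needed to lift each $h_\Theta$ back to a generator $m_\Theta$ with the vanishing conditions on the minors of $U$ that define membership in $Q$; \cref{monomialprimes}, \cref{vanishingofmtheta}, \cref{associated} and the inductive reduction on $r$ are the tools I would lean on to push this through.
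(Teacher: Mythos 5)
Your easy inclusion $I_n(B) \subseteq Q \cap \mathcal{P}$ is fine and matches the paper. The problem is that the reverse inclusion $Q \cap \mathcal{P} \subseteq I_n(B)$ --- which you yourself identify as the entire content of the theorem --- is never actually argued: you describe an intention (``run an induction on $r$ \dots\ assemble a preimage of $g$ \dots\ check the monomial bookkeeping is consistent'') without supplying the mechanism that makes the bookkeeping work. Concretely, given $g = \sum_i \alpha_i h_{\Theta_i} \in Q \cap \mathcal{P}$, you must show that each coefficient $\alpha_i$ (for those $\Theta_i$ with $h_{\Theta_i} \neq m_{\Theta_i}$) is forced into a monomial ideal that multiplies $h_{\Theta_i}$ back into $I_n(B)$. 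The paper's proof does this in two steps that are absent from your sketch: (a) since every monomial prime $\mathfrak{p}_\chi \in \Lambda$ not containing $h_{\Theta_i}$ must contain $\alpha_i$, one gets $\alpha_i \in Q_{\Theta_i} := \bigcap_{\mathfrak{p}_\chi \in \Lambda_i}\mathfrak{p}_\chi$; and (b) this intersection is identified with (a subideal of) the star configuration ideal $H = I_{r,\mathcal{E}}$ of height $r$ on the variables $\mathcal{E} = \{T_i : i \in \Theta \cup \{k_l : U_{\Theta\cup\{k_l\}}=0\}\}$, whose minimal generators $\beta_\Omega$ satisfy $\beta_\Omega h_\Theta = u\, m_\Omega \in I_n(B)$ for a unit $u$, by \cref{associated} applied iteratively (using $m_\Omega \neq 0$ to guarantee $h_\Theta$ and $h_\Omega$ are associates). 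Without identifying this auxiliary star configuration ideal $H$ and proving both $Q_{\Theta} \subseteq H$ and $H h_\Theta \subseteq I_n(B)$, the ``monomial bookkeeping'' has no concrete content, and your proposed induction on $r$ by deleting columns does not obviously help: deleting a column changes $\Lambda$ and $Q$, and there is no stated relation between $Q \cap \mathcal{P}$ for the full arrangement and for the smaller one.

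A secondary point: your closing remark that one should ``discard any redundant component'' is not needed under the hypothesis $m_\Theta \neq 0$ for all $\Theta$, which by \cref{assumption} already forces every prime in $\Lambda$ to have height exactly $r$; but this is cosmetic compared to the missing core of the reverse inclusion.
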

  
  \begin{proof}
 By \cref{garrousian-simis-tohaneanu} and \cref{orlikterao}, the ideal $\mathcal{P}$ is the defining ideal of the fiber cone of $I_{2, \mathcal{F}}$, hence it is prime of height $r$.
The inclusion $I_n(B) \subseteq Q \cap \mathcal{P}$ follows by \cref{monomialprimes} and \cref{irreduciblefactor}.
For the other inclusion consider an element $\alpha = \sum \alpha_i h_{\Theta_i} \in Q \cap \mathcal{P}$. Since each $m_{\Theta} \in I_n(B)$, we can restrict to consider only the case in which $h_{\Theta_i} \neq m_{\Theta_i}$ for every $i$. By \cref{irreduciblefactor}, this implies that some minor of $U$ of the form $U_{\Theta_i \cup \lbrace k \rbrace} = 0$. Hence, setting $\chi= \Theta_i \cup \lbrace k \rbrace$, by \cref{monomialprimes} $\mathfrak{p}_{\chi} \in \Lambda$ and therefore $Q \subseteq \mathfrak{p}_{\chi}$. 
But now, none of the monomials of $h_{\Theta_i}$ is in $\mathfrak{p}_{\chi}$. Since $\alpha \in Q$ and $\mathfrak{p}_{\chi}$ is generated by variables, this clearly implies $\alpha_i \in \mathfrak{p}_{\chi}$. In particular, for every $i$, we get $$\alpha_i \in Q_{\Theta_i}:=\bigcap_{\scriptstyle \mathfrak{p}_{\chi} \in \Lambda_i} \mathfrak{p}_{\chi},$$ where $\Lambda_i$ is the set of primes in $\Lambda$ not containing $h_{\Theta_i}.$
Thus we can reduce to fix one set of indexes $\Theta$ such that $m_{\Theta} \neq  h_{\Theta} $, and show that $Q_{\Theta}h_{\Theta} \subseteq I_n(B)$. This would imply that $\alpha \in I_n(B)$ and the proof would be complete. 


 Call $k_1, \ldots, k_{n+1}$ the indexes not belonging to $\Theta$ and, by reordering, consider the positive integer $e$ such that $1 \leq e < n $ and $\,U_{\Theta \cup \lbrace k_l \rbrace} = 0$ if and only if $l \leq e$. Suppose $\Theta= \lbrace j_1, \ldots, j_{r-1} \rbrace$.
 Consider the sets $E= \Theta \cup \lbrace k_{1}, \ldots, k_{e} \rbrace $ and $\mathcal{E}= \lbrace T_i \mbox{ : } i \in E \rbrace$. Let $H:= I_{r,\mathcal{E}} \in K[T_1, \ldots, T_t]$ be the ideal of the star configuration of height $r$ on the set $\mathcal{E}$.
 
 We first prove that any associated prime $\mathfrak{p}$ of $H$ is in the set $\Lambda$ and $h_{\Theta} \not \in \mathfrak{p}$.
 This will imply that $Q_{\Theta} \subseteq H$ and we later prove that $H h_{\Theta} \subseteq I_n(B)$.
 
 By \cref{monomialprimes}, our first statement is true if we show that for every subset $\chi \subseteq E$ such that $|\chi|= r$, the minor $U_{\chi}= 0$ and $h_{\Theta} \not \in \mathfrak{p}_{\chi}$. This is true by assumption for the subsets $\chi$ containing $\Theta$.
 For the others sets, this can be done applying \cref{associated}, 
 inductively on the cardinality of $\chi \cap \lbrace k_{1}, \ldots, k_{e} \rbrace$. The basis of the induction is the case when $\Theta \subseteq \chi$.
 In the case $|\chi \cap \lbrace k_{1}, \ldots, k_{e} \rbrace|=2$, we get $\chi= \Theta' \cup \lbrace k_l \rbrace $ for some $\Theta' = \Theta \setminus \lbrace j \rbrace \cup \lbrace k_i \rbrace$ of the form considered in \cref{associated}. 
 Since $m_{\Theta'} \neq 0$, as a consequence of \cref{associated}, 
 we get that $h_{\Theta}$ and $h_{\Theta'}$ are associated the polynomial ring $K[T_1, \ldots, T_t]$ and therefore $U_{\chi}= 0$, since also $U_{\Theta \cup \lbrace k_l \rbrace}=0$. Moreover, by definitions $h_{\Theta'} \not \in \mathfrak{p}_{\chi}$, and thus also $h_{\Theta} \not \in \mathfrak{p}_{\chi}$.
 An iterative application of  \cref{associated}, 
using the fact that all the $m_{\Theta}$'s are nonzero, allows to deal in the same way with all the other cases. 

We now prove that $H h_{\Theta} \subseteq I_n(B)$. The minimal generators of $H$ have degree $e$ and are of the form $\beta_{\Omega}= \prod_{i \in E \setminus \Omega} T_i$ where  $\Omega \subseteq E$ and $|\Omega|= r-1$. For every of such sets $\Omega$, since $m_{\Omega} \neq 0$, as consequence of \cref{irreduciblefactor} and of \cref{associated},  we get $m_{\Omega}= \beta_{\Omega} h_{\Omega}$. Moreover, $h_{\Theta}$ and $h_{\Omega}$ are associated in $K[T_1, \ldots, T_t]$. Hence, for some unit $u \in K$, we have $ \beta_{\Omega} h_{\Theta}= \beta_{\Omega} uh_{\Omega} = u m_{\Omega} \in I_n(B) $.This implies $H h_{\Theta} \subseteq I_n(B)$ and concludes the proof. 
\end{proof}

  \section*{Acknowledgements}
  
  We thank Paolo Mantero for interesting conversations on ideals of star configurations that partly motivated this project, and Kuei-Nuan Lin for referring us to the work of Garrousian, Simis and Tohaneanu \cite{GSimisT}. We also are grateful to Alexandra Seceleanu for helpful feedback on a preliminary version of this preprint. 
  The third author is supported by the NAWA Foundation grant Powroty "Applications of Lie algebras to Commutative Algebra".

\end{document}